\tikzset{shorten <>/.style={shorten >=#1,shorten <=#1}}
\DeclareRobustCommand{\SkipTocEntry}[9]{}
\theoremstyle{definition}
\newtheorem{definition}{Definition}[section]
\newtheorem{proposition}[definition]{Proposition}
\newtheorem{lemma}[definition]{Lemma}
\newtheorem{theorem}[definition]{Theorem}
\newtheorem*{theorem*}{Theorem}
\newtheorem*{definition*}{Definition}
\newtheorem*{example*}{Example}
\newtheorem{corollary}[definition]{Corollary}
\newtheorem{example}[definition]{Example}
\newtheorem{remark}[definition]{Remark}
\newtheorem*{rep@theorem}{\rep@title}
\newcommand{\newreptheorem}[2]{%
\newenvironment{rep#1}[1]{%
 \def\rep@title{#2 \ref{##1}}%
 \begin{rep@theorem}}%
 {\end{rep@theorem}}}
\newtheorem*{rep@definition}{\rep@title}
\newcommand{\newrepdefinition}[2]{%
\newenvironment{rep#1}[1]{%
 \def\rep@title{#2 \ref{##1}}%
 \begin{rep@definition}}%
 {\end{rep@definition}}}
\newtheorem*{rep@example}{\rep@title}
\newcommand{\newrepexample}[2]{%
  \newenvironment{rep#1}[2]{%
    \def\rep@title{#2 \ref{##1}--\ref{##2}}%
    \begin{rep@example}%
  }{%
    \end{rep@example}%
  }%
}
\tikzset{curve/.style={settings={#1},to path={(\tikztostart)
			.. controls ($(\tikztostart)!\pv{pos}!(\tikztotarget)!\pv{height}!270:(\tikztotarget)$)
			and ($(\tikztostart)!1-\pv{pos}!(\tikztotarget)!\pv{height}!270:(\tikztotarget)$)
			.. (\tikztotarget)\tikztonodes}},
	settings/.code={\tikzset{quiver/.cd,#1}
		\def\pv##1{\pgfkeysvalueof{/tikz/quiver/##1}}},
	quiver/.cd,pos/.initial=0.35,height/.initial=0}
\tikzset{
  trim node/.default=1cm,
  trim node/.style={
    overlay,
    append after command={
      ([xshift={+#1}]\tikzlastnode.north west)
      ([xshift={+-#1}]\tikzlastnode.south east)}},
  down and trim/.default=1cm,
  down and trim/.style={
    yshift=-(\pgfmatrixcurrentcolumn-1)*1.5\baselineskip,
    trim node={#1}},
  downup and trim/.default=1cm,
  downup and trim/.style={
    yshift=iseven(\pgfmatrixcurrentcolumn) ? -1.5\baselineskip : 0pt,
    trim node={#1}},
  -|/.style={to path={-|(\tikztotarget)\tikztonodes}},
  |-/.style={to path={|-(\tikztotarget)\tikztonodes}},
  -| sl/.style={-|, xslant=-1},
  |- sl/.style={|-, xslant= 1},
  center picture/.style={
    trim left=(current bounding box.center),
    trim right=(current bounding box.center)}}
\tikzset{tail reversed/.code={\pgfsetarrowsstart{tikzcd to}}}
\tikzset{2tail/.code={\pgfsetarrowsstart{Implies[reversed]}}}
\tikzset{2tail reversed/.code={\pgfsetarrowsstart{Implies}}}
\tikzset{no body/.style={/tikz/dash pattern=on 0 off 1mm}}
\tikzset{
	labl/.style={anchor=south, rotate=90, inner sep=.5mm}
}
\newcommand{\Rep}{\text{Rep}}
\renewcommand{\Vec}{\text{Vec}}
\newcommand{\End}{\text{End}}
\newcommand{\Z}{\mathbb{Z}}
\newcommand{\Hom}{\text{Hom}}
\newcommand{\id}{\text{id}}
\newcommand{\1}{\mathds{1}}
\newcommand{\CC}{\mathbb{C}}
\newcommand{\R}{\mathbb{R}}
\newcommand{\Ccal}{\mathcal{C}}
\newcommand{\Dcal}{\mathcal{D}}
\newcommand{\Scal}{\mathcal{S}}
\newcommand{\Lcal}{\mathcal{L}}
\newcommand{\Hilb}{\text{Hilb}}
\newcommand{\MorC}{\mathrm{C}^*\text{Alg}}
\newcommand{\Ecal}{\mathcal{E}}
\newcommand{\Wcat}{\mathrm{W}^*\text{Cat}}
\newcommand{\Corr}{\text{Corr}}
\newcommand{\Tcal}{\mathcal{T}}
\newcommand{\Ycal}{\mathcal{Y}}
\newcommand{\TYcal}{\Tcal\Ycal}
\newcommand{\Fcal}{\mathcal{F}}
\newcommand{\Ffrak}{\mathfrak{F}}
\newcommand{\Bcal}{\mathscr{B}}
\title{Continuous Tambara-Yamagami Tensor Categories}
\author{Adrià Marín-Salvador}
\date{}
\email{marin@maths.ox.ac.uk}
\begin{document}

\begin{abstract}
We introduce continuous tensor categories as algebra objects in the Morita bicategory of $\mathrm{C}^*$-algebras to study ``semisimple'' tensor categories with a topological space of objects and continuous fusion rules and associators. In this setting, we generalize the construction of Tambara-Yamagami tensor categories from finite abelian groups to locally compact abelian groups, and provide a classification of continuous Tambara-Yamagami tensor categories for a locally compact abelian group $G$. A continuous Tambara-Yamagami tensor category associated to a locally compact abelian group $G$ is a continuous tensor category that has a single non-invertible simple object $\tau$ such that $\tau\otimes \tau$ decomposes as a direct integral indexed over $G$, meaning $\tau\otimes\tau  \cong L^2(G)$. We show that continuous Tambara-Yamagami tensor categories for $G$ are classified by a continuous symmetric nondegenerate bicharacter $\chi: G\times G\to U(1)$ and a sign $\xi\in\{\pm 1\}$. We also prove that, if a $\mathrm{W}^*$-tensor category $\Ccal$ obeys the Tambara-Yamagami fusion rules, then its associators are automatically continuous in the sense that $\Ccal$ is obtained from a continuous tensor category by ``forgetting its topology''.
\end{abstract}

\maketitle

\tableofcontents

\section{Introduction}
\addtocontents{toc}{\protect\setcounter{tocdepth}{-1}}

\subsection*{Background and motivation}Fusion (i.e. finite, rigid, semisimple tensor) categories appear in different areas of mathematics, including representation theory \cite{NV}, conformal field theory (CFT) \cite{frsI, frsII, frsIII, frsIV, frsV}, topological quantum field theory \cite{TV, barretwestbury, DSPS}, quantum groups \cite{andersen, andersenparadowski}, or vertex operator algebras \cite{huang}, and they have been extensively studied, in part thanks to their very combinatorial nature. However, in many of these areas, one has to restrict attention to a particular class of objects, like finite groups in representation theory or strongly rational CFTs, in order to obtain categories with the finiteness and semisimplicity conditions imposed in the definition of fusion categories. It is therefore reasonable to aim to work with a larger class of tensor categories which still preserve some of the convenient features of fusion categories but include a wider class of examples which arise naturally. We provide a step towards this generalization.

Our main motivation comes from 2-dimensional unitary CFT. To such a CFT, one can associate a tensor category of quantum symmetries. While for some CFTs the categories of symmetries are fusion, there are many CFTs for which the quantum symmetries form non-trivial topological spaces \cite{categoriosities, fredenhagen}. As an example, the massless boson on a line and on a plane have categories of symmetries whose simple objects are parametrized by $\R$ and $\R^2$ respectively \cite{fredenhagen, RepHeis}. In addition, the orbifold of the latter under the action of $SO(2)$ has simple objects whose tensor product is a direct integral of simple objects. Topological spaces of simple objects and direct integrals also appear in the study of positive representations of the quantum groups $U_q(\mathfrak{sl}_n(\R))$ \cite{Ponsot01, schrader2017continuoustensorcategoriesquantum}. Positive representations of $U_q(\mathfrak{sl}_n(\R))$ are in bijection with points in the Weyl chamber of $\mathfrak{sl}_n(\R)$, and direct integrals with respect to certain measures on the Weyl chamber are needed to describe their tensor product. From a different viewpoint, an interest in generalizations of $\Vec\,G$ (the tensor category of $G$-graded vector spaces) from finite groups to Lie groups, as well as possible definitions and induced TQFTs, can also be found in \cite{Walker}.

The examples above illustrate the need for a mathematical treatment of tensor categories which are still semisimple but which have a topological space of simple objects. However, asking for a topology on the space of simple objects of a tensor category is a rather weak requirement, as one would also want to encode that the fusion rules and associators are ``continuous'' with respect to this topology. The first mathematical approach to categories of this type appeared, as a tool to study Chern-Simons theory, in \cite{FHLT}. Freed, Hopkins, Lurie, and Teleman introduced the categorified group ring $\Vec^\omega T$ of vector spaces graded over a torus $T$, where the associator is twisted by a cohomology class $\omega\in H^4(BT; \mathbb{Z})$. The category $\Vec^\omega T$ is defined as the category of skyscraper sheaves on $T$ with finite support.

The most exhaustive study of tensor categories with a smooth family of simple objects appears in \cite{CW} under the name of manifold tensor categories. The proposed model builds on the skyscraper sheaf approach to define manifold tensor categories as stacks of skyscraper sheaves on the site of smooth manifolds, which are further allowed to be twisted by a gerbe on the manifold of simple objects. Whilst the categories assigned to the point manifold are the categories of \cite{FHLT}, the rest of the stack allows one to keep track of smooth families of finitely many simple objects. Weis also defines smooth analogues of categorical constructions such as algebra objects or the Drinfeld centre, and discusses rigidity in this context. Under this definition, Weis gives a structure of a manifold tensor category to the category $\Vec^\omega G$ for $G$ a Lie group and $\omega\in H^3_{SM}(BG; \mathbb{C}^\times)$ a class in Segal-Mitchison cohomology, as well as to some of the categories appearing in \cite{categoriosities}, and other examples.

The skyscraper sheaf model of \cite{FHLT} and \cite{CW}, however, does not allow for the limit of simple objects to be non-simple (which happens, for example, in the representation theory of dimension drop algebras \cite{MR1680321}), or for direct integrals of objects, as only finite support sheaves are considered. Hence, one cannot discuss the examples of \cite{fredenhagen, Ponsot01, schrader2017continuoustensorcategoriesquantum} where the tensor product of two simple objects yields a direct integral of simple objects. On the other hand, the treatment in \cite{fredenhagen, Ponsot01, schrader2017continuoustensorcategoriesquantum} does not provide a model in which the natural topology on the space of simple objects can be exploited. The present paper introduces a new framework for ``semisimple'' tensor categories with continuously many objects which naturally incorporates the topology on the space of simple objects and also allows for direct integrals and non-simple limits of simple objects.

\subsection*{Continuous tensor categories}
Given a $\mathrm{C}^*$-algebra $A$, its category of representations $\Rep(A)$ has a natural topology, the Fell topology, and one can take direct integrals of representations of $A$. If one only has access to $\Rep(A)$ as opposed to $A$, these two structures are no longer~well defined. We may therefore think of $A$ as a continuous category topologizing its underlying linear category $\Rep(A)$. Given another $\mathrm{C}^*$-algebra $B$, the functors $\Rep(A)\to \Rep(B)$ which are compatible with the topology are those given by tensoring with a $B-A$-correspondence.

\begin{repdefinition}{def: ctstensorcats}
    A continuous tensor category is a (coherently associative) unitary algebra object in the bicategory $\MorC$ of $\mathrm{C}^*$-algebras, $\mathrm{C}^*$-correspondences and intertwiners.
\end{repdefinition}

A continuous tensor category is therefore a $\mathrm{C}^*$-algebra $A$, together with a $\mathrm{C}^*$-correspondence $\Tcal$ between $A$ and $A\otimes A$ encoding the tensor product, and a unitary intertwiner encoding the associator, plus a unit and unitors. The $\mathrm{C}^*$-correspondence $\Tcal$ of a continuous tensor category induces a tensor product on the underlying linear category $\Rep(A)$ of $A$. This assignment extends to a functor $\mathfrak{F}$ from the bicategory of continuous tensor categories into the bicategory of linear tensor categories, which we do not know to be full. Actually, the underlying linear tensor category of a continuous tensor category is a $\mathrm{W}^*$-tensor category, meaning, in particular, that its endomorphism algebras are von Neumann algebras. Let $G$ be a locally compact group. 

\begin{repexample}{ex: HilbG}{Ex: GroupC*Algebra}
    \begin{enumerate}
        \item The tensor category $\Hilb\,G$ of $G$-graded Hilbert spaces is naturally a continuous tensor category. Its underlying continuous category is the $\mathrm{C}^*$-algebra $C_0(G)$ of $\CC$-valued continuous functions on $G$ vanishing at infinity. The tensor $C_0(G)-C_0(G\times G)$-correspondence is obtained by letting $C_0(G)$ act on $C_0(G\times G)$ by pointwise multiplication after pulling back along the operation $m:G\times G\to G$. The simple objects of $\Hilb\,G$ are parametrized by $G$, and they fuse according to $m$.
        \item Given $\omega\in H^3_{SM}(BG;U(1))$ a class in Segal-Mitchison cohomology, we obtain a continuous tensor category $\Hilb^\omega G$ by twisting the tensor product and the associator above~by~$\omega$.
\item The tensor category $\Rep(G)$ of unitary representations of $G$ is naturally a continuous tensor category. Its underlying continuous category is the group $\mathrm{C}^*$-algebra $C^*(G)$, which is naturally a Hopf $\mathrm{C}^*$-algebra. The Hopf structure provides the tensorator.
    \end{enumerate}
\end{repexample}
 
 \subsection*{Continuous Tambara-Yamagami categories} A more involved example of continuous tensor categories are continuous Tambara-Yamagami tensor categories, which we introduce next.

 Before discussing the continuous version, let us recall the definition of finite Tambara-Yamagami fusion categories. In \cite{TY}, Tambara and Yamagami studied those fusion categories which have a unique non-invertible simple object $\tau$ and $\tau\otimes\tau$ is a direct sum of invertible simple objects. These categories are equivalent to $\Vec\,G\oplus \Vec\cdot \tau$ for $G$ a finite abelian group (whose group operation we write additively), with tensor product given by, taking $g,h\in G$,
\begin{equation}\label{eq: TYfusionrules}
g\otimes h \cong g+h \hspace{1cm} g\otimes\tau \cong\tau\otimes g \cong \tau\hspace{1cm} \tau\otimes\tau \cong \bigoplus_{g\in G}g.
\end{equation}
Here, $\Vec\cdot \tau$ is the category isomorphic to $\Vec$ generated by the symbol $\tau$. Fusion categories with the fusion rules \eqref{eq: TYfusionrules} are classified by a symmetric nondegenerate bicharacter $\chi:G\times G\to \CC^\times$ controlling the $\Vec\,G - \Vec\,G$-bimodule structure of $\Vec\cdot \tau$ and a choice of a square root $\xi\in \{\pm\nicefrac{1}{\sqrt{|G|}}\}$ controlling the $(\tau,\tau,\tau)$-associator. The fusion category associated to such a pair $(\chi, \xi)$ is denoted $\TYcal(G, \chi,\xi).$

\begin{theorem*}[\cite{TY}]
    Let $\Ccal$ be a fusion category with a unique non-invertible simple object $\tau$ whose square is a direct sum of all invertible simple objects. Then, $\Ccal$ is equivalent to $\TYcal(G, \chi, \xi)$ for a finite abelian group $G$, a unique symmetric nondegenerate bicharacter $\chi:G\times G\to \CC^\times$ (up to the action of $\text{Aut}(G)$) and a unique choice of a square root $\xi\in \{\pm\nicefrac{1}{\sqrt{|G|}}\}$.
\end{theorem*}

We exploit our definition of continuous tensor categories to generalize the construction of Tambara-Yamagami tensor categories from finite groups to topological groups, and we classify continuous Tambara-Yamagami tensor categories. The reasons to treat these continuous tensor categories are multiple. If we want to understand continuous tensor categories, it is reasonable to start by understanding simple examples, as one does in the fusion setting. In addition, Tambara-Yamagami categories are some of the first examples in which one needs access to sheaves of infinite support on the space of simple objects (as $\tau\otimes\tau$ will be a direct integral over the $G$-indexed simple invertible objects), and hence they are a setting in which we can exploit the benefits of our new model over previous definitions in terms of skyscraper sheaves. Explicit interest in continuous generalizations of Tambara-Yamagami tensor categories has recently appeared in the literature in \cite[Problem 7]{galindob} and \cite{galindo24} when discussing $\mathbb{Z}/2$-crossed-braided graded extensions of $\Vec\,G$ with a braiding, for $G$ finite, via condensation from the conjectural Tambara-Yamagami category for $\mathbb{R}^d$. The role of continuous Tambara-Yamagami tensor categories in CFT is outlined at the end of this introduction.

We next introduce the definition of continuous Tambara-Yamagami tensor categories. We provide a $\mathrm{C}^*$-algebra giving the underlying continuous category and a correspondence witnessing the tensor product. Let $G$ be a locally compact abelian group and define $A:=C_0(G)\oplus \CC$. The correspondence encoding the tensor product on $A$ is given as follows. Note that $A\otimes A\cong C_0(G\times G)\oplus C_0(G)\oplus C_0(G)\oplus\CC$ and we can construct the $A-A\otimes A$-correspondence
 \[
 \TYcal_G:= C_0(G\times G)\oplus C_0(G)\oplus C_0(G)\oplus L^2(G),
 \]
 where each of the summands of $A\otimes A$ acts on the right on the corresponding summand of $\TYcal_G$ by multiplication. The left $A$-action on $\TYcal_G$ is given by declaring that $C_0(G)$ acts on $C_0(G\times G)$ by pullback along the group operation $m:G\times G\to G$, on $L^2(G)$ by pointwise multiplication, and by zero on the two copies of $C_0(G)$. On the other hand, the summand $\CC$ of $A$ acts on the two copies of $C_0(G)$ by multiplication and by zero on the leftmost and rightmost summands of $\TYcal_G$. The $A-A\otimes A$-correspondence $\TYcal_G$ mimics the Tambara-Yamagami tensor structure. Indeed, the continuous tensor subcategory on $C_0(G)$ becomes $\Hilb\,G$, as described above, and the summand $\Rep(\CC) = \Hilb$ is acted upon on the left and on the right by $\Rep(C_0(G))$ by tensoring with the underlying Hilbert space of a representation. Denoting by $\tau$~the~canonical simple object of the summand $\Rep(\CC) = \Hilb$, it holds that $\tau\otimes \tau = L^2(G)\in \Rep(C_0(G))$.
 \begin{repdefinition}{def: ctsTY}
    A continuous Tambara-Yamagami tensor category for $G$ is a continuous tensor category whose underlying $\mathrm{C}^*$-algebra is $C_0(G)\oplus \CC$ and whose correspondence encoding the tensor product is $\TYcal_G$.     
 \end{repdefinition}

 We classify continuous Tambara-Yamagami tensor categories for $G$ as follows. Let $\chi:G\times G\to U(1)$ be a continuous symmetric bicharacter which is nondegenerate in the sense that it implements an isomorphism $x\mapsto \chi(x,-)$ from $G$ to its Pontryagin dual. In addition, pick $\xi\in \{\pm1\}.$ This data provides an associator for the tuple $(C_0(G)\oplus\CC, \TYcal_G)$ in an analogous way to the finite setting, and hence defines a continuous Tambara-Yamagami tensor category $\TYcal(G,\chi,\xi)$. The bicharacter $\chi$ encodes the $\Hilb\,G-\Hilb\,G$-bimodule structure on $\Hilb\cdot \tau$ and the associator $\alpha_{\tau,\tau,\tau}$ is given by the Fourier transform on $G$ induced by $\chi$, with sign $\xi$.

 Note that the group $\text{Aut}(G)$ of continuous automorphisms of $G$ acts on the set of continuous symmetric nondegenerate bicharacters on $G$ by pullback.
 \begin{reptheorem}{thm: finalTY2}
   Let $G$ be a locally compact abelian group. There is a bijection 
\[\hspace{-.5cm}
\nicefrac{\left\{\begin{array}{l} (\chi, \xi)\ |\ 
    \text{$\chi: G\times G\to U(1)$ a}\\
    \text{continuous symmetric} \\\text{nondegenerate bicharacter}\\\text{ and $\xi\in\{\pm1\}$}
  \end{array}\right\}}{\text{Aut}(G)}\xrightarrow{\TYcal(G,-,-)}\nicefrac{\left\{\begin{array}{l}
    \text{continuous Tambara-Yamagami}\\
    \text{tensor categories for $G$}
  \end{array}\right\}}{\text{continuous $\otimes$ equiv.}} 
\]
\end{reptheorem}
 
 As a consequence of the proof of Theorem \ref{thm: finalTY2}, we obtain the following characterization of linear tensor categories which are the image under $\mathfrak{F}$ of a continuous Tambara-Yamagami tensor category. We say that a Tambara-Yamagami $\mathrm{W}^*$-tensor category is a $\mathrm{W}^*$-tensor (not necessarily fusion or semisimple) category whose underlying category is $\Rep(C_0(G))\oplus\Hilb\cdot \tau$ and such that the tensor product makes the first summand $\Hilb\,G$, the first summand acts on $\Hilb\cdot \tau$ on the left and on the right by the forgetful functor $\Rep(C_0(G))\to\Hilb$, and $\Hom(\tau\otimes\tau, \tau) = 0$. Every continuous Tambara-Yamagami tensor category induces a Tambara-Yamagami $\mathrm{W}^*$-tensor category via the functor $\mathfrak{F}$. We show the following result, also as part of Theorem \ref{thm: finalTY2}.
 \begin{theorem*}
     Every Tambara-Yamagami $\mathrm{W}^*$-tensor category for $G$ is equivalent to $\mathfrak{F}(\mathcal{TY}(G,\chi, \xi))$ for a continuous symmetric nondegenerate bicharacter $\chi: G\times G\to U(1)$ and a sign $\xi\in\{\pm 1\}$. In addition, $\mathfrak{F}(\mathcal{TY}(G,\chi, \xi))$ and $\mathfrak{F}(\mathcal{TY}(G,\chi', \xi'))$ are tensor equivalent if and only if $\xi = \xi'$ and there exists a continuous group automorphism $\phi: G\to G$ such that $\chi' = \chi\circ(\phi\times\phi)$.
 \end{theorem*}

This result can be thought of as an automatic continuity of Tambara-Yamagami $\mathrm{W}^*$-tensor categories. That is, the Tambara-Yamagami fusion rules imply the continuity of the associators. In addition, Tambara-Yamagami $\mathrm{W}^*$-tensor categories are equivalent if and only if they are equivalent as continuous tensor categories.

\subsection*{Continuous tensor categories in CFT}
As we have mentioned before, one of our main motivations to introduce continuous tensor categories is their potential application to conformal field theory. One of the most developed approaches to 2-dimensional unitary chiral CFT is the theory of conformal nets. Given a conformal net, its category of representations has a topological space of (simple) objects and it admits direct integrals. We believe, as suggested by computations in \cite{bmt88, fredenhagen, MR2031030, MR3627412}, that categories of representations of conformal nets are canonically continuous tensor categories. In our work \cite{RepHeis}, we show that the categories of representations of the Heisenberg conformal net (which represents the free boson on a line) and its $\Z/2$-fixed points are continuous tensor categories. Actually, the latter is a $\Z/2$-equivariantization of a continuous Tambara-Yamagami tensor category for $G = \R$. The tools developed in the current paper are key in the characterization of these tensor~categories.

 \section*{Acknowledgements}
I am grateful to André Henriques and Christopher Douglas for their help in this article, and to Thomas Wasserman for our continuing conversations about this and related projects, from which this paper has greatly benefited. I thank Lucas Hataishi for his help with operator algebraic arguments. I am also grateful to Nivedita and Sofía Marlasca Aparicio for discussions on various aspects of this work. I am thankful to Miquel Saucedo Cuesta for his help in the proofs of Lemmas \ref{lemm: SolutionToEquationL2} and \ref{Lemm: Function P}. This work has been funded by the EPSRC grant EP/W524311/1.

For the purpose of Open Access, the author has applied a CC BY public copyright license to any Author Accepted Manuscript version arising from this submission.

\addtocontents{toc}{\protect\setcounter{tocdepth}{5}}
\section{Preliminaries}
\subsection{Tambara-Yamagami tensor categories}

Let $\Ccal$ be a $\CC$-linear category. Recall that a \emph{tensor structure} on $\Ccal$ consists of a bifunctor $\otimes: \Ccal \times \Ccal \to \Ccal$ together with a unit object $\1\in \Ccal$ and natural associators and unitors satisfying the pentagon and triangle identities. All the data is required to be compatible with the linear structure on $\Ccal$. We refer the reader to \cite{EGNO} for more details. A tensor category $\Ccal$ is called \emph{rigid} if every object $X\in \Ccal$ admits both a left and a right dual, that is, objects $X^*$ and $\prescript{*}{}{X}$ together with morphisms $\text{ev}_X:X^*\otimes X\to \1$, $\text{coev}_X: \1\to X\otimes X^*$ and $\text{ev}_{\prescript{*}{}{X}}: X\otimes\prescript{*}{}{X}\to \1$, $\text{coev}_{\prescript{*}{}{X}}: \1\to \prescript{*}{}{X}\otimes X$ satisfying the snake relations. An object $X$ is called \emph{invertible} if $\text{ev}_X$ and $\text{coev}_X$ are isomorphisms. A rigid tensor category $\Ccal$ is \emph{fusion} if $\Ccal$ has finitely many simple objects, including $\1$, all the $\Hom$ vector spaces are finite-dimensional, and $\Ccal$ is semisimple in the sense that every object is isomorphic to a finite direct sum of simples. 

In \cite{TY}, Tambara and Yamagami classified all fusion categories with exactly one non-invertible simple object $\tau$ up to isomorphism, which further satisfies that $\tau\otimes\tau$ decomposes as a finite direct sum of invertible simple objects. The construction is given as follows. Let $G$ be a finite abelian group (whose group operation we write additively) and $\chi:~G\times~G\to~\CC^\times$ a symmetric nondegenerate bicharacter on $G$. Furthermore, let $\xi$ be a choice of a square root of $\frac{1}{|G|}$. Then, the fusion category $\TYcal(G, \chi,\xi)$ is defined by:
\begin{enumerate}
	\item objects are finite direct sums of elements of $G\sqcup\{\tau\}$,
	\item tensor products of simple objects are given by, for $x,y\in G$,
	\[
	x\otimes y = x+y,\hspace{1cm} x\otimes \tau = \tau,\hspace{1cm}  \tau\otimes x = \tau,\hspace{1cm}  \tau\otimes \tau  =\bigoplus_{x\in G}x,
	\]
	\item associators for simple objects are given by, for $x,y,z\in G$,
\begin{align*}
		\alpha_{x,y,z} &= \id_{x+y+z}\\
		\alpha_{x,y,\tau} = \alpha_{\tau,x,y} &= \id_\tau\\
		\alpha_{x,\tau,y} &= \chi(x,y)\id_\tau\\
		\alpha_{x,\tau,\tau} = \alpha_{\tau,\tau,x} &= \bigoplus_{y\in G} \id_y\\
		\alpha_{\tau,x,\tau} & = \bigoplus_{y\in G} \chi(x,y)\id_y\\
		\alpha_{\tau,\tau,\tau}& = (\xi \cdot \chi(x,y)^{-1}\cdot \id_\tau)_{x,y}:\bigoplus_{x\in G} \tau\to \bigoplus_{x\in G} \tau
	\end{align*}
	\item the unit object is the unit $e\in G$ and unitors are identities.
\end{enumerate}
\begin{theorem}[\cite{TY}]
	Every fusion category with one non-invertible simple object (up to isomorphism) $\tau$ and such that $\tau\otimes\tau\neq0$ and $\Hom(\tau\otimes\tau, \tau) = 0$ is equivalent to $\TYcal(G, \chi,\xi)$ for some finite abelian group $G$, some symmetric nondegenerate bicharacter $\chi$ on $G$ and a choice $\xi$ of a square root of $\nicefrac{1}{|G|}$. In addition, $\TYcal(G, \chi,\xi)$ and $\TYcal(G', \chi',\xi')$ are equivalent if and only if $\xi = \xi'$ and $\chi(x,y) = \chi'\big(\phi(x), \phi(y)\big)$ for some isomorphism $\phi: G\to G'$.
\end{theorem}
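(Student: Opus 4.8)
The plan is to first pin down the fusion rules, then determine the associators by solving the pentagon equations, and finally read off the classification. Throughout I work in the skeletal model with simple objects $G\sqcup\{\tau\}$.

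\emph{Fusion rules.} The invertible simple objects are closed under $\otimes$ and under taking duals ($g^*=g^{-1}$), contain $\1$, and are finite in number, so they form a finite group $G$. For $g\in G$ the functor $g\otimes-$ is an autoequivalence, hence sends the simple object $\tau$ to a simple object; since it preserves (non)invertibility and $\tau$ is the unique non-invertible simple, $g\otimes\tau\cong\tau$, and likewise $\tau\otimes g\cong\tau$. The dual $\tau^*$ is again simple and non-invertible, so $\tau^*\cong\tau$. Using this self-duality together with $\tau\otimes g\cong\tau$, I compute $\dim\Hom(\tau\otimes\tau,g)=\dim\Hom(\tau,g\otimes\tau^*)=\dim\Hom(\tau,\tau)=1$ for every $g$, which forces $\tau\otimes\tau\cong\bigoplus_{g\in G}g$ with each invertible appearing exactly once. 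Abelianness of $G$ is not assumed; it will drop out at the end.

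\emph{Associator data and gauge fixing.} Restricting the associator to $G$ yields a $3$-cocycle $\omega\in Z^3(G,\CC^\times)$; the mixed associators $\alpha_{x,y,\tau},\alpha_{x,\tau,y},\alpha_{\tau,x,y}$ are scalars on $\tau$, the associators $\alpha_{x,\tau,\tau},\alpha_{\tau,x,\tau},\alpha_{\tau,\tau,x}$ are invertible ``diagonal'' endomorphisms of $\bigoplus_g g$, and $\alpha_{\tau,\tau,\tau}$ is an invertible $G\times G$ matrix. The monoidal gauge freedom—rescaling the chosen isomorphisms $g\otimes h\cong gh$, $g\otimes\tau\cong\tau$, $\tau\otimes g\cong\tau$, and the summand inclusions of $\tau\otimes\tau$—acts on this data by $2$-cochains, and I would use it to normalize $\alpha_{x,y,\tau}=\alpha_{\tau,x,y}=\id_\tau$ and to bring the remaining data into the shape of $\Ccal(G,\chi,\xi)$, writing $\alpha_{x,\tau,y}=\chi(x,y)\id_\tau$ for a function $\chi:G\times G\to\CC^\times$.

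\emph{Solving the pentagon.} I organize the pentagon identities by the number of $\tau$'s among the four arguments. With no $\tau$ they say $\omega$ is a cocycle; the one-$\tau$ pentagons then express $\omega$ as a coboundary, so after a further gauge change $\omega=1$. The two-$\tau$ pentagons force $\chi$ to be a bicharacter, $\chi(xy,z)=\chi(x,z)\chi(y,z)$ and $\chi(x,yz)=\chi(x,y)\chi(x,z)$, and to be symmetric, and they pin down the diagonal matrices in terms of $\chi$. The three-$\tau$ pentagons identify $\alpha_{\tau,\tau,\tau}$, up to a global scalar $\xi$, with the matrix $\big(\chi(x,y)^{-1}\big)_{x,y}$, i.e.\ $\alpha_{\tau,\tau,\tau}=\big(\xi\,\chi(x,y)^{-1}\id_\tau\big)_{x,y}$. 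Finally the single four-$\tau$ pentagon is a Gauss-sum identity forcing $\xi^2=1/|G|$. Invertibility of $\alpha_{\tau,\tau,\tau}$ is equivalent to invertibility of $(\chi(x,y))_{x,y}$, i.e.\ to nondegeneracy of $\chi$; and since a bicharacter is trivial on $[G,G]$ in each variable, nondegeneracy forces $[G,G]=\{e\}$, so $G$ is abelian. This yields the first assertion. The main obstacle is the bookkeeping in the two- and three-$\tau$ pentagons—matching summands across $\bigoplus_g g$ and tracking the residual gauge—together with extracting the clean normalization $\xi^2=1/|G|$ from the four-$\tau$ identity.

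\emph{Classification up to isomorphism.} A tensor equivalence $\Ccal(G,\chi,\xi)\to\Ccal(G',\chi',\xi')$ must send invertibles to invertibles and the unique non-invertible $\tau$ to $\tau'$, hence restricts to a group isomorphism $\phi:G\to G'$. Comparing its tensorator against the associators on the triples $(x,\tau,y)$ yields $\chi(x,y)=\chi'(\phi(x),\phi(y))$, while the triple $(\tau,\tau,\tau)$ yields $\xi=\xi'$; conversely, such a $\phi$ builds the equivalence directly from the construction of $\Ccal(G,\chi,\xi)$. The sign of $\xi$ is moreover a categorical invariant—it is the second Frobenius–Schur indicator of $\tau$, the modulus $|\xi|=1/\sqrt{|G|}$ being fixed by $\dim\tau$—so no gauge change can alter it. This gives the stated bijection.
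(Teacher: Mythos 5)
The paper does not actually prove this statement---it is quoted from \cite{TY} as background---so the only proof to compare against is Tambara--Yamagami's original one, and your outline follows it essentially verbatim: derive the fusion rules from rigidity, gauge-fix, solve the pentagon equations stratified by the number of $\tau$'s among the four arguments, and extract $(\chi,\xi)$, with nondegeneracy coming from invertibility of $\alpha_{\tau,\tau,\tau}$ and abelianness of $G$ as a consequence. The sketch is correct; the one point to make explicit is that your $\Hom$ computation only fixes the multiplicity of each invertible in $\tau\otimes\tau$, and it is the standing hypothesis $\Hom(\tau\otimes\tau,\tau)=0$ that excludes $\tau$ itself as a summand. This is also precisely the strategy the present paper adapts in Section \ref{sec: Automaticcontinuity} to the continuous setting, where your pointwise gauge-fixing steps must be replaced by $L^\infty$ and measure-theoretic arguments (Lemmas \ref{lemm: SolutionToEquationL2} and \ref{Lemm: Function P}).
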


\subsection{Fourier analysis on locally compact abelian groups}

For the rest of this paper, all topological spaces are assumed to be locally compact, Hausdorff, and second countable. In the finite case, as discussed in \cite{TY} and in the previous section, every finite abelian group $G$ admits a Tambara-Yamagami category of the form $\text{Vec}G\oplus \text{Vec}\cdot \tau$. In the continuous setting, this will no longer be the case, the reason being that not all locally compact abelian groups are self-Pontryagin dual.

\begin{definition}
	Let $G$ be a locally compact abelian group. The \emph{Pontryagin dual} of $G$~is~the~group
	\[
	\hat{G} : = \Hom\big(G, U(1)\big)
	\]
	of continuous group homomorphisms from $G$ into $U(1)$, equipped with pointwise multiplication and the compact-open topology. We say $G$ is \emph{self-Pontryagin dual} if there is an isomorphism of topological groups
	\(
	G\cong \hat{G}.
	\)
\end{definition}

\begin{example} 
	The following groups are self-Pontryagin dual:
	\begin{enumerate}
		\item finite abelian groups,
		\item the additive group $\mathbb{R}^n$,
		\item the product group $\mathbb{Z}\times U(1).$ 
	\end{enumerate}
\end{example}

Recall that, on every locally compact group $G$, there exists a non-negative regular measure $\mu$, the Haar measure on $G$, which is not identically zero and which is (left) translation invariant. When considering $L^p$-functions on a group, we will always be referring to the Haar measure, unless specified otherwise. We write $L^2(G)$ and $L^2(\mu)$ interchangeably.

\begin{definition}
	Let $G$ be a locally compact abelian group and $f\in L^1(G)$. The function on~$\hat{G}$
	\[
	\Fcal(f): \eta\mapsto \int_Gf(x)\eta(-x)d\mu(x)
	\]
	is called the \emph{Fourier transform} of $f$.
\end{definition}

The Fourier transform can be extended to $L^2(G)$, see \cite[Thm. 1.6.1]{rudin}.

\begin{theorem}[Plancherel Theorem]
	Let $G$ be a locally compact abelian group. There is a Haar measure $\hat{\mu}$ on $\hat{G}$ for which the Fourier transform can be extended uniquely to a unitary $$\Fcal:L^2(G, \mu)\cong L^2(\hat{G}, \hat{\mu{}}).$$
\end{theorem}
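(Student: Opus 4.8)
The plan is to establish the Plancherel identity on a dense subspace of $L^2(G)$, extend the transform by continuity, and finally verify that the resulting isometry is surjective. Since $\Fcal$ is initially only defined on $L^1(G)$, the first task is to locate a subspace on which it simultaneously makes sense and is $L^2$-isometric. The natural candidate is $L^1(G)\cap L^2(G)$, which is dense in $L^2(G)$ because it contains, for instance, all compactly supported continuous functions. Restricting attention to this subspace is where I would begin.

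The heart of the matter is the Plancherel identity $\|\Fcal(f)\|_{L^2(\hat G)} = \|f\|_{L^2(G)}$ for $f\in L^1(G)\cap L^2(G)$, valid once the Haar measure $\nu$ on $\hat G$ is normalized so as to be dual to $\mu$. First I would reduce to functions of the special form $h = f * \tilde f$, where $\tilde f(x) = \overline{f(-x)}$: a short computation shows $h$ is continuous and positive-definite, that $h(e) = \|f\|_{L^2(G)}^2$, and that $\Fcal(h) = |\Fcal(f)|^2 \geq 0$. Invoking Bochner's theorem, every continuous positive-definite function is the inverse Fourier transform of a unique finite positive measure on $\hat G$; identifying this measure as $|\Fcal(f)|^2\,d\nu$ for the correctly normalized $\nu$ yields the Fourier inversion formula, and evaluating it at the identity gives $h(e) = \int_{\hat G}|\Fcal(f)|^2\,d\nu$. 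Comparing the two expressions for $h(e)$ delivers the Plancherel identity on the dense subspace, and polarization upgrades it to the full Parseval identity for inner products.

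With the identity in hand the rest is formal. A linear map defined and isometric on a dense subspace of $L^2(G)$ extends uniquely to a linear isometry $\Fcal\colon L^2(G)\to L^2(\hat G)$, uniqueness being automatic from density and completeness of the target. To upgrade this isometry to an isomorphism I would show its image is dense, and the cleanest route is to run the same construction with $\hat G$ in place of $G$, using Pontryagin duality $\hat{\hat G}\cong G$, so that the inverse Fourier transform is likewise an isometry serving as a two-sided inverse on the relevant dense subspaces. Since an isometry has closed range, density of the image forces surjectivity, giving the claimed unitary $L^2(G)\cong L^2(\hat G)$.

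The main obstacle is the Plancherel identity on the dense subspace, which rests squarely on Bochner's theorem and on the correct normalization of dual Haar measures; by contrast, density of $L^1(G)\cap L^2(G)$, the continuous extension, and the surjectivity argument are all routine. This is precisely the analytic input supplied by the cited reference, and in the locally compact abelian setting it is the substitute for the elementary finite-dimensional Fourier inversion that underlies the finite Tambara-Yamagami construction.
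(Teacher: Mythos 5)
The paper does not prove this statement; it is quoted as a standard preliminary and deferred to the cited reference (Rudin, Thm.\ 1.6.1). Your sketch is essentially that reference's proof: establish $\|\Fcal(f)\|_{L^2(\hat G)}=\|f\|_{L^2(G)}$ on the dense subspace $L^1(G)\cap L^2(G)$ by applying Bochner's theorem and the inversion formula to $h=f*\tilde f$, then extend by continuity and polarize. The one place you diverge is surjectivity: Rudin shows directly that the image of $L^1\cap L^2$ is dense in $L^2(\hat G)$, using that it is invariant under translation and under multiplication by characters (the two properties the paper records as Proposition 2.6) and that any $L^2$ function orthogonal to such a subspace vanishes; you instead invoke Pontryagin duality and run the construction on $\hat G$ to produce a two-sided inverse. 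That route is legitimate and common, but it imports $\hat{\hat G}\cong G$ as an input, which in Rudin's ordering is proved \emph{after} Plancherel (though only the inversion theorem is actually needed for it, so there is no true circularity if one is careful); it also requires matching the normalization of the Haar measure on $\hat{\hat G}$ with the original one on $G$. Rudin's direct density argument avoids both issues and is marginally more self-contained. Either way, your proposal is correct.
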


Every time we consider $L^2$-functions on $\hat{G}$, the underlying measure is the dual Haar measure $\hat{\mu}$, unless stated otherwise. We will need the following properties of the Fourier transform, which can be found for $L^1$-functions in (the proof of) \cite[Thm. 1.2.4]{rudin}, and can be extended to $L^2(G)$.

\begin{proposition} \label{prop: PropertiesF}
	Let $f\in L^2(G)$, $x_0\in G$ and $\eta_0\in \hat{G}$.
	\begin{enumerate}
		\item If $g(x) = \eta_0(x)f(x)$, then $\Fcal(g)(\eta) = \Fcal(f)(\eta-\eta_0)$, \label{prop: PropertiesF1}
		\item If $g(x) = f(x-x_0)$, then $\Fcal(g)(\eta) = \eta(-x_0)\Fcal(f)(\eta)$. \label{prop: PropertiesF2}
	\end{enumerate}
\end{proposition}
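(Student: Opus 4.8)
The plan is to prove both identities first for $f\in L^1(G)$ by unwinding the definition of the Fourier transform, and then to extend them to all of $L^2(G)$ by a density argument together with the Plancherel theorem recorded above.

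For the first identity I would substitute $g = \eta_0 f$ directly into the defining integral. The only input needed is the elementary character identity $\eta_0(x)\,\eta(-x) = (\eta-\eta_0)(-x)$, valid because $\eta_0$ is $U(1)$-valued so that $\eta_0(-x)^{-1} = \eta_0(x)$; here $\eta-\eta_0$ denotes the character $x\mapsto \eta(x)\eta_0(x)^{-1}$ in the additive notation on $\hat{G}$. This rewrites $\Fcal(g)(\eta)$ as $\int_G f(x)\,(\eta-\eta_0)(-x)\,d\mu(x) = \Fcal(f)(\eta-\eta_0)$. For the second identity I would perform the change of variables $y = x - x_0$, using translation invariance of the Haar measure $\mu$, and then factor $\eta(-(y+x_0)) = \eta(-x_0)\,\eta(-y)$ so as to pull the unit-modulus constant $\eta(-x_0)$ out of the integral.

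To upgrade from $L^1(G)$ to $L^2(G)$, I would work on the dense subspace $L^1(G)\cap L^2(G)\subseteq L^2(G)$. Each of the four operations appearing in the statement is isometric on the relevant space: multiplication by $\eta_0$ and translation by $x_0$ are unitary on $L^2(G)$ since $|\eta_0|\equiv 1$ and $\mu$ is translation invariant, while translation by $\eta_0$ and multiplication by $\eta\mapsto\eta(-x_0)$ are unitary on $L^2(\hat{G})$. Consequently each identity equates two bounded operators $L^2(G)\to L^2(\hat{G})$, each obtained by composing the isometry $\Fcal$ (Plancherel) with isometries on either side. Since these two operators agree on the dense subspace by the $L^1$ computation, they agree on all of $L^2(G)$ by continuity, which is the claim.

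I do not expect a genuine obstacle here, as the content is classical. The only points requiring care are the bookkeeping between the multiplicative structure on $\hat{G}$ and the additive notation $\eta-\eta_0$ in the first identity, and the verification that the relevant operators are truly isometric so that the density argument is legitimate; both are immediate given the Plancherel theorem.
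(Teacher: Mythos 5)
Your proposal is correct and follows the same route the paper takes: it cites Rudin for the $L^1$ identities (which you verify directly via the character identity and translation invariance of Haar measure) and then extends to $L^2(G)$ by density, which is exactly your Plancherel-plus-unitarity argument on the dense subspace $L^1(G)\cap L^2(G)$. No gaps.
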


\section{Continuous tensor categories}
\subsection[The Morita bicategory of C*-algebras and correspondences]{The Morita bicategory of $\mathrm{C}^*$-algebras and correspondences}
Our definition of continuous tensor categories will strongly rely on the theory of $\mathrm{C}^*$-algebras. We give a short introduction to the topic here and direct the reader to \cite{OAbook} for more details. Recall that a \emph{$\mathrm{C}^*$-algebra} is a Banach algebra which is further equipped with an involution $-^*:A\to A$ such that $||a^*a|| = ||a||^2$. An element $a\in A$ is called \emph{positive} if $a = b^*b$ for some $b\in A$. We use the terminology \emph{$\mathrm{C}^*$-algebra} to refer to separable $\mathrm{C}^*$-algebras, meaning that their underlying Banach algebra is separable. All Hilbert spaces are also assumed to be separable and all measure spaces, standard measure spaces.

A \emph{representation} of a $\mathrm{C}^*$-algebra $A$ is a $*$-homomorphism $\pi: A\to B(H)$, for a Hilbert space $H$. We say that $\pi$ is \emph{nondegenerate} if $\pi(A)H$ is dense in $H$. From now on, we will only consider nondegenerate representations of $\mathrm{C}^*$-algebras. Given a $\mathrm{C}^*$-algebra $A$, we denote its category of nondegenerate representations by $\Rep(A).$ The set of isomorphism classes of irreducible representations of $A$ is called its \emph{spectrum}. We will call these categories ``semisimple'', as every short exact sequence splits. Indeed, let $0\to H\xrightarrow{\iota} K\xrightarrow{\pi} R\to 0$ be a sequence of maps in $\Rep(A)$ such that $\iota$ is injective, $\pi$ is surjective and $\text{Im}(\iota) =\ker(\pi).$ Then, the orthogonal complement $\iota(H)^\perp\subset K$ is an $A$-representation which is isomorphic to $R$. Hence, $K = \iota(H)\oplus\iota(H)^\perp\cong H\oplus R$.

The bicategory of continuous semisimple categories will be a Morita bicategory of $\mathrm{C}^*$-algebras. Hence, we introduce the appropriate notion of $\mathrm{C}^*$-algebra bimodules, see \cite[II.7.1]{OAbook}.

\begin{definition}
    Let $A$ be a $\mathrm{C}^*$-algebra and $\Ecal$ an algebraic right $A$-module. An $A$\emph{-valued pre-inner product} on $\Ecal$ is a function $\langle - , - \rangle : \Ecal\times \Ecal\to A$ such that, for all $\xi,\eta, \zeta\in \Ecal$ and $a\in A$, $c\in \CC$,
    \begin{enumerate}
        \item $\langle \xi, c\eta+\zeta\rangle = c\langle \xi, \eta\rangle + \langle \xi, \zeta\rangle$,
        \item $\langle \xi, \eta a\rangle = \langle \xi, \eta\rangle a$,
        \item $\langle \eta, \xi\rangle = \langle \xi, \eta\rangle^*$,
        \item $\langle \xi, \xi\rangle$ is positive as an element of $A$.
    \end{enumerate}
    A \emph{(right) Hilbert $A$-module} is a right $A$-module $\Ecal$ with a pre-inner product $\langle-,-\rangle$ such that $\langle\xi, \xi\rangle = 0\implies \xi = 0$ for all $\xi\in\mathcal{E}$ and $(\Ecal, \lvert\lvert\langle-,-\rangle\rvert\rvert^{1/2})$ is complete.
\end{definition}

Given two Hilbert $A$-modules $\Ecal$ and $\Dcal$, we say that a $\CC$-linear operator $u:\Ecal\to \Dcal$ is \emph{adjointable} if there exists an adjoint $u^*:\Dcal \to \Ecal$ with respect to the $A$-valued inner product in the sense that $\langle u- ,- \rangle = \langle -,u^*-\rangle$. We write $\Lcal_A(\Ecal, \Dcal)$ for the space of adjointable operators from $\Ecal$ to $\Dcal$ and $\Lcal_A(\Ecal):=\Lcal_A(\Ecal, \Ecal)$. Note that an adjointable operator is automatically $A$-linear and bounded. Given two $\mathrm{C}^*$-algebras $B$ and $A$, a $B-A$-\emph{correspondence} is a Hilbert $A$-module $\Ecal$ together with a nondegenerate homomorphism $\phi: B\to \mathcal{L}_A(\Ecal)$ from $B$ into the adjointable endomorphisms of $\Ecal$. We will denote a correspondence as such by $(\Ecal, \phi)$ or simply by $\Ecal$. If $\Dcal$ is another $B-A$-correspondence, an \emph{intertwiner} from $\Ecal$ to $\Dcal$ is a morphism $u\in \mathcal{L}_A(\Ecal, \Dcal)$ such that the following diagram commutes for all $b\in B$,
\[\begin{tikzcd}
	\Ecal & \Ecal \\
	\Dcal & \Dcal.
	\arrow["{\phi(b)}", from=1-1, to=1-2]
	\arrow["u"', from=1-1, to=2-1]
	\arrow["u", from=1-2, to=2-2]
	\arrow["{\psi(b)}"', from=2-1, to=2-2]
\end{tikzcd}\]
We denote by $\Corr(B, A)$ the category whose objects are $B-A$-correspondences and whose morphisms are intertwiners.

Correspondences can be composed as follows. Given three $\mathrm{C}^*$-algebras $A,B$ and $C$, together with an $A-B$-correspondence $(\Ecal, \phi)$ and a $B-C$-correspondence $(\Dcal, \psi)$, we can obtain an $A-C$-correspondence $(\Ecal,\phi)\otimes_B(\Dcal, \psi)$ denoted
\[
(\Ecal\otimes_\psi\Dcal, \phi\otimes \text{I}).
\]
The Hilbert $C$-module $\Ecal\otimes_\psi\Dcal$ is constructed from the quotient of the algebraic tensor product $\Ecal\odot \Dcal$ by the subspace spanned by $\{\xi b\otimes \eta-\xi\otimes \psi(b)\eta\ |\ \xi\in\Ecal,\ \eta\in \Dcal,\ b\in B\}$ by further completing it (after quotienting by the null vectors) with respect to the norm induced by the pre-inner product $\langle \xi_1\otimes \eta_1, \xi_2\otimes \eta_2\rangle := \langle \eta_1,\psi(\langle\xi_1,\xi_2\rangle)\eta_2\rangle$, see \cite[II.7.4.4]{OAbook}. This procedure is known as the \emph{Rieffel tensor product of $\Ecal$ and $\Dcal$}. Note that, given a $\mathrm{C}^*$-algebra $A$, there is a canonical $A-A$-correspondence given by seeing $A$ as a Hilbert module over itself with the pre-inner product $\langle a,a'\rangle:= a^*a'$, and where the left and right actions are given by left and right multiplication respectively. The pre-inner product is an inner product by the $\mathrm{C}^*$-identity $||a^*a|| = ||a||^2$. We denote this correspondence by $A$ and note that, for any other $B-A$-correspondence $\Ecal$ and $A-B$-correspondence $\Dcal$, we have
\[
\Ecal\otimes_A A \cong \Ecal\hspace{1cm} A\otimes_A \Dcal \cong \Dcal.
\]
More generally, any nondegenerate $*$-homomorphism $B\to A$ produces the structure of a correspondence\footnote{Even more generally, any nondegenerate $*$-homomorphism $\phi: B\to M(A)$ into the multiplier algebra of $A$ defines one such correspondence.} on the trivial $A$-Hilbert module $A$. 

Given a third $\mathrm{C}^*$-algebra $C$, the Rieffel tensor product provides a functor
\[
-\otimes_B -: \Corr(C, B)\times \Corr(B, A)\to \Corr(C, A).
\]
At the level of morphisms, if $u:(\Ecal, \phi)\to(\Ecal', \phi')$ is an intertwiner in $\Corr(B, A)$ and $v:(\Dcal, \psi)\to(\Dcal', \psi')$ is an intertwiner in $\Corr(C, B)$, we define 
\[
v\otimes_B u: (\Dcal\otimes_\psi\Ecal, \phi\otimes I)\to (\Dcal'\otimes_{\psi'}\Ecal', \phi'\otimes I)
\]
as the morphism induced by $v\odot u: \Dcal\odot \Ecal\to \Dcal'\odot\Ecal'$.

Equipped with the Rieffel tensor product as composition, $\mathrm{C}^*$-algebras, together with correspondences and intertwiners, form a bicategory \cite{landsman}.

\begin{definition}\label{def: 2Calg}
    We denote by $\MorC$ the bicategory whose objects are $\mathrm{C}^*$-algebras and whose category of morphisms between two $\mathrm{C}^*$-algebras $A$ and $B$ is $\Corr(B,A).$ The composition of 1- and 2-morphisms is given by the Rieffel tensor product and the identity 1-morphisms are the correspondences $A$ for $A$ a $\mathrm{C}^*$-algebra. 
\end{definition}

\begin{remark}
    The invertible 1-morphisms in $\MorC$ are known as \emph{imprimitivity bimodules} in the literature.
\end{remark}

The Morita bicategory $\MorC$ models the bicategory of ``semisimple'' continuous categories. Given a $\mathrm{C}^*$-algebra $A$, the set of simple objects of $\Rep(A)$ up to isomorphism has a canonical topology. Given another $\mathrm{C}^*$-algebra $B$, a continuous functor between $\Rep(A)$ and $\Rep(B)$ is encoded by a $B-A$-correspondence. In order to make this picture precise, we need to provide a way to, given $A\in \MorC$, recover its associated linear category, and given a $B-A$-correspondence, obtain a functor between $\Rep(A)$ and $\Rep(B)$. 

Recall that a \emph{von Neumann algebra} on a Hilbert space $H$ is a $*$-subalgebra $A$ of $B(H)$ such that $A'' = A$, where $A' = \{b\in B(H)\ |\ ab = ba \text{ for all $a\in A$}\}$, and $A'':=(A')'$. A $*$-algebra is a \emph{von Neumann} algebra if it admits a faithful $*$-action on a Hilbert space $H$ exhibiting it as a von Neumann algebra on $H$. A \emph{$*$-category} is a $\CC$-linear category equipped with a dagger structure $*:\Hom(X,Y)\to \Hom(Y,X)$ which is $\CC$-antilinear and satisfies $f^{**} = f$ and $(f\circ g)^* = g^*\circ f^*.$ Given a $*$-category $\Ccal$, we write $\Ccal^\oplus$ for the category whose objects are formal finite direct sums $\oplus_{i\in I} X_i$ and whose morphisms are $\Hom_{\Ccal^\oplus}(\oplus_{i\in I}X_i, \oplus_{j\in J}Y_j):=\oplus_{i\in I, j\in J}\Hom_{\Ccal}(X_i, Y_j).$
\begin{definition}
 A $\mathrm{W}^*$\emph{-category} is a $*$-category $\Ccal$ such that $\End_{\Ccal^\oplus}(X)$ is a von Neumann algebra for all $X\in \Ccal^{\oplus}.$
\end{definition}

Given two von Neumann algebras $A$ and $B$, a $*$-algebra homomorphism $f:A\to B$ is called \emph{normal} if $f(\sup a_i) = \sup f(a_i)$ for every bounded increasing net $\{a_i\}_i$ of positive elements of $A$. A \emph{functor of $\mathrm{W}^*$-categories} is a functor $F:\Ccal\to \Dcal$ of $\CC$-linear categories that preserves the involution and such that it induces normal homomorphisms $\End_{\Ccal^\oplus}(X)\to \End_{\Dcal^\oplus}(F(X))$ for all $X\in \Ccal^\oplus.$ A natural transformation $\alpha$ between $\mathrm{W}^*$-functors is called \emph{bounded} if $||\alpha||:=\sup_{X\in\Ccal}||\alpha_X||<\infty$, where the norm of a morphism $f:X\to Y$ in a $\mathrm{W}^*$-category is defined to be the norm of $f$ in $\End_{\Ccal^{\oplus}}(X\oplus Y)$. In a $\mathrm{W}^*$-category, there exists the notion of orthogonal direct sums, extending that of Hilbert spaces \cite[Def. 3.7]{henriques2024completewcategories}.

\begin{definition}
    We write $\Wcat$ for the bicategory of idempotent complete $\mathrm{W}^*$-categories  admitting all countable orthogonal direct sums, $\mathrm{W}^*$-functors and bounded natural transformations.
\end{definition}

All $\mathrm{W}^*$-categories appearing in this paper are assumed to admit all countable orthogonal direct sums. It is well known that, if $A$ is a $\mathrm{C}^*$-algebra, its category of representations $\Rep(A)$ is a $\mathrm{W}^*$-category. Indeed, if $H$ is an $A$-representation, then $\End_{\Rep(A)}(H)$ is the commutant of $A$ in $B(H)$ and is therefore a von Neumann algebra. The assignment $A\mapsto \Rep(A)$ can be extended to a functor $\Ffrak:\MorC\to \Wcat$.
Let $A,B$ be $\mathrm{C}^*$-algebras and $\Ecal$ a $B-A$-correspondence. We define the functor
\[
\Ffrak(\Ecal):\Rep(A)\to \Rep(B)
\]
as follows. A representation $H\in \Rep(A)$ is equivalently an $A-\CC$-correspondence, and we can take its Rieffel tensor product with $\Ecal$ to obtain a $B-\CC$-correspondence $\Ecal \otimes_A H$, that is, an element of $\Rep(B)$. Therefore, we can define the functor $\Ffrak(\Ecal):\Rep(A)\to \Rep(B)$ as the functor
\[
 (\Ecal, \phi)\otimes_A - : \Rep(A) = \Corr(\CC, A)\to \Corr(\CC, B) = \Rep(B).
\]
At the level of morphisms, $\Ffrak(\Ecal)$ sends a morphism $f: H\to K$ in $\Rep(A)$ to $\Ffrak(\Ecal)(f) : = \id_\mathcal{E}\otimes_A f: \mathcal{E}\otimes_A H\to \mathcal{E}\otimes_A K$. Hence, $\Ffrak(\Ecal)$ is a $\mathrm{W}^*$-functor. If $v$ is an intertwiner between $B-A$-correspondences $\Ecal$ and $\Dcal$, we define $\Ffrak(v)_H:\Ecal \otimes_A H\to \Dcal \otimes_A H$ as the morphism induced by $v\odot_A \id_H$.
\begin{proposition}\label{prop: functorF}
    The data above produces a functor of bicategories 
    \[
    \Ffrak: \MorC\to \Wcat.
    \]
    The functor is faithful at the level of $2$-morphisms.
\end{proposition}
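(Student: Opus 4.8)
The plan is to recognize $\Ffrak$ as the corepresentable pseudofunctor $\Hom_{\MorC}(\CC,-)$ of the bicategory $\MorC$, and then to verify that this pseudofunctor refines to one valued in $\Wcat$. Under the identification $\Rep(A)\cong\Hom_{\MorC}(\CC,A)$ recorded above (a representation of $A$ is the same datum as an $A$--$\CC$-correspondence), the functor $\Ffrak(\Ecal)=\Ecal\otimes_A-$ is precisely post-composition with the $1$-morphism $\Ecal\colon A\to B$, and $\Ffrak(v)=v\odot_A\id$ is whiskering by the $2$-morphism $v$. Consequently the compositor and unitor $2$-cells of $\Ffrak$ are supplied verbatim by the associator $a_{\Dcal,\Ecal,H}\colon \Dcal\otimes_B(\Ecal\otimes_A H)\xrightarrow{\cong}(\Dcal\otimes_B\Ecal)\otimes_A H$ and the left unitor $A\otimes_A H\cong H$ of $\MorC$, and the pentagon and triangle coherences for $\Ffrak$ are instances of the coherences of $\MorC$ itself (with the last slot occupied by a $\CC$-correspondence). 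Since $\MorC$ is already a bicategory \cite{landsman}, the purely bicategorical part of the statement comes for free; what remains is to check that all this data lands in $\Wcat$.

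For the $\mathrm{W}^*$-refinement I would verify the following points. First, $\Rep(A)$ is a direct-sum and idempotent complete $\mathrm{W}^*$-category: endomorphism algebras are commutants of $A$ and hence von Neumann algebras, $\Rep(A)$ is closed under orthogonal direct sums, and idempotents split because subrepresentations correspond to projections in these commutants. Second, each $\Ffrak(\Ecal)=\Ecal\otimes_A-$ is a $\mathrm{W}^*$-functor: it is $\CC$-linear and $*$-preserving because adjoints are computed slotwise on the algebraic tensor product and extend to the completion, and the induced map $\End_{\Rep(A)}(H)\to\End_{\Rep(B)}(\Ecal\otimes_A H)$, $T\mapsto \id_\Ecal\otimes_A T$, is a normal $*$-homomorphism. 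Third, each intertwiner $v\colon\Ecal\to\Dcal$ induces a bounded natural transformation, since $v\otimes_A\id_H$ is adjointable with $\lvert\lvert v\otimes_A\id_H\rvert\rvert\le\lvert\lvert v\rvert\rvert$ uniformly in $H$, while functoriality $\Ffrak(w\circ v)=\Ffrak(w)\circ\Ffrak(v)$ and $\Ffrak(\id_\Ecal)=\id$ are inherited from the Rieffel tensor product. Finally, the coherence isomorphisms are unitaries, since the associator and unitor of $\MorC$ are implemented by unitary isomorphisms of Hilbert modules.

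For faithfulness of $\Ffrak$ on $2$-morphisms, by $\CC$-linearity it suffices to show that $\Ffrak(v)=0$ forces $v=0$ for an intertwiner $v\colon\Ecal\to\Dcal$ of $B$--$A$-correspondences. Fix a faithful nondegenerate representation $\pi\colon A\to B(H)$, which exists via the universal (GNS) representation. By the formula for the internal-tensor-product inner product, for $\xi\in\Ecal$ and $\eta\in H$ one has $\lvert\lvert v(\xi)\otimes\eta\rvert\rvert^2=\langle\eta,\pi(\langle v\xi,v\xi\rangle)\eta\rangle$. If $\Ffrak(v)=v\otimes_A\id_H=0$ then this vanishes for every $\eta$, so the positive element $\pi(\langle v\xi,v\xi\rangle)$ is $0$; faithfulness of $\pi$ gives $\langle v\xi,v\xi\rangle=0$, i.e. $v\xi=0$ for all $\xi$, hence $v=0$.

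The \textbf{main obstacle} is the normality of the induced homomorphism $T\mapsto \id_\Ecal\otimes_A T$ on endomorphism algebras: this is the only genuinely analytic input, and it is what distinguishes $\Ffrak(\Ecal)$ being a $\mathrm{W}^*$-functor from being merely a $*$-functor. I would establish it from the order-continuity of the interior tensor product of Hilbert modules, showing that a bounded increasing net of positive operators $T_i\nearrow T$ in the commutant is carried to $\id_\Ecal\otimes T_i\nearrow\id_\Ecal\otimes T$ by computing the relevant suprema on the dense algebraic tensor product. Everything else is either formal, coming from the representable pseudofunctor, or reduces to the short norm estimates above.
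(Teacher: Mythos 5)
Your proposal is correct and follows essentially the same route as the paper: the compositor/unitor data and their coherences are inherited from the associators and unitors of the Rieffel tensor product (i.e.\ $\Ffrak$ is post-composition under $\Rep(A)\cong\Corr(\CC,A)$), and faithfulness on $2$-morphisms is proved by evaluating on a faithful representation of the separable algebra $A$. The only differences are cosmetic: the paper tests against a single cyclic vector $\Omega$ with $A\Omega$ dense (using the resulting inclusion $\Ecal\hookrightarrow\Ecal\otimes_A H$) where you quantify over all vectors $\eta$, and you spell out the $\mathrm{W}^*$-refinement (normality of $T\mapsto\id_\Ecal\otimes T$, boundedness, unitarity of the coherences) that the paper leaves implicit.
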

\begin{proof}
The compatibility data of the Rieffel tensor product with the composition of functors in $\Wcat$ is given by the associator of the Rieffel tensor product. The unitor data is also given by the unitor data of the Rieffel tensor product. To show faithfulness at the level of 2-morphisms, let $A$ and $B$ be $\mathrm{C}^*$-algebras and $u, v:\Ecal\to \Dcal$ be 2-morphisms in $\Corr(B,A)$. Assume the corresponding natural transformations $\Ffrak(u)$ and $\Ffrak(v)$ are the same in $\Hom_{\Wcat}(\Rep(A), \Rep(B)).$ That is, for every $H\in \Rep(A)$ and every $e\in \Ecal$, we have the equality
\[
u(e)\otimes h = v(e)\otimes h
\]
in $\Dcal \otimes_A H$ for every $h\in H$. Since $A$ is separable, there exists a faithful nondegenerate representation $(H, \pi)$ of $A$. Let $w :=u-v$. Then, for every $e\in \mathcal{E}$ and $h\in H$, we have
\[
w(e)\otimes h = (w\otimes_A\id_H)(e\otimes h) = 0,
\]
implying that $0 = ||w(e)\otimes h||^2 = \langle h, \pi\big(\langle w(e), w(e)\rangle\big)(h)\rangle$. Therefore, $\pi\big(\langle w(e), w(e)\rangle\big) = 0$ and, since $\pi$ is faithful, $\langle w(e), w(e)\rangle = 0$. Therefore, $w = 0$.
\end{proof}

If we think of $\MorC$ as encoding continuous ``semisimple'' categories, Proposition \ref{prop: functorF} says that $\Ffrak$ is the forgetful functor that recovers the underlying $\mathrm{W}^*$-category. The image of
$$\Ffrak: \Corr(B,A)\to \Hom_{\Wcat}(\Rep(A), \Rep(B))$$ should then be thought of as those $\mathrm{W}^*$-functors which are continuous.

The bicategory $\MorC$ can be upgraded to a symmetric monoidal bicategory as follows. Given two $\mathrm{C}^*$-algebras $A$ and $B$, there is a poset of $\mathrm{C}^*$-norms on their algebraic tensor product $A\odot B$. This poset has a unique maximal element $||-||_{\text{max}}$ given by
    \[
    || \sum\limits_{i = 1}^n a_i\otimes b_i||_{\text{max}} := \sup ||\pi\big(
     \sum\limits_{i = 1}^n a_i\otimes b_i\big)||,
    \]
    where $\sup$ runs over all representations $\pi$ of $A\odot B$. The completion of $A\odot B$ with respect to this norm is denoted $A\otimes B$ and called the \emph{maximal tensor product of $A$ and $B$}. The maximal tensor product satisfies the following universal property: given $\phi: A\to C$ and $\psi: B\to C$ homomorphisms of $\mathrm{C}^*$-algebras with commuting images, there is a unique morphism $\rho: A\otimes B\to C$ such that $\rho(a\otimes b) = \phi(a)\psi(b)$ for all $a\in A$ and $b\in B$.

\begin{proposition}\label{prop: maximaltensor}
 The maximal tensor product can be extended to a functor of bicategories
\[
-\otimes - : \MorC\times\MorC\to \MorC.
\]   
Moreover, this defines a symmetric monoidal bicategory structure on $\MorC$.
\end{proposition}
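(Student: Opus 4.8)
The plan is to build the tensor functor from the \emph{external} (spatial) tensor product of correspondences and then to upgrade it to a symmetric monoidal structure by recognizing all of the coherence data as induced from the symmetric monoidal $1$-category of $\mathrm{C}^*$-algebras, so that no full bicategorical coherence axiom has to be checked by bare hands. Concretely, given a $B$--$A$-correspondence $(\Ecal,\phi)$ and a $B'$--$A'$-correspondence $(\Ecal',\phi')$, I would form the external tensor product Hilbert module $\Ecal\otimes\Ecal'$ over $A\otimes A'$, whose inner product is determined on elementary tensors by $\langle \xi\otimes\xi',\eta\otimes\eta'\rangle=\langle\xi,\eta\rangle\otimes\langle\xi',\eta'\rangle$ and which is completed in the induced norm. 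The adjointable operators $\phi(b)\otimes 1$ and $1\otimes\phi'(b')$ have commuting images, so the universal property of the maximal tensor product recalled above yields a unique nondegenerate homomorphism $\phi\otimes\phi'\colon B\otimes B'\to\Lcal_{A\otimes A'}(\Ecal\otimes\Ecal')$, exhibiting $\Ecal\otimes\Ecal'$ as a $B\otimes B'$--$A\otimes A'$-correspondence. On intertwiners, $u\otimes u'$ is defined on elementary tensors and extended by continuity; functoriality, compatibility with the adjoint, and preservation of nondegeneracy are then routine.

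Next I would supply the pseudofunctoriality data. The essential $2$-cell is the \emph{interchange} unitary relating the external product with the Rieffel composition: for composable correspondences there is a natural isomorphism
\[
(\Dcal\otimes\Dcal')\otimes_{B\otimes B'}(\Ecal\otimes\Ecal')\;\cong\;(\Dcal\otimes_{B}\Ecal)\otimes(\Dcal'\otimes_{B'}\Ecal'),
\]
given on elementary tensors by $(\eta\otimes\eta')\otimes(\xi\otimes\xi')\mapsto(\eta\otimes\xi)\otimes(\eta'\otimes\xi')$, which one checks descends to the balanced tensor products and is isometric directly from the defining inner products. Together with the evident identification of identity correspondences under the external product, these furnish the compositor and unitor making $-\otimes-\colon\MorC\times\MorC\to\MorC$ a homomorphism of bicategories; its coherence axioms reduce to associativity and naturality of the Rieffel product and of the external product.

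For the monoidal structure itself I would avoid exhibiting the associator, unitors, braiding, pentagonator, hexagonators and syllepsis of a symmetric monoidal bicategory one by one. Instead I would assemble the above into a pseudo double category $\mathbb{D}$ whose objects are $\mathrm{C}^*$-algebras, whose tight morphisms are (nondegenerate) $*$-homomorphisms, whose loose morphisms are correspondences, and whose squares are intertwiners over a pair of tight legs; its underlying bicategory is exactly $\MorC$. The fact recorded above, that every homomorphism $B\to A$ produces a correspondence on the trivial module, exhibits companions and conjoints, so $\mathbb{D}$ is fibrant. On $\mathbb{D}$ the maximal tensor product is a genuine symmetric monoidal structure in the tight direction, since associativity, the unit $\CC$, and the flip are honest $*$-isomorphisms with strict coherence, while the external product of the previous paragraphs provides the loose and square directions. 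Invoking Shulman's transfer theorem, that a symmetric monoidal fibrant double category induces a symmetric monoidal bicategory on its underlying bicategory, then yields the desired structure on $\MorC$, with every coherence cell induced from the corresponding $*$-isomorphism of $\mathrm{C}^*$-algebras.

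The genuinely new content, and the \textbf{main obstacle}, is the construction and well-definedness of the external tensor product of correspondences together with the interchange unitary, that is, checking that the external and internal tensor products commute coherently; everything that is strictly monoidal already lives at the level of $\mathrm{C}^*$-algebras. The double-category route trades the formidable list of symmetric monoidal bicategory axioms for the much lighter symmetric monoidal double category and fibrancy axioms, where the only nontrivial $2$-cells to track are the interchange unitary and the unit comparisons. Should one prefer to bypass this machinery, the hands-on alternative is to take the coherence equivalences to be the invertible correspondences attached to the $*$-isomorphisms $(A\otimes B)\otimes C\cong A\otimes(B\otimes C)$, $A\otimes\CC\cong A$ and $A\otimes B\cong B\otimes A$, and the coherence modifications to be the canonical unitary intertwiners; each axiom then collapses to a diagram of $*$-isomorphisms already commuting in the symmetric monoidal category of $\mathrm{C}^*$-algebras, so that the cost is bookkeeping rather than conceptual difficulty.
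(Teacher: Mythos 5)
Your construction of the functor itself --- the external tensor product of Hilbert modules with inner product $\langle \xi\otimes\xi',\eta\otimes\eta'\rangle=\langle\xi,\eta\rangle\otimes\langle\xi',\eta'\rangle$, the left action obtained from the universal property of the maximal tensor product applied to the commuting images of $\phi(-)\otimes 1$ and $1\otimes\phi'(-)$, and the interchange unitary relating the external product to the Rieffel composition --- is exactly the paper's proof, including the identification of the interchange $2$-cell as the one piece requiring an actual computation. Where you diverge is in how the symmetric monoidal coherence is packaged: the paper takes what you call the ``hands-on alternative'', observing that the maximal tensor product is already a monoidal structure on the $1$-category of $\mathrm{C}^*$-algebras and $*$-homomorphisms, so that the associator and unitors of $\MorC$ can be chosen to be correspondences induced by honest $*$-isomorphisms, the pentagon then holds on the nose, and the pentagonator and the unitor coherences can be filled by identity $2$-cells. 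Your primary route through a symmetric monoidal double category and Shulman's transfer theorem is a legitimate and arguably cleaner way to organize the same observation, but one claim needs correcting: a general nondegenerate $*$-homomorphism $B\to A$ only produces a \emph{companion} (the trivial right Hilbert $A$-module $A$ with left $B$-action), not a conjoint --- the conjoint would be an $A-B$-correspondence, which requires a $B$-valued inner product on $A$ that does not exist in general --- so the double category with all $*$-homomorphisms as tight morphisms is not fibrant. This is harmless for your argument: restricting the tight direction to $*$-isomorphisms (which do have both companions and conjoints, via the inverse) yields a fibrant symmetric monoidal double category with the same underlying horizontal bicategory, and the transfer theorem then applies. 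Either packaging buys the same thing, namely avoiding a direct verification of the full list of symmetric monoidal bicategory axioms, and both rest on the same two computations: the interchange unitary and the strictness of the coherence data at the level of algebras.
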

\begin{proof}
This appears partially as \cite[Thm. 5.8]{Meyer}. The functor at the level of 1-morphisms is given by the external maximal tensor product of $\mathrm{C}^*$-correspondences. This is constructed as follows. Let $A, A'$ and $B, B'$ be $\mathrm{C}^*$-algebras and $(\Ecal, \phi)\in \Corr(A, A')$ and $(\Dcal, \psi)\in\Corr(B,B')$. The external tensor product of the Hilbert modules $\Ecal$ and $\Dcal$ is given by the completion of the right $A'\odot B'$-module $\Ecal\odot \Dcal$ with respect to the norm induced by the inner product
\[
\langle \xi_1\odot \eta_1, \xi_2\odot \eta_2 \rangle := \langle \xi_1,\xi_2\rangle_{A'}\odot \langle \eta_1, \eta_2\rangle_{B'},
\]
where we denote as a subscript where the inner product takes values. See \cite[Prop. 5.7]{Meyer} for the complete construction. We have a canonical map $A\to \Lcal_{A'\otimes B'}(\Ecal\otimes \Dcal)$ given by $a\mapsto (e\otimes d\mapsto \phi(a)e\otimes d)$, and similarly for $B\to \Lcal_{A'\otimes B'}(\Ecal\otimes \Dcal)$. These morphisms have commuting images and hence by the universal property of the maximal tensor product they induce a map $A\otimes B\to \Lcal_{A'\otimes B'}(\Ecal\otimes \Dcal)$. The procedure outlined above produces an $(A\otimes B)-(A'\otimes B')$-correspondence that we will denote
\[
(\Ecal, \phi)\otimes (\Dcal, \psi) = (\Ecal\otimes \Dcal, \phi\otimes \psi).
\]

Given two 2-morphisms $u:(\Ecal, \phi)\to (\Ecal', \phi')$ and $v:(\Dcal, \psi)\to (\Dcal', \psi')$, by the same procedure as above we obtain a morphism $u\otimes v: \Ecal\otimes \Dcal\to \Ecal'\otimes \Dcal'$ extending $u\odot v$, which is clearly a morphism of $\mathrm{C}^*$-correspondences.

Note that, in order to define $-\otimes -$ as a functor of bicategories, we need to further provide a 2-morphism witnessing the compatibility of the Rieffel tensor product with $-\otimes -$. That is, for $A,A',B,B',C,C'\in \MorC$ and 
\[
\Ecal\in \Corr(C,B),\hspace{1cm}\Ecal'\in\Corr(C',B')\hspace{1cm}\Dcal\in\Corr(B, A), \hspace{1cm}\Dcal'\in\Corr(B',A'),
\]
we need an intertwiner
\begin{equation}\label{eq: RieffelVSTensor}
(\Ecal\otimes \Ecal')\otimes_{B\otimes B'}(\Dcal\otimes \Dcal')\xrightarrow{\cong}(\Ecal\otimes_B\Dcal)\otimes (\Ecal'\otimes_{B'}\Dcal').
\end{equation}
 The flip map $\Ecal'\odot \Dcal\cong \Dcal\odot \Ecal'$ induces an isomorphism
 \(
 (\Ecal\odot \Ecal')\odot_{(B\odot B')}(\Dcal\odot \Dcal')\cong (\Ecal\odot_B\Dcal)\odot (\Ecal'\odot_{B'}\Dcal')
 \)
 which is easily seen to be continuous with respect to the norms on the left and the right-hand sides of \eqref{eq: RieffelVSTensor}. These morphisms are natural and hence produce the needed 2-cell. There is an analogous 2-cell witnessing compatibility on identities. For the associator, note that the maximal tensor product already endows the 1-category of $\mathrm{C}^*$-algebras and $\mathrm{C}^*$-algebra homomorphisms with a monoidal structure \cite[II.9.2.6]{OAbook}. Hence, one can write an associator for $-\otimes -$ on $\MorC$ which consists of correspondences coming from $\mathrm{C}^*$-algebra isomorphisms. Then, the pentagon equation is satisfied on the nose and one can choose the pentagonator to be the canonical reparenthesization in the Rieffel tensor product. The same holds for unitor data. The flip map $A\otimes B\cong B\otimes A$ induces a symmetric unitary braiding on $\MorC$.
\end{proof}

\subsection{Definition and first examples}

In this section, we define continuous tensor categories as algebra objects in $\MorC$. Given a monoidal bicategory $(\mathfrak{C}, \otimes)$ with unit $\mathbf{1}$, an \emph{algebra object in $\mathfrak{C}$} consists of an object $X\in \mathfrak{C}$ equipped with a multiplication 1-morphism $m: X\otimes X\to X$ and a unit morphism $u:\mathbf{1}\to X$ together with invertible 2-cells
\[\begin{tikzcd}
	{X\otimes X\otimes X} && {X\otimes X} & X && {X\otimes X} && X \\
	{X\otimes X} && X &&& X
	\arrow["{m\otimes\id_X}", from=1-1, to=1-3]
	\arrow["{\id_X\otimes m}"', from=1-1, to=2-1]
	\arrow["\alpha"', shorten <=11pt, shorten >=16pt, Rightarrow, from=1-3, to=2-1]
	\arrow["m", from=1-3, to=2-3]
	\arrow["{u\otimes\id_X}", from=1-4, to=1-6]
	\arrow[""{name=0, anchor=center, inner sep=0}, "{\id_X}"', from=1-4, to=2-6]
	\arrow["m"', from=1-6, to=2-6]
	\arrow["{\id_X\otimes u}"', from=1-8, to=1-6]
	\arrow[""{name=1, anchor=center, inner sep=0}, "{\id_X}", from=1-8, to=2-6]
	\arrow["m"', from=2-1, to=2-3]
	\arrow["\lambda"', shorten >=4pt, Rightarrow, from=1-6, to=0]
	\arrow["\rho", shorten >=4pt, Rightarrow, from=1-6, to=1]
\end{tikzcd}\]
called the associator and the left and right unitors respectively. The 2-cells are required to satisfy their own coherence conditions known as the pentagon and triangle diagrams, see \cite[Sec. 3]{Pseudomonoids}. A morphism between algebra objects $(X,m,u,\alpha,\lambda, \rho)$ and $(X',m',u',\alpha',\lambda', \rho')$ is a morphism $f:X\to X'$ in $\mathfrak{C}$ together with invertible 2-cells $s: m'\circ(f\otimes f)\xrightarrow{\cong}f\circ m$ and $u'\xrightarrow{\cong} f\circ u$ which are compatible with the associators and with left and right unitors. A 2-morphism between $(f,s)$ and $(f',s')$ is a 2-morphism $\eta:f\to f'$ compatible with $s$ and $s'$. For $\mathfrak{C} = \MorC$, we require algebra objects to have unitary associators and unitary unitors, and call these unitary algebra objects. Algebra morphisms and 2-morphisms between unitary algebra objects are also required to have unitary structure data.

\begin{definition}\label{def: ctstensorcats}
    The \emph{bicategory of continuous ``semisimple'' tensor categories} is the bicategory of (unitary) algebra objects in $(\MorC, \otimes)$. We call 1- and 2-morphisms between continuous tensor categories \emph{continuous tensor functors} and \emph{continuous monoidal natural transformations} respectively.
\end{definition}

Let us make explicit the data of an algebra object in $\MorC$. A continuous tensor category consists of
\begin{enumerate}
    \item A $\mathrm{C}^*$-algebra $A\in \MorC$,
    \item An $A-A\otimes A$-correspondence $(\Tcal, \tau)$ that encodes the tensor product,
    \item An $A-\CC$-correspondence $\1$, that is, an $A$-representation $\1$ that encodes the unit,
    \item A unitary intertwiner $\alpha: (\Tcal, \tau)\otimes_{A\otimes A}\big((\Tcal, \tau)\otimes A\big)\xrightarrow{\cong} (\Tcal, \tau)\otimes_{A\otimes A}\big(A\otimes (\Tcal, \tau)\big)$ that encodes the associator,
    \item Left and right unitors given by unitary intertwiners $\lambda: (\Tcal, \tau)\otimes_{A\otimes A}(A\otimes \1)\xrightarrow{\cong} A$ and $\rho: (\Tcal, \tau)\otimes_{A\otimes A}(\1\otimes A)\xrightarrow{\cong} A$.
\end{enumerate}
All this data satisfies the pentagon and triangle identities. The underlying linear category of a continuous tensor category is naturally a $\mathrm{W}^*$-tensor category, defined as follows.

\begin{definition}
    A $\mathrm{W}^*$\emph{-tensor category} is a $\mathrm{W}^*$-category $T$ with a monoidal structure whose tensor functor
    \[
    \otimes:T\times T\to T
    \]
    is a bilinear functor of $\mathrm{W}^*$-categories and whose associators and unitors are unitary. 
\end{definition}
A tensor functor between two $\mathrm{W}^*$-tensor categories $T$ and $S$ is a $\mathrm{W}^*$-functor $F:T\to S$ together with unitary coherences between the tensor product on $T$ and on $S$ and also between the image of the unit in $T$ and the unit in $S$, see \cite{henriques2024completewcategories}. A monoidal natural transformation between two $\mathrm{W}^*$-tensor functors is a natural transformation intertwining the coherence data.

Given a continuous semisimple tensor category, we can produce the underlying $\mathrm{W}^*$-tensor category via the forgetful functor~$\Ffrak$.

\begin{proposition}\label{Prop: ForgetE1}
    The functor $\Ffrak:\MorC\to\Wcat$ provides an assignment from continuous tensor categories, continuous tensor functors and continuous monoidal natural transformations to $\mathrm{W}^*$-tensor categories, $\mathrm{W}^*$-tensor functors and monoidal natural transformations.
\end{proposition}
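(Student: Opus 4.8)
The plan is to realise $\Ffrak$ as a (symmetric) monoidal pseudofunctor from $(\MorC,\otimes)$ to $\Wcat$ and then invoke the general principle that monoidal pseudofunctors carry pseudomonoids, their morphisms and their $2$-morphisms to the corresponding structures in the target. Since a continuous tensor category is by definition a pseudomonoid in $(\MorC,\otimes)$, and since the data of a $\mathrm{W}^*$-tensor category (the bilinear tensor functor $\otimes\colon T\times T\to T$, the unit, and the unitary associators and unitors) is exactly the unpacked data of a pseudomonoid in $\Wcat$, the statement reduces to equipping $\Ffrak$ with coherent monoidal structure and then transporting the algebra data of an object $(A,(\Tcal,\tau),\1,\alpha,\lambda,\rho)$ piece by piece.

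First I would produce the monoidal comparison data for $\Ffrak$. For $\mathrm{C}^*$-algebras $A,B$, the external tensor product of representations defines a bilinear $*$-functor
\[
\Rep(A)\times\Rep(B)\longrightarrow\Rep(A\otimes B),\qquad (H,K)\longmapsto H\otimes K,
\]
where the commuting actions of $A$ and $B$ on the Hilbert-space tensor product $H\otimes K$ assemble into an $A\otimes B$-action by the universal property of the maximal tensor product; the unit comparison is the identification $\Rep(\CC)\simeq\Hilb$. The $2$-cell witnessing compatibility of this comparison with the Rieffel tensor product is the isometric isomorphism
\[
(\Ecal\otimes\Dcal)\otimes_{A\otimes A'}(H\otimes K)\;\xrightarrow{\ \cong\ }\;(\Ecal\otimes_A H)\otimes(\Dcal\otimes_{A'}K),
\]
for $\Ecal\in\Corr(B,A)$, $\Dcal\in\Corr(B',A')$, $H\in\Rep(A)$, $K\in\Rep(A')$; this is precisely the special case of \eqref{eq: RieffelVSTensor} in Proposition~\ref{prop: maximaltensor} in which the outgoing algebras are $\CC$, induced by the flip on elementary tensors and isometric for the inner products in play.

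With this comparison in hand I would define the tensor product on $\Rep(A)$ as the composite bifunctor
\[
\Rep(A)\times\Rep(A)\longrightarrow\Rep(A\otimes A)\xrightarrow{\ \Ffrak(\Tcal)\ }\Rep(A),\qquad (H,K)\longmapsto \Tcal\otimes_{A\otimes A}(H\otimes K),
\]
with unit object $\Ffrak(\1)$. This is bilinear on morphisms because the external tensor product is bilinear on intertwiners and $\Ffrak(\Tcal)$ is a $\mathrm{W}^*$-functor. The associator and unitors of the resulting $\mathrm{W}^*$-tensor category are the images $\Ffrak(\alpha),\Ffrak(\lambda),\Ffrak(\rho)$ pasted with the comparison cells above; they are unitary because $\Ffrak$ is compatible with the dagger structure on $2$-morphisms — an adjointable intertwiner $u$ satisfies $\Ffrak(u)^\ast=\Ffrak(u^\ast)$, so unitaries map to unitaries — and because the comparison cells are themselves isometric. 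The pentagon and triangle for the image then follow from those of $(A,\dots)$ together with the coherences of the monoidal structure on $\Ffrak$. A morphism $(f,s)$ of algebra objects maps to a $\mathrm{W}^*$-tensor functor built from $\Ffrak(f)$ and $\Ffrak(s)$ pasted with comparisons, and an algebra $2$-morphism maps to a monoidal natural transformation; that these remain compatible with the coherence data is automatic since $\Ffrak$ is a functor of bicategories, faithful on $2$-morphisms by Proposition~\ref{prop: functorF}.

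I expect the main obstacle to be verifying that the external tensor product really equips $\Ffrak$ with the \emph{coherent} data of a monoidal pseudofunctor, i.e.\ checking the many compatibility $2$-cells relating the comparison above with the associators and unitors of both $\MorC$ and $\Wcat$. Because every comparison cell is assembled from flip maps and Rieffel tensor products and is isometric on elementary tensors, each of these coherences reduces to the same continuity-and-isometry bookkeeping already carried out in the proof of Proposition~\ref{prop: maximaltensor}, so the difficulty is organisational rather than conceptual. A secondary point worth isolating is the matching of unitarity: one must confirm that $\Ffrak$ intertwines the $*$-structures, so that unitary associators and unitors in $\MorC$ are sent to unitary ones in $\Wcat$, which is the identity $\Ffrak(u^\ast)=\Ffrak(u)^\ast$ noted above.
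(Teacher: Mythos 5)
Your proposal is correct and follows essentially the same route as the paper: the paper likewise defines the tensor product as the composite $\Rep(A)\times\Rep(A)\to\Rep(A\otimes A)\xrightarrow{\Ffrak(\Tcal)}\Rep(A)$ via the universal property of the maximal tensor product, obtains the associator by pasting $\Ffrak(\alpha)$ with the canonical comparison $(H\otimes K)\otimes R\cong H\otimes(K\otimes R)$, and handles tensor functors by pasting $\Ffrak(\eta)$ with the $2$-cell of \eqref{eq: RieffelVSTensor}. Your framing in terms of $\Ffrak$ being a monoidal pseudofunctor transporting pseudomonoids just makes explicit the general principle the paper leaves implicit.
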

\begin{proof}
    Let $(A, \Tcal, \1, \alpha, \lambda, \rho)$ be a continuous tensor category. We can define a bilinear $\mathrm{W}^*$-functor
    \[
    -\otimes -: \Rep(A)\times \Rep(A)\to \Rep(A\otimes A)\xrightarrow{\Ffrak(\Tcal)} \Rep(A),
    \]
    where the first functor takes two $A$-representations $H$ and $K$ and constructs the $(A\otimes A)$-representation on $H\otimes K$, through the universal property of the maximal tensor product. For the associator, we use $\Ffrak(\alpha)$ together with the canonical natural transformations $(H\otimes K)\otimes R\cong H\otimes (K\otimes R)$ in $\Rep(A)$. The rest of the data trivially carries over and so do the various coherence identities by functoriality of $\Ffrak$. Given a tensor functor $(\Scal, \eta): (A_1, \Tcal_1)\to (A_2, \Tcal_2)$, we construct the $\mathrm{W}^*$-tensor functor $\Ffrak(\Scal):\Rep(A_1)\to \Rep(A_2)$ equipped with the natural isomorphism
\[\begin{tikzcd}
	{\Rep(A_1)\times\Rep(A_1)} & {\Rep(A_1\otimes A_1)} && {\Rep(A_1)} \\
	{\Rep(A_2)\times\Rep(A_2)} & {\Rep(A_2\otimes A_2)} && {\Rep(A_2)},
	\arrow[from=1-1, to=1-2]
	\arrow["{\Ffrak(\Scal)\times \Ffrak(\Scal)}"', from=1-1, to=2-1]
	\arrow["{\Ffrak(\Tcal_1)}", from=1-2, to=1-4]
	\arrow[shorten <=10pt, shorten >=10pt, Rightarrow, from=1-2, to=2-1]
	\arrow["{\Ffrak(\Scal\otimes\Scal)}"', from=1-2, to=2-2]
	\arrow["{\Ffrak(\eta)}"{description}, shorten <=10pt, shorten >=10pt, Rightarrow, from=1-4, to=2-2]
	\arrow["{\Ffrak(\Scal)}", from=1-4, to=2-4]
	\arrow[from=1-4, to=2-4]
	\arrow[from=2-1, to=2-2]
	\arrow["{\Ffrak(\Tcal_2)}"', from=2-2, to=2-4]
\end{tikzcd}\]
where the left-hand side invertible 2-morphism is induced by the intertwiner in \eqref{eq: RieffelVSTensor}. A continuous monoidal natural transformation $\omega$ clearly defines a monoidal natural transformation $\Ffrak(\omega).$
\end{proof}

\begin{example}[$\Hilb\,G$]\label{ex: HilbG}
    Let $G$ be a locally compact group and consider the $\mathrm{C}^*$-algebra $C_0(G)$ of continuous functions on $G$ vanishing at infinity. Then $C_0(G)\otimes C_0(G) \cong C_0(G\times G)$ and we can construct a $C_0(G)-C_0(G\times G)$-correspondence on the trivial $C_0(G\times G)$-Hilbert module $C_0(G\times G)$. The left $C_0(G)$-action is given by $(g\cdot f)(x,y) = g(xy)f(x,y)$ for all $g\in C_0(G)$ and $f\in C_0(G\times G)$. Taking the obvious isomorphism $C_0((G\times G)\times G)\cong C_0(G\times (G\times G))$ we obtain a continuous tensor category which we still denote by $C_0 (G)$. Then, $\Hilb\,G:=\Ffrak(C_0(G))$ is the $\mathrm{W}^*$-tensor category whose objects are direct integrals of skyscraper sheaves of Hilbert spaces on $G$, with tensor product given by convolution.
\end{example}

The previous example can be generalized to allow for a twist in the tensor product and the associator, in analogy to the fusion categories $\Vec^\omega G$ for $G$ finite and $\omega\in H^3(BG;\mathbb{C}^\times)$ a class in group cohomology. When $G$ is a locally compact topological group, the relevant cohomology theory is Segal-Mitchison cohomology, which we recall for convenience. Let $\Bcal G$ be the simplicial topological space whose $n$-simplices are $\Bcal G_n := G^n$ and whose face maps are given by
\[
d_i: (g_1,\ldots, g_n)\mapsto \begin{cases}
     (g_2,\ldots, g_n),& i = 0 \\
     (g_1,\ldots, g_ig_{i+1},\ldots ,g_n) & 0<i<n\\
      (g_1,\ldots, g_{n-1}) & i = n.
\end{cases}
\]
The degeneracy maps $s_i: G^n\to G^{n+1}$ are given by including the identity $e\in G$. A simplicial cover $Y_\bullet\twoheadrightarrow \Bcal G$ is a simplicial topological space $Y_\bullet$ together with covering maps $Y_n \to \Bcal G_n$ commuting with the face maps. Associated to a simplicial cover $Y_\bullet\twoheadrightarrow \Bcal G$ there is a simplicial double complex whose $(p,q)$-th entry is $C(Y_q^{[p]}, U(1))$, the space of continuous functions from the $p$-fold product of $Y_q$ over $G^q$, denoted $Y_q^{[p]}$,  into $U(1)$. The vertical differential is the \v{C}ech differential $\check{d}$ and the horizontal differential is the alternating sum of the pullback maps 
\[
d^*_i:C(Y_q^{[p]}, U(1))\to C(Y_{q+1}^{[p]}, U(1)),
\]
where we also denote by $d_i$ the horizontal maps of the simplicial space $Y_\bullet$. We write $H^\bullet_{SM}(\mathscr{B}G; U(1); Y_\bullet)$ for the cohomology of the totalization of the double complex $\big(C(Y^{[*]}_{*}, U(1)), d_i,\check{d} \big)$.

A simplicial cover is called locally finite if each cover is locally finite. The set of locally finite simplicial covers is a poset with partial order given by refinement. The Segal-Mitchison cohomology of $G$ with coefficients in $U(1)$ is 
\[
H^\bullet_{SM}(\mathscr{B}G; U(1)) := \varinjlim\limits_{Y_\bullet\to \mathscr{B}
G} H^\bullet_{SM}(\mathscr{B}G; U(1); Y_\bullet),
\]
where the direct limit runs over the poset of locally finite simplicial covers of $\mathscr{B}G$.

Classes in Segal-Mitchison cohomology induce continuous tensor categories via the following construction. Let $X$ be a paracompact space with a locally finite cover $Y\to X$. A continuous map $\lambda: Y^{[3]}\to U(1)$ satisfying $\check{d}\lambda = 0$ represents a class in the second \v{C}ech cohomology of $X$. In addition, $\lambda$ can be assumed to be alternating without altering its cohomology class \cite[Prop. 4.41]{ContinuousTrace}. Then, we obtain a $\mathrm{C}^*$-algebra with spectrum $X$ as follows, see \cite[Sec. 5.3]{ContinuousTrace}. The vector space $C_c(Y_1^{[2]})$ of continuous compactly supported $\CC$-valued functions on $Y_1^{[2]}$ admits an associative multiplication given~by
    \[
    (f*g)(x,z) = \sum\limits_{(x,y,z)\in Y_1^{[3]}}f(x,y)g(y,z)\overline{\lambda(x,y,z)}
    \]
    and an involution $f\mapsto f^*$ defined by $f^*(x,y) = \overline{f(y,x)}$. This algebra can be completed with respect to a certain norm to produce a $\mathrm{C}^*$-algebra we denote by $C_0(X, \lambda)$, and whose space of irreducible representations is homeomorphic to $X$, see \cite[Ex. 5.20]{ContinuousTrace}. Cohomologous cocycles in \v{C}ech cohomology give Morita equivalent $\mathrm{C}^*$-algebras. Indeed, let $Z\to X$ be another locally finite cover and $\nu: Z^{[3]}\to U(1)$ another continuous function with $\check{d}\nu = 0$. Denote $W : = Y\times_X Z$ and let $\mu: W^{[2]}\to U(1)$ be a continuous function such that 
    \begin{equation}\label{eq: cocycleeq}
    \check{d}\mu = (p_Y^*\lambda)(p_Z^*\nu)^{-1}
    \end{equation}
    for $p_Y: W^{[3]}\to Y^{[3]}$ and $p_Z: W^{[3]}\to Z^{[3]}$ the projections. Let $C_c(W)$ denote the vector space of continuous compactly supported functions on $W$. Then, $C_c(W)$ is a $C_c(Y^{[2]})-C_c(Z^{[2]})$-bimodule as follows. For $f\in C_c(Y^{[2]})$, $g\in C_c(Z^{[2]})$ and $\xi\in C_c(W)$, we define
    \begin{align*}
    (f\xi)(y,z) &= \sum\limits_{(y,y')\in Y^{[2]}} f(y,y')\xi(y',z)\overline{\mu((y,z), (y',z))}\\(\xi g)(y,z) &= \sum\limits_{(z',z)\in Z^{[2]}} \xi(y,z')g(z',z)\overline{\mu((y,z'), (y,z))}
    \end{align*}
    for every $(y,z)\in W$. In addition, we define maps $_Y\langle - , -\rangle: C_c(W)\times C_c(W)\to C_c(Y^{[2]})$ and $\langle - , -\rangle_Z: C_c(W)\times C_c(W)\to C_c(Z^{[2]})$ by 
\begin{align*}
_Y\langle \xi , \zeta\rangle(y_1, y_2) &= \sum\limits_{(y_1, z)\in W} \xi(y_1, z)\overline{\zeta(y_2, z)}\mu((y_1, z), (y_2, z))\\ 
\langle \xi , \zeta\rangle_Z(z_1, z_2) &= \sum\limits_{(y, z_1)\in W} \overline{\xi(y, z_1)}{\zeta(y, z_2)}\mu((y, z_1), (y, z_2))
\end{align*}
for all $(y_1,y_2)\in Y^{[2]}$ and $(z_1, z_2)\in Z^{[2]}$. It is a routine check that Equation \eqref{eq: cocycleeq} implies that, with the structure above, $C_c(W)$ is a pre-Hilbert module for both $C_c(Y^{[2]})$ (on the left) and $C_c(Z^{[2]})$ (on the right). In addition, both actions commute. Since both $Y,Z\to X$ are covers, the maps $W\to Y$ and $W\to Z$ are surjective. Since $X$ is paracompact, we can find a locally finite partition of unity subordinate to $Y$ and, for each $(y_1,y_2)\in Y^{[2]}$, a $z\in Z$ such that $(y_i,z)\in W$. Taking bump functions supported near the points $(y_1,z)$ and $(y_2,z)$ of $W$, we obtain functions $\xi, \zeta\in C_c(W)$ with $_Y\langle \xi, \zeta\rangle$ approximating any given compactly supported function $f\in C_c(Y^{[2]})$ arbitrarily well. Hence, the linear span of $_Y\langle-,-\rangle$ is dense in $C_0(X, \lambda)$. A similar argument shows that the linear span of $\langle-,-\rangle_Z$ is dense in $C_0(X,\nu)$. Hence, by \cite[Lem. 2.16]{ContinuousTrace}, $C_c(W)$ can be completed to an invertible $C_0(X,\lambda)-C_0(X, \nu)$-correspondence $\mathcal{M}_\mu$. If the cocycle $\lambda$ is trivial in \v{C}ech cohomology, then $C_0(X, \lambda)$ is Morita equivalent to $C_0(X)$, hence justifying our notation.

If $\mu': W^{[2]}\to U(1)$ is another continuous map satisfying $\check{d}\mu' = (p_Y^*\lambda)(p_Z^*\nu)^{-1}$, and $\omega: W\to U(1)$ is a continuous map such that $\check{d}\omega = \mu(\mu')^{-1}$, the completion of
\[
\begin{array}{ccc}
    C_c(W) &\to &C_c(W)  \\
    \xi & \mapsto &\overline{\omega}\xi
\end{array}
\]
is a unitary intertwiner between the $C_0(X, \lambda)-C_0(X, \nu)$-correspondences $\mathcal{M}_\mu$ and $\mathcal{M}_{\mu'}.$ This construction can be straightforwardly generalized to the case where $\mu$ and $\mu'$ are defined on different covers $W$ and $W'$, and $\omega$ is defined on the refinement $W\times_XW'$.

\begin{example}[$\Hilb^\omega G$]
\label{ex: HilbOmega}
    Let $G$ be a locally compact group and $\omega\in H_{SM}^3(\Bcal G; U(1))$ a class in Segal-Mitchison cohomology. Then, there is a locally finite simplicial cover $Y_\bullet\to \Bcal G$ so that the class $\omega$ is represented by a triple of continuous functions
    \[
    \lambda: Y_1^{[3]}\to U(1) \hspace{1cm} \mu: Y_2^{[2]}\to U(1)\hspace{1cm}\Omega: Y_3\to U(1).
    \]
    These can be assumed to be normalized, meaning that their pullbacks along degeneracy maps $s_i$ are trivial. These functions satisfy
    \begin{align*}
        \check{d}\lambda &= 1\\
        \check{d}\mu &= d_0^*\lambda\cdot d_1^*\lambda^{-1}\cdot d_2^*\lambda\\
        \check{d}\Omega & = d_0^*\mu^{-1}\cdot d_1^*\mu\cdot d_2^*\mu^{-1}\cdot d_3^*\mu\\
         d_1^*\Omega\cdot d_3^*\Omega & = d_0^*\Omega\cdot d_2^*\Omega\cdot d_4^*\Omega.
    \end{align*}
    The condition for $\lambda$ on $Y_1^{[4]}$ is exactly that it defines a cocycle in the \v{C}ech cohomology of $G$ relative to the cover $Y_1$, and we can further assume that $\lambda$ is alternating, see \cite[Prop. 4.41]{ContinuousTrace}. As described above, such a cocycle defines a $\mathrm{C}^*$-algebra $C_0(G, \lambda)$. 
    
    We next produce a $C_0(G,\lambda)-C_0(G,\lambda)\otimes C_0(G,\lambda)$-correspondence witnessing the tensor product. It holds that $C_0(G,\lambda)\otimes C_0(G,\lambda)\cong C_0(G\times G,p_1^*\lambda \cdot p_2^* \lambda)$ by nuclearity of $C_0(G,\lambda)$ (see for example \cite[Cor. B.44]{ContinuousTrace}), where the right-hand side is the $\mathrm{C}^*$-algebra constructed as above for the data $Y_1\times Y_1\to G\times G$ with cocycle
   \[
    Y_1^{[3]}\times Y_1^{[3]}\xrightarrow{\lambda\times\lambda} U(1)\times U(1)\xrightarrow{\mathrm{m}} U(1),
    \]
    for $\mathrm{m}$ the multiplication on $U(1)$. We first need to find a suitable cohomologous cocycle to $p_1^*\lambda \cdot p_2^*\lambda$ with respect to the cover $Y_2\to G\times G$. We take the cocycle 
    \[
    Y_2^{[3]}\xrightarrow{(d_0, d_2)}Y_1^{[3]}\times Y_1^{[3]}\xrightarrow{\lambda\times \lambda} U(1)\times U(1)\xrightarrow{\mathrm{m}}U(1),
    \]
    which we denote by $d_0^*\lambda \cdot d_2^*\lambda$, and which produces another $\mathrm{C}^*$-algebra $C_0(G\times G, d_0^*\lambda\cdot d_2^*\lambda)$. The fact that the cocycles $p_1^*\lambda\cdot p_2^*\lambda$ and $d_0^*\lambda\cdot d_2^*\lambda$ are canonically cohomologous produces a canonical invertible correspondence $\mathcal{E}$ between $C_0(G\times G, d_0^*\lambda \cdot d_2^*\lambda)$ and $C_0(G\times G, p_1^*\lambda\cdot p_2^*\lambda)$. Similarly, the cocycle $\lambda$ on $G$ can be pulled back to a cocycle $m^*\lambda$ on $G\times G$ with respect to the cover $m^*Y_1\to G\times G$. The cohomology class of $m^*\lambda$ can also be represented with respect to the cover $Y_2\to G\times G$ by the cocycle $d_1^*\lambda:= \lambda\circ d_1$. These produce two new $\mathrm{C}^*$-algebras $C_0(G\times G, m^*\lambda)$ and $C_0(G\times G, d_1^*\lambda)$, which are equivalent in $\MorC$ via a canonical invertible correspondence $\Dcal$.

    The compatibility condition between $\mu$ and $\lambda$ on $Y_2^{[3]}$ is exactly that $\mu$ is a witness of the fact that $d_0^*\lambda\cdot d_2^*\lambda$ and $d_1^*\lambda$ are cohomologous. This provides an invertible $\mathrm{C}^*$-correspondence $\mathcal{M}_\mu$ between $C_0(G\times G, d_1^*\lambda)$ and $C_0(G\times G, d_0^*\lambda\cdot d_2^*\lambda)$ which depends on $\mu$. Finally, there is a $C_0(G,\lambda)-C_0(G\times G, m^*\lambda)$-correspondence on the trivial $C_0(G\times G, m^*\lambda)$-Hilbert module $C_0(G\times G, m^*\lambda)$ with left $C_0(G, \lambda)$-action
   induced by the multiplication on $G$. Composing these four correspondences, we obtain the $C_0(G, \lambda)-C_0(G,\lambda)\otimes C_0(G, \lambda)$-correspondence 
    \[
    \mathcal{T}_{\lambda, \mu}:=C_0(G\times G, m^*\lambda)\otimes_{C_0(G\times G, m^*\lambda)}\mathcal{D}\otimes_{C_0(G\times G, d_1^*\lambda)}\mathcal{M}_\mu\otimes_{C_0(G\times G, d_0^*\lambda \cdot d_2^*\lambda)}\mathcal{E},
    \]
    which encodes the tensor product. Pulling these correspondences further back to $G\times G\times G$, we obtain two different $C_0(G,\lambda)-C_0(G,\lambda)^{\otimes 3}$-correspondences, namely 
    \[
    \mathcal{T}_{\lambda, \mu}\otimes_{C_0(G,\lambda)^{\otimes 2}}(\mathcal{T}_{\lambda, \mu}\otimes C_0(G,\lambda))
    \]
    and
      \[
      \mathcal{T}_{\lambda, \mu}\otimes_{C_0(G,\lambda)^{\otimes 2}}( C_0(G,\lambda)\otimes \mathcal{T}_{\lambda, \mu}).
    \]
Pulling back all the relevant cocycles in these expressions to the common cover $Y_3\to G^3$ and making the canonical refinement identifications, the former correspondence is controlled by $d_1^*\mu\cdot d^*_3\mu$, and the latter is controlled by $d_2^*\mu\cdot d^*_4\mu$. Therefore $\Omega$ defines a unitary intertwiner
\[
\alpha_\Omega = \mathcal{T}_{\lambda, \mu}\otimes_{C_0(G,\lambda)^{\otimes 2}}(\mathcal{T}_{\lambda, \mu}\otimes C_0(G,\lambda))\to \mathcal{T}_{\lambda, \mu}\otimes_{C_0(G,\lambda)^{\otimes 2}}( C_0(G,\lambda)\otimes \mathcal{T}_{\lambda, \mu}).
\]
The condition for $\Omega$ on $Y_4$ is exactly the pentagon equation, after pulling back all the relevant correspondences and intertwiners to the common refinement $Y_4\to G^4$.

We can construct a unit for this continuous tensor category as follows. Let $Y_e$ be the preimage of the unit $e\in G$ under $Y_1\to G$. Then, taking the counting measure on the finite set $Y_e$, we define $H:=\ell^2(Y_e)$. We define an action of $C_0(G, \lambda)$ on $H$ by letting a function $f\in C_c(Y^{[2]})$ act on $\phi\in \ell^2(Y_e)$ by
\[
(f\cdot\phi)(y) = \sum\limits_{z\in Y_e} f(y, z)\lambda(y, z, y_e)\phi(z),
\]
for a fixed $y_e\in Y_e$. Picking a different element $y_e\in Y_e$ yields an isomorphic representation. Next, the left unitor is a unitary intertwiner
\[
l: \Tcal_{\lambda, \mu}\otimes_{C_0(G, \lambda)^{\otimes 2}} (C_0(G, \lambda)\otimes H)\to C_0(G, \lambda).
\]
The left-hand side is the pullback of $\Tcal_{\lambda, \mu}$ along the degeneracy map $s_1:G\to G\times G$, $g\mapsto (g,e)$. The fact that $\mu$ is normalized implies that $s_1^*\mu$ is trivial, hence providing a canonical unitary intertwiner between the $C_0(G, \lambda)-C_0(G, \lambda)$-correspondences $\Tcal_{\lambda, \mu}\otimes_{C_0(G, \lambda)^{\otimes 2}} (C_0(G, \lambda)\otimes H)  = s_1^*\Tcal_{\lambda, \mu}$ and $C_0(G, \lambda)$. The right unitor can be constructed similarly. The triangle identities follow from the fact that $\Omega$ is normalized.

Taking a different locally finite simplicial cover or a different representative for the class $\omega$ yields an equivalent continuous tensor category. Indeed, suppose that we have another locally finite cover $Z_\bullet\to \mathscr{B}G$ and a triple $(\lambda', \mu', \Omega')$ of functions on $Z_1^{[3]} $, $Z_2^{[2]}$, and $Z_3$ respectively representing the same class $\omega$. Then, denoting by $W_\bullet:= Y_\bullet\times_{\mathscr{B}G}Z_\bullet$ the pullback cover, there exists a pair of functions $\kappa: W_1^{[2]}\to U(1)$ and $\nu: W_2\to U(1)$ satisfying
\begin{align*}
\check{d}\kappa &= p_Y^*\lambda\cdot (p_Z^*\lambda')^{-1}\\
\check{d}\nu  &= d_0^*\kappa\cdot d_1^*\kappa^{-1}\cdot d_2^*\kappa\cdot   p_Y^*\mu\cdot (p_Z^*\mu')^{-1}\\ d_0^*\nu\cdot d_2^*\nu\cdot p_Y^*\Omega &= d_1^*\nu\cdot d_3^*\nu\cdot p_Z^*\Omega' , 
\end{align*}
for $p_Y, p_Z$ the obvious projections in each equation. The function $\kappa$ provides a canonical Morita equivalence $\mathcal{S}_\kappa$ between $C_0(G, \lambda')$ and $C_0(G, \lambda)$, and $\nu$ provides a unitary intertwiner $s_\nu$ between $\mathcal{S}_\kappa\otimes_{C_0(G, \lambda)}\mathcal{T}_{\lambda, \mu}$ and $\mathcal{T}_{\lambda', \mu'}\otimes_{C_0(G, \lambda')^{\otimes 2}}(\mathcal{S}_\kappa\otimes\mathcal{S}_\kappa)$. The compatibility for $\nu$, $\Omega$ and $\Omega'$ on $W_3$ implies that $s_\nu$ is a tensorator for $\mathcal{S}_\kappa$.
\end{example}

\begin{remark}
     Assume $G$ is a Lie group. Then, the class $\omega\in H^3_{SM}(\Bcal G;U(1))$ from the previous example provides a multiplicative $U(1)$-gerbe on $G$, see \cite{waldorf}. The data of a multiplicative gerbe on $G$ defines a manifold tensor category in the sense of \cite{CW}. The continuous tensor categories associated to a locally compact group $G$ and a class in Segal-Mitchison cohomology in Example \ref{ex: HilbOmega} are analogues of manifold tensor categories constructed in \cite{CW, string}.
\end{remark}

\begin{example}
    Example \ref{ex: HilbOmega} can be substantially simplified when the cocycle $[\lambda]$ in the \v{C}ech cohomology of $G$ is trivial. Assume that the class in $H^3_{SM}(\Bcal G; U(1))$ is represented by a triple $(1, \mu, \omega)$. Then, the underlying $\mathrm{C}^*$-algebra of the continuous tensor category associated to this data is commutative, and it is equivalent in $\MorC$ to $C_0(G)$. We can then provide a $C_0(G)-C_0(G\times G)$-correspondence witnessing the tensor product as follows. The compatibility condition of $\mu$ on $Y_2^{[3]}$ is exactly that $\mu$ produces a line bundle $\mathcal{L}_\mu$ on $G\times G$. Then, the vector space $\Gamma_c(\mathcal{L}_\mu)$ of compactly-supported continuous sections of $\mathcal{L}_\mu$ can be suitably completed to obtain a right $C_0(G\times G)$-Hilbert module. In addition, $C_0(G)$ acts on the left via pullback along the multiplication $m:G\times G\to G$. The function $\omega$ provides an isomorphism of line bundles $d_0^*\mathcal{L}_\mu\otimes d_2^*\mathcal{L}_\mu\cong d_1^*\mathcal{L}_\mu\otimes d_3^*\mathcal{L}_\mu$ on $G\times G\times G$, which gives the associator of the continuous tensor category.
\end{example}

\begin{example}[$\Rep(G)$]\label{Ex: GroupC*Algebra}
    Let $G$ be a locally compact group. Let $\mu$ be its (left) Haar measure and $L^1(G)$ the vector space of $\mu$-integrable functions on $G$. Recall that $L^1(G)$ is an algebra with multiplication given by
    \[
    (f*g)(x):=\int_G f(y)g(y^{-1}x)d\mu(y).
    \]
    Given $f\in L^1(G)$, we define
    \[
    ||f|| : = \sup\{||\pi(f)|| \ |\ \text{$\pi$ is a representation of $L^1(G)$}\}.
    \]
    This algebra admits an involution given by $f^*(x) = \Delta(x^{-1})\overline{f(x^{-1})}$, for $\Delta$ the modular function on $G$. The completion of $L^1(G)$ with respect to this norm is called the (full) group $\mathrm{C}^*$-algebra of $G$ and is denoted $\mathrm{C}^*(G)$. Then, $\Rep(\mathrm{C}^*(G))\cong \Rep(G)$, the category of strongly continuous unitary representations of $G$ \cite[II.10.2.4]{OAbook}. Using the fact that $\mathrm{C}^*(G)\otimes \mathrm{C}^*(G)\cong \mathrm{C}^*(G\times G)$, one can endow $\mathrm{C}^*(G)$ with a structure of a $\mathrm{C}^*$-Hopf algebra (whose comultiplication lies in the multiplier algebra of $\mathrm{C}^*(G\times G)$), see \cite{HopfC*}. This map gives an action of $\mathrm{C}^*(G)$ on $\mathrm{C}^*(G\times G)$, which produces a canonical $\mathrm{C}^*(G)-\mathrm{C}^*(G\times G)$-correspondence on the trivial $\mathrm{C}^*(G)\otimes \mathrm{C}^*(G)$-Hilbert module $\mathrm{C}^*(G)\otimes \mathrm{C}^*(G)$. The canonical associator upgrades the data above to a continuous tensor category whose image under $\Ffrak$ is equivalent to $\Rep(G)$ as a $\mathrm{W}^*$-tensor category.
\end{example}

\section{Continuous Tambara-Yamagami tensor categories}

This section is devoted to the definition and classification of continuous Tambara-Yamagami categories. After providing a definition of these, we present a construction that takes as input a locally compact abelian group $G$, a continuous symmetric nondegenerate bicharacter $\chi: G\times G\to U(1)$, and a sign $\xi\in\{\pm1\}$ and produces a continuous Tambara-Yamagami category. We then define Tambara-Yamagami $\mathrm{W}^*$-tensor categories as those $\mathrm{W}^*$-tensor categories which have a tensor product structure that mimics the finite Tambara-Yamagami fusion rules. We show that Tambara-Yamagami $\mathrm{W}^*$-tensor categories are always equivalent to the image of a continuous Tambara-Yamagami category under $\mathfrak{F}$ and that both continuous and $\mathrm{W}^*$ Tambara-Yamagami tensor categories are classified by a locally compact group, a continuous symmetric nondegenerate bicharacter and a sign via the construction above. This can be thought of as an automatic continuity of the associators of a Tambara-Yamagami $\mathrm{W}^*$-tensor category.

\subsection{Definition and construction}
\label{sec: ctsTYcats}
In this section, we define continuous Tambara-Yamagami tensor categories. Fix a locally compact abelian group $G$ (whose group operation we write additively) and let $A:=C_0(G)\oplus\CC$. We construct an $A-A\otimes A$-correspondence that generalizes the finite Tambara-Yamagami fusion rules. The maximal tensor product $A\otimes A$ is canonically isomorphic to $C_0(G^2)\oplus C_0(G)\oplus C_0(G)\oplus \CC$ under the isomorphism
\[
\begin{array}{ccc}
(C_0(G)\oplus\CC)\otimes(C_0(G)\oplus\CC) &\to & C_0(G\times G)\oplus C_0(G)\oplus C_0(G)\oplus \CC\\
(\phi_1, c_1)\otimes (\phi_2, c_2)&\mapsto &(p_1^*\phi_1p_2^*\phi_2, c_2\phi_1, c_1\phi_2, c_1c_2),
\end{array}
\]
for $p_i: G^2\to G$ the projections. Hence, we can construct an $A-A\otimes A$-correspondence from the following pieces.
\begin{enumerate}
    \item The $C_0(G)-C_0(G\times G)$-correspondence on the trivial $C_0(G\times G)$-Hilbert module $C_0(G\times G)$ with $C_0(G)$-action given by $(\phi f)(x,y) = \phi(x+y)f(x,y)$ for $\phi\in C_0(G)$ and $f\in C_0(G\times G)$.
\item The  $\CC-C_0(G)$-correspondence which is the trivial $C_0(G)$-Hilbert module $C_0(G)$.
\item The $C_0(G)-\CC$-correspondence given by the regular representation $L^2(G)$ of $C_0(G)$.
\end{enumerate}
This data, using (\textrm{ii}) twice, produces an $A-A\otimes A$-correspondence which has underlying $(A\otimes A)$-Hilbert module the direct sum of Hilbert modules
\[
\TYcal_G: = C_0(G\times G)\oplus C_0(G)\oplus C_0(G)\oplus L^2(G)
\]
with the obvious left $(C_0(G)\oplus \CC)$-action induced componentwise by projecting onto $C_0(G)$ or $\CC$ and acting as above for every summand of $\TYcal_G$. We denote this $A-A\otimes A$-correspondence again by $\TYcal_G$. The $\mathrm{C}^*$-algebra $A$ also comes with a canonical unit morphism given by the 1-dimensional $(C_0(G)\oplus \CC)$-representation defined by
\[
(\phi, a)\cdot b : = \phi(e)b,
\]
for $\phi\in C_0(G)$, $a,b\in \CC$ and $e\in G$ the identity element.

\begin{remark}\label{Rk: TYcalG as functor}
    The definition of the correspondence $\TYcal_G$ implies that the tensor product it induces on
    \[
    \Rep(C_0(G)\oplus \CC)
    \]
     is a $\Z/2$-graded tensor product with respect to the grading $\Rep(C_0(G)\oplus \CC)\cong \Rep(C_0(G))\oplus \Hilb\cdot \tau$, where we pick a simple object $\tau$ of $\Hilb= \Rep(\CC)$. On the trivial component $\Rep(C_0(G))$, the tensor product is that of $\Hilb\,G$. The square of $\tau$ is the $C_0(G)$-representation $L^2(G).$ The middle two summands of $\TYcal_G$ define the left and right module structures of $\Hilb$ over $\Rep(C_0(G))$ induced by the functor that forgets the $C_0(G)$-action.
\end{remark}

\begin{definition}\label{def: ctsTY}
    A \emph{continuous Tambara-Yamagami tensor category for $G$} is a continuous tensor category of the form $(C_0(G)\oplus \CC, \TYcal_G, \alpha)$ for some associator $\alpha$, and whose unit data is given by the canonical unit data of $(C_0(G)\oplus\CC, \TYcal_G)$.
\end{definition}

Let us provide a class of examples of continuous Tambara-Yamagami tensor categories. Actually, as we shall see later, these exhaust all possibilities up to equivalence. Let $\chi: G\times G\to U(1)$ be a continuous symmetric bicharacter which is nondegenerate in the sense that it induces an isomorphism
\[
\begin{array}{cccc}
j_\chi: &G&\xrightarrow{\cong} &\hat{G}\\
&y&\mapsto & \big(x\mapsto \chi(x,y)\big),
\end{array}
\]
and let $\xi\in \{1, -1\}$. We show that this data provides an associator for $(A: = C_0(G)\oplus \CC, \TYcal_G)$. Just as we have done for $\TYcal_G$, and in line with Remark \ref{Rk: TYcalG as functor}, we can decompose the associator in eight pieces, one for each ordered triple $(X,Y,Z)$ for $X,Y,Z\in \{C_0(G), \CC\}.$ Note that some of the associators will be given by isomorphisms of $C_0(G)-C_0(G)$-correspondences between the composition $L^2(G)\otimes_{\CC} C_0(G)$ and itself. One can compute that the underlying vector space of this composition is $C_0(G)\otimes_\varepsilon L^2(G)$, the injective tensor product of the underlying Banach spaces of $C_0(G)$ and $L^2(G)$. This space can be identified with $C_0(G, L^2(G))$, and we will use this identification throughout. Recall that we write $\mathcal{F}: L^2(\hat{G})\to L^2(G)$ for the unitary Fourier transform. We set $c_\chi\in \R_{>0}$ as the unique positive number such that $(j_\chi)_*\mu = c_\chi\cdot \hat{\mu}$, and write $J_\chi: L^2(G)\to L^2(\hat{G})$ for the unitary $J_\chi(f)(\eta) = \sqrt{c_\chi}
\cdot f\big(j_\chi^{-1}(\eta)\big)$. The eight intertwiners that comprise the associator~are
\begin{enumerate}
    \item for $(C_0(G), C_0(G), C_0(G))$, 
    \[
    \id : C_0(G^3)\to C_0(G^3)
    \]
    as $C_0(G)-C_0(G^3)$-correspondences.
    \item for $(\CC, C_0(G), C_0(G))$ and $( C_0(G), C_0(G), \CC)$, 
    \[
    \id: C_0(G^2)\to C_0(G^2)
    \]
    as $\CC-C_0(G^2)$-correspondences.
    \item for $(C_0(G), \CC, C_0(G))$, 
\[
\begin{array}{ccc}
C_0(G^2)&\to&C_0(G^2)\\
f&\mapsto &\chi f
\end{array}
\]
as $\CC-C_0(G^2)$-correspondences.
\item for $(C_0(G), \CC, \CC)$, 
\[
\begin{array}{ccc}
    C_0(G)\otimes_\varepsilon L^2(G) &\to  &C_0(G)\otimes_\varepsilon L^2(G) \\
     f(x,y)&\mapsto & f(x, x+y)
\end{array}
\]
as $C_0(G)-C_0(G)$-correspondences.
\item for $(\CC, \CC, C_0(G))$,
\[
\begin{array}{ccc}
    C_0(G)\otimes_\varepsilon L^2(G) &\to  &C_0(G)\otimes_\varepsilon L^2(G) \\
     f(x,y)&\mapsto & f(x, -x+y)
\end{array}
\]
as $C_0(G)-C_0(G)$-correspondences.

\item for $(\CC, C_0(G), \CC)$, 
\[
\begin{array}{ccc}
    C_0(G)\otimes_\varepsilon L^2(G) &\to  &C_0(G)\otimes_\varepsilon L^2(G) \\
     f&\mapsto & \chi f
\end{array}
\]
as $C_0(G)-C_0(G)$-correspondences.
\item for $(\CC, \CC, \CC)$,
\[
L^2(G)\xrightarrow{J_\chi} L^2(\hat{G})\xrightarrow{\xi\cdot \Fcal}L^2(G),
\]
as $\CC-\CC$-correspondences.
\end{enumerate}
The data above defines an intertwiner
\[
\alpha_{\chi, \xi}: \TYcal_G\otimes_{A\otimes A}(\TYcal_G\otimes A)\xrightarrow{\cong}\TYcal_G\otimes_{A\otimes A}(A\otimes \TYcal_G)
\]
which provides an associator.
\begin{proposition}\label{prop: ctsTYexist}
    The data above defines a continuous Tambara-Yamagami category $\TYcal(G, \chi, \xi)$.
\end{proposition}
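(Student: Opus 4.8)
The plan is to verify that the tuple $(C_0(G)\oplus\CC,\ \TYcal_G,\ \1,\ \alpha_{\chi,\xi})$, together with the canonical unit data, satisfies the axioms of a (unitary) algebra object in $\MorC$. This breaks into four tasks: (a) each of the eight listed maps is a well-defined \emph{unitary} intertwiner of the appropriate correspondences; (b) these eight pieces assemble into a single unitary intertwiner $\alpha_{\chi,\xi}$ between $\TYcal_G\otimes_{A\otimes A}(\TYcal_G\otimes A)$ and $\TYcal_G\otimes_{A\otimes A}(A\otimes\TYcal_G)$; (c) the pentagon identity holds; and (d) the triangle identity holds against the canonical unit. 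Throughout I would exploit the $\Z/2\Z$-grading of Remark \ref{Rk: TYcalG as functor}: because the tensor product respects $\Rep(C_0(G)\oplus\CC)\cong\Rep(C_0(G))\oplus\Hilb\cdot\tau$, both the associator and the pentagon decompose into graded components indexed respectively by triples and quadruples of objects drawn from $\{C_0(G),\CC\}$, and it suffices to check each component.

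For task (a), the identity maps in (i) and (ii) are evidently unitary intertwiners. The modulations in (iii) and (vi) are unitary because $\chi$ is $U(1)$-valued, and they intertwine the module structures since pointwise multiplication by the function $\chi$ commutes with the pointwise $C_0(G)$- and $\CC$-actions. The Haar shift maps in (iv) and (v) act on $C_0(G)\otimes_\varepsilon L^2(G)\cong C_0(G,L^2(G))$ by translating the $L^2$-variable, so they are fibrewise unitary by translation-invariance of the Haar measure, and one checks directly that translation intertwines the left and right actions. Finally, the map in (vii) is the composite of a $\chi$-modulation, which transports $\Fcal$ along the isomorphism $G\cong\hat{G}$ supplied by nondegeneracy of $\chi$, with $\xi\cdot\Fcal$; it is unitary by the Plancherel theorem together with $\xi\in\{\pm1\}$. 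Task (b) is then a bookkeeping check that the prescribed pieces are compatible with the Rieffel-tensor-product identifications of the two composite correspondences.

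For the pentagon (task (c)) I would organise the $16$ quadruples $(W,X,Y,Z)$ by their number of $\tau=\CC$ factors. The zero-$\tau$ quadruple is trivial. A one-$\tau$ quadruple collapses the two pentagon paths to scalars and reduces exactly to the bicharacter identity $\chi(xy,z)=\chi(x,z)\chi(y,z)$ (and its second-slot analogue), which holds by hypothesis. The two-$\tau$ quadruples reduce to the symmetry $\chi(x,y)=\chi(y,x)$ together with elementary compatibilities among the shift and modulation pieces that follow from the group law and translation-invariance. The three-$\tau$ quadruples each involve a single Fourier piece interacting with shifts and modulations, and are settled by Proposition \ref{prop: PropertiesF}, which records how $\Fcal$ interchanges translation with modulation.

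The decisive case, and the step I expect to be the main obstacle, is the all-$\tau$ pentagon $(\tau,\tau,\tau,\tau)$. Since $\tau\otimes\tau\cong L^2(G)$ lies in the grade-zero summand, the five associators occurring here are two copies of the Fourier piece $\xi\cdot\Fcal$, the modulation $\alpha_{\tau,g,\tau}$ given by multiplication by $\chi$, and the two Haar shifts; the pentagon demands that their prescribed composite be the identity. The hard part is to verify this: after transporting everything to $L^2(G)$ and using the symmetry and nondegeneracy of $\chi$ to identify $\hat{G}$ with $G$, the relation collapses to the Fourier inversion formula combined with Plancherel, and the overall scalar reduces to $\xi^2=1$. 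This is precisely the continuous counterpart of the finite Gauss-sum relation, and the essential simplification over the finite case is that Plancherel already makes $\Fcal$ an isometry, so no normalising factor $|G|^{-\nicefrac{1}{2}}$ is needed and a mere sign $\xi\in\{\pm1\}$ suffices. Finally, the triangle identity (task (d)) is immediate: the canonical unit is $\delta_e$ and $\chi(e,-)\equiv1$, so every unit-associated modulation reduces to the identity and matches the unitors. Assembling these checks exhibits $(C_0(G)\oplus\CC,\TYcal_G,\1,\alpha_{\chi,\xi})$ as a continuous tensor category, which we denote $\TYcal(G,\chi,\xi)$.
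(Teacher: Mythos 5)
Your proposal is correct and follows essentially the same route as the paper: decompose the pentagon into the $16$ graded components indexed by quadruples in $\{C_0(G),\CC\}$, dispatch the one- and two-$\tau$ cases using that $\chi$ is a (symmetric) bicharacter, the three-$\tau$ cases using the translation/modulation properties of the Fourier transform (Proposition \ref{prop: PropertiesF}), and the all-$\tau$ case using Fourier inversion together with $\xi^2=1$. The paper treats the unitarity/intertwiner checks and the triangle identity as implicit, so your tasks (a), (b), (d) are just extra bookkeeping on top of the same argument.
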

\begin{proof}
    The triangle equations are all trivial except for the case $(\tau, \tau)$. There, the triangle equation holds as $\chi(e, x) = \chi( x, e) = 1$ for all $x\in G$.

    It remains only to argue that the associator $\alpha_{\chi, \xi}$ satisfies the pentagon equation. We can decompose the pentagon equation into 16 equations, one for each ordered tuple $(X,Y,Z,W)$ for $X,Y,Z,W\in\{C_0(G), \CC\}$. Denote each such equation by the set of positions with the entry $\CC$. For example, $\{1,3\}$ denotes the equation corresponding to $(\CC, C_0(G), \CC, C_0(G))$. We now list the reasons for commutativity of the nontrivial pentagon diagrams. $\{2\}$, $\{3\}$, $\{1,3\}$, $\{1,4\}$ and $\{2,4\}$ commute by multiplicativity of the bicharacter. Next, $\{1,2,3\}$ and $\{1,3,4\}$ commute by Proposition \ref{prop: PropertiesF}\ref{prop: PropertiesF1} and $\{2,3,4\}, \{1,2,4\}$ commute by Proposition \ref{prop: PropertiesF}\ref{prop: PropertiesF2}. Finally, $\{1,2,3,4\}$ is equivalent to the equality
    \begin{equation}\label{eq: tautautautau}
\xi^2(\mathcal{F}J_\chi\otimes \id)\Big(\chi(x,y)\cdot (\mathcal{F} J_\chi\otimes\id)(f)(x,y)\Big) = f(y-x, y)
    \end{equation}
    for all $f\in L^2(G\times G).$ By the fact that $\xi^2 = 1$, the frequency shifting property of the Fourier transform and the Inverse Fourier Transform Theorem \cite[Thm. 1.5.1]{rudin}, Equation \eqref{eq: tautautautau} holds.
\end{proof}

We may sometimes refer to $\tau$ as the (unique) simple non-invertible object. We note that $\tau$ is invertible if $G$ is trivial. This case is also treated in our discussion although the terminology is adapted to the more interesting case $G\neq\{e\}.$

For completeness, in the remainder of this section, we spell out the data of the image of $\TYcal(G, \chi, \xi)$ as a $\mathrm{W}^*$-tensor category under the assignment in Proposition \ref{Prop: ForgetE1}, which we also denote by $\TYcal(G, \chi, \xi)$. The underlying $\mathrm{W}^*$-category is $\Rep(C_0(G)\oplus\CC)\cong \Rep(C_0(G))\oplus \Hilb\cdot\tau$. The simple objects of $\Rep(C_0(G))$ are parametrized by the topological space $G$. Given $x\in G$, the associated irreducible representation of $C_0(G)$ has underlying Hilbert space $\CC$ and a function $f\in C_0(G)$ acts by multiplication by $f(x)$. We denote this representation by $\delta_x\in \Rep(C_0(G))$. For simplicity, we work on the following full subcategory of $\Rep(C_0(G))$. Given a locally compact Hausdorff second countable space $T$, a continuous map $p:T\to G$ and a positive Radon measure $\upsilon$ on $T$, we obtain an object, denoted $\int^{\oplus}_{t\in T}\delta_{p(t)}\mathrm{d}\upsilon(t)$, of $\Rep(C_0(G))$ as follows. The underlying Hilbert space of $\int^{\oplus}_{t\in T}\delta_{p(t)}\mathrm{d}\upsilon(t)$ is $L^2(T, \upsilon)$ and the $C_0(G)$-action is given by multiplication after pulling back along $p$. We also denote this object by $\upsilon$ or $(p, \upsilon)$ when the rest of the data is clear from the context. Whenever we write $\int^{\oplus}_{t\in T}\delta_{p(t)}\mathrm{d}\upsilon(t)\in \Rep(C_0(G))$ we mean that $T$ is a locally compact Hausdorff second countable space, $p:T\to G$ is a continuous map, $\upsilon$ is a positive Radon measure on $T$, and $\int^{\oplus}_{t\in T}\delta_{p(t)}\mathrm{d}\upsilon(t)$ is constructed as above.
\begin{proposition}\label{Prop: FullSubcat}
	Let $X$ be a locally compact Hausdorff second countable space. Then, the full subcategory of $\Rep\big(C_0(X)\big)$ on the objects of the form $\int^{\oplus}_{t\in T}\delta_{p(t)}\mathrm{d}\upsilon(t)$ is equivalent to $\Rep\big(C_0(X)\big)$. 
\end{proposition}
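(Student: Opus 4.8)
The inclusion of the full subcategory spanned by the objects represented by pairs $(p:T\to X,\lambda)$ into $\Rep(C_0(X))$ is fully faithful by construction, so the plan is to establish essential surjectivity: I must show that every non-degenerate representation of $C_0(X)$ is unitarily isomorphic to an object of the form $\lambda$. The strategy is to reduce to the cyclic case and then reassemble, exploiting that the relevant class of objects is closed under countable direct sums.

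First I would record this closure property. Given a countable family of objects represented by $(p_i:T_i\to X,\lambda_i)$, their direct sum in $\Rep(C_0(X))$ has underlying Hilbert space $\bigoplus_i L^2(T_i,\lambda_i)\cong L^2\big(\bigsqcup_i T_i,\ \bigsqcup_i\lambda_i\big)$, and the coproduct map $\bigsqcup_i p_i:\bigsqcup_i T_i\to X$ is continuous since it is continuous on each clopen summand and the $C_0(X)$-action is again pullback-and-multiply. A countable disjoint union of paracompact, Hausdorff, second countable (locally compact) spaces remains of this type, so the assembled pair is admissible under the standing hypotheses and represents the direct sum.

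Next I would treat a single cyclic representation $H$ with cyclic vector $\Omega$. The functional $\phi\mapsto\langle\Omega,\pi(\phi)\Omega\rangle$ is positive on $C_0(X)$, so by Riesz--Markov it is integration against a finite Radon measure $\mu$ on $X$; the GNS construction then identifies $H$ with $L^2(X,\mu)$ on which $C_0(X)$ acts by multiplication. This is precisely the object represented by $(\mathrm{id}_X:X\to X,\mu)$, so every cyclic representation already lies in the subcategory. Finally, since any representation on a separable Hilbert space decomposes as a countable orthogonal direct sum of cyclic subrepresentations, combining this with the closure under countable direct sums shows that every object of $\Rep(C_0(X))$ is isomorphic to one represented by some pair $(p:T\to X,\lambda)$, where one may take $T=\bigsqcup_i X$ with $p$ the evident projection and $\lambda$ assembled from the cyclic measures.

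The genuinely substantive inputs are the two standard structure-theoretic facts — the Riesz--Markov/GNS identification of a cyclic representation with $L^2(X,\mu)$, and the existence of a \emph{countable} cyclic decomposition, which uses separability of both $H$ and $C_0(X)$. The remaining work is the routine but necessary verification that the reassembled space $\bigsqcup_i T_i$ still satisfies the standing topological hypotheses so that the resulting pair is admissible; I expect no real obstacle there, only bookkeeping.
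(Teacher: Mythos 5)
Your proposal is correct and follows essentially the same route as the paper: decompose an arbitrary representation into an orthogonal family of cyclic subrepresentations, identify each cyclic piece with some $L^2(X,\nu_i)$ via GNS and Riesz representation, and reassemble over $T=\bigsqcup_i X$ with the evident projection. Your added remarks on countability from separability and on the disjoint union preserving the standing topological hypotheses are harmless elaborations of what the paper leaves implicit.
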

\begin{proof}
Let $\mathcal{D}$ be the full subcategory of $\Rep\big(C_0(X)\big)$ on objects of the form $\int^{\oplus}_{t\in T}\delta_{p(t)}\mathrm{d}\upsilon(t)$. It is enough to show that the inclusion functor  $\mathcal{D}\to \Rep\big(C_0(X)\big)$ is essentially surjective. Let $K\in \Rep\big(C_0(X)\big)$. Then, there is a countable collection of cyclic subrepresentations $\{K_i\subset K\}_{i\in I}$, pairwise orthogonal and such that
\[
\bigoplus\limits_{i\in I} K_i= K.
\]
By the GNS construction, each $K_i$ is induced by a state $\rho_i$ on $C_0(X)$, and by the Riesz Representation Theorem, each state $\rho_i$ is given by a Baire probability measure $\upsilon_i$ on X. Hence, there are measures $\{\upsilon_i\}_{i\in I}$ on $X$ such that
\[
K_i \cong L^2(X, \upsilon_i).
\]
Let $T : = \bigsqcup_{i\in I} X$ and $p :T\to X$ be the map whose restriction to every copy of $X$ is the identity. Then, $\upsilon:= \sum_{i\in I}\upsilon_i$ is a positive Radon measure on $T$ and 
\[
K \cong L^2(T, \upsilon)
\]
with $C_0(X)$-action on the right-hand side given by multiplication after pulling back along~$p$.
\end{proof}

Given $\int^{\oplus}_{t\in T}\delta_{p(t)}\mathrm{d}\upsilon(t)$ and $\int^{\oplus}_{s\in S}\delta_{n(s)}\mathrm{d}\nu(s)$ objects of $\Rep(C_0(G))$, the object
\begin{equation}\label{eq: productHilbG}
\int^{\oplus}_{(t,s)\in T\times S}\delta_{p(t)+n(s)}\mathrm{d}(\upsilon\times \nu)(t,s)
\end{equation}
is also denoted by $\upsilon\times\nu$ or $(p+n, \upsilon\times\nu)$, in line with the notation described above. We note that, denoting by $\mu$ the Haar measure on $G$, the object $\int^{\oplus}_{x\in G}\delta_x\mathrm{d}\mu(x)$ is the representation $L^2(G)$ of $C_0(G)$. The object in Equation \eqref{eq: productHilbG} is the tensor product of $\int^{\oplus}_{t\in T}\delta_{p(t)}\mathrm{d}\upsilon(t)$ and $\int^\oplus_{s\in S}\delta_{n(s)}\mathrm{d}\nu(s)$ in $\Hilb\,G$.

We can apply Proposition \ref{Prop: ForgetE1} to $\TYcal(G, \chi, \xi)$ to obtain a $\mathrm{W}^*$-tensor category also denoted $\TYcal(G, \chi, \xi).$ Given a Hilbert space $H\in \Hilb$ and a representation $K\in \Rep(C_0(G))$, we write $H\cdot K\in \Rep(C_0(G))$ for the representation $K$ with multiplicity $H$. Note that the underlying Hilbert space of $H\cdot K$ is the Hilbert space tensor product $H\otimes K$. 
\begin{corollary}\label{cor: CTYgivesWTY}
    Using the notation above, the following data defines a $\mathrm{W}^*$-tensor category $\TYcal(G, \chi, \xi)$.
\begin{enumerate}
	\item The underlying $\mathrm{W}^*$-category is $\Rep(C_0(G)\oplus \CC)\cong \Rep(C_0(G))\oplus \Hilb\cdot \tau$;
	\item the tensor functor is given by
	\[
	\upsilon\otimes \nu = \upsilon\times \nu,\hspace{1cm} \upsilon\otimes \tau= L^2(\upsilon)\cdot \tau,\hspace*{1cm} \tau\otimes \upsilon = L^2(\upsilon)\cdot\tau, \hspace{1cm} \tau\otimes \tau = \mu, 
	\]
    for objects $\int^{\oplus}_{t\in T}\delta_{p(t)}\mathrm{d}\upsilon(t)$ and $\int^{\oplus}_{s\in S}\delta_{n(s)}\mathrm{d}\nu(s)$ of $\Rep(C_0(G))$;
	\item the associators are as follows, where we use objects $\int^{\oplus}_{t\in T}\delta_{p(t)}\mathrm{d}\upsilon(t),\, \int^{\oplus}_{s\in S}\delta_{n(s)}\mathrm{d}\nu(s)$ and $\int^{\oplus}_{r\in R}\delta_{z(r)}\mathrm{d}\eta(r)$ of $\Rep(C_0(G)),$
	\[
	\begin{array}{cccc}
		\alpha_{\upsilon, \nu, \eta}:& \upsilon\times\nu\times\eta&\xrightarrow{\id}& \upsilon\times\nu\times \eta\vspace{.3cm}\\
		\alpha_{\tau, \upsilon,\nu} = \alpha_{\upsilon, \nu, \tau} :& L^2(\upsilon\times \nu)\cdot \tau& \xrightarrow{\id} &L^2(\upsilon\times\nu )\cdot \tau\vspace{.3cm}\\
		\alpha_{\upsilon,\tau,\nu}  :& L^2(\upsilon\times \nu)\cdot \tau& \to &L^2(\upsilon\times\nu )\cdot \tau\\ 
		&f(t,s)&\mapsto& \chi\big(p(t), n(s)\big)f(t,s)\vspace{.3cm}\\
		\alpha_{\upsilon,\tau,\tau}:&L^2( \upsilon)\cdot \mu& \to &\upsilon \times\mu\\
		&f(t, x)&\mapsto& f\big(t, p(t)+x\big)\vspace{.3cm}\\
		\alpha_{\tau,\tau, \upsilon}:& \mu\times\upsilon& \to &L^2(\upsilon)\cdot \mu\\
		&f(x,t)&\mapsto& f\big(t,-p(t)+x\big)\vspace{.3cm}\\
		\alpha_{\tau,\upsilon,\tau}:& L^2(\upsilon)\cdot \mu &\to & L^2(\upsilon)\cdot \mu\\
		&f(t,x)&\mapsto &\chi\big(p(t), x\big)\cdot f(t,x)
	\end{array}
	\]
	and 
	\[	
	\alpha_{\tau,\tau,\tau} : L^2(G,\mu)\cdot \tau\xrightarrow{J_\chi}L^2(\hat{G},\hat{\mu})\cdot \tau\xrightarrow{\xi\cdot\mathcal{F}}L^2(G,\mu)\cdot \tau;
	\]
	\item the unitors are the evaluation-on-$e$ maps $\delta_e\times \upsilon\to\upsilon$ and $\upsilon\times \delta_e\to \upsilon$.
\end{enumerate}
\end{corollary}

\subsection[Continuity of Tambara-Yamagami W*-tensor categories]{Continuity of Tambara-Yamagami $\mathrm{W}^*$-tensor categories}
\label{sec: Automaticcontinuity} In this section we will prove that the associators of a $\mathrm{W}^*$-tensor category with a Tambara-Yamagami-like tensor product are automatically continuous. Let $G$ be a locally compact abelian group. 
\begin{definition}\label{def: WTY}
	A \emph{Tambara-Yamagami $\mathrm{W}^*$-tensor category for $G$} is a $\mathrm{W}^*$-tensor category whose underlying $\mathrm{W}^*$-category is $\Rep(C_0(G))\oplus\Hilb\cdot\tau$ and is such that it
    \begin{enumerate}
        \item admits unitary natural isomorphisms
          \[
        \upsilon\otimes\nu \overset{[\upsilon,\nu]}{\cong}  \upsilon\times \nu\hspace{1cm}
	\upsilon\otimes \tau \overset{[\upsilon,\tau]}{\cong} L^2(\upsilon)\cdot\tau\overset{[\tau,\upsilon]}{\cong}  \tau\otimes \upsilon,
        \]
        for any objects $\int^{\oplus}_{t\in T}\delta_{p(t)}\mathrm{d}\upsilon(t),\, \int^{\oplus}_{s\in S}\delta_{n(s)}\mathrm{d}\nu(s)\in\Rep(C_0(G))$;
        \item satisfies that $\tau\otimes\tau\neq 0$ and $\Hom_{\Rep(C_0(G))\oplus \Hilb\cdot \tau}(\tau\otimes \tau, \tau) = 0$.
    \end{enumerate}
\end{definition}
By Proposition \ref{Prop: ForgetE1}, every continuous Tambara-Yamagami category for $G$ induces a Tambara-Yamagami $\mathrm{W}^*$-tensor category for $G$ through $\Ffrak$, by taking the morphisms $[\upsilon,\nu], [\upsilon, \tau]$, and $[\tau, \upsilon]$ to be identities. In a general Tambara-Yamagami $\mathrm{W}^*$-tensor category, the associators are not required to be continuous in the sense that they are not required to be of the form $\Ffrak(u)$ for $u$ a 2-morphism in $\MorC$, and $\tau\otimes\tau$ is not required to be $\int^\oplus_{x\in G}\delta_x\mathrm{d}\mu(x)\in \Rep(C_0(G))$. We will prove, however, that every Tambara-Yamagami $\mathrm{W}^*$-tensor category is equivalent to one coming from a continuous Tambara-Yamagami tensor~category.
\begin{remark}
    Note that the natural isomorphisms $[\upsilon,\nu]$, $[\upsilon, \tau]$ and $[\tau, \upsilon]$ are not required to be compatible with the associators of the induced tensor structure on $\Rep(C_0(G))\oplus \Hilb\cdot \tau$.
\end{remark}

Let us fix a Tambara-Yamagami $\mathrm{W}^*$-tensor category $\Ccal$. Let us denote by $\Ccal_0 : = \Rep(C_0(G))$ the full subcategory of $\Rep(C_0(G))\oplus\Hilb\cdot\tau$ on direct integrals of the invertible simple~objects.

\begin{proposition}\label{Prop: Tau2isL2G}
    There is a nonzero Hilbert space $H$ and an isomorphism of $C_0(G)$-representations
    \[
    [\tau]: \tau\otimes \tau \xrightarrow{\cong} H\cdot \int^\oplus_{x\in G}\delta_{x}\mathrm{d}\mu(x),
    \]
    where the right-hand side denotes the $C_0(G)$-representation $\int^\oplus_{x\in G}\delta_{x}\mathrm{d}\mu(x)$ with multiplicity $H$.
\end{proposition}
\begin{proof}
    By the hypothesis that $\tau\otimes \tau\neq 0$ and  $\Hom_{\Ccal}(\tau\otimes \tau, \tau) = 0$, we have that $\tau\otimes\tau\in \Rep(C_0(G))$ is a nonzero representation. Since the underlying Hilbert space of $\tau\otimes\tau$ is separable, its spectral measure class admits a finite regular Borel measure. Equivalently, there is a nonzero finite regular Borel measure $\nu$ on $G$ and a $\nu$-measurable Hilbert bundle $K = \{K_x\}_{x\in G}$ (which we may assume satisfies that $K_x\neq 0$ $\nu$-almost everywhere) such that
    \begin{equation}\label{eq: tautau}
    \tau\otimes\tau\cong \int^\oplus_{x\in G}K_x\mathrm{d}\nu(x)\neq 0,
    \end{equation}
    where the right-hand side denotes the space of square-integrable sections of $K$ with $C_0(G)$-action given by pointwise multiplication. By associativity, for every element $y\in G$ we have
    \[
    \int^\oplus_{x\in G} K_x\mathrm{d}\nu(x) = (\delta_y\otimes \tau)\otimes \tau\cong \delta_y\otimes (\tau\otimes\tau) = \int^\oplus_{x\in G}K_{x-y}\mathrm{d}(L_y)_*\nu(x)
    .\]
    for $L_y: G\to G$ the map given by $L_y(x) = x+y$. Equivalently, for every $y\in G$, we have that 
    \begin{enumerate}
   \item $(L_y)_*\nu$ and $\nu$ are equivalent measures; 
   \item$\dim(K_x) = \dim(K_{x-y})$ for almost every $x\in G$. 
   \end{enumerate}
   We first show that $\nu$ is equivalent to the Haar measure $\mu$. Let $h\in L^1(G, \mu)\cap L^\infty(G, \mu)$ be a function taking values in $(0, \infty)$. Then, we may define a new measure $\bar{\nu}$ on $G$ by
    \[
    \bar{\nu}(A) := \int_{x\in G}(L_x)_*\nu(A)h(x)\mathrm{d}\mu(x) = \int_{y\in A}\Big(\int_{x\in G} h(y-x)\mathrm{d}\nu(x)\Big)\mathrm{d}\mu(y)
    \]
    for any Borel $A\subset G$. Note that we have used Tonelli's Theorem in the equality above. Therefore, 
    \[
    \mathrm{d}\bar{\nu}(y) = \Big(\int_{x\in G}h(y-x)\mathrm{d}\nu(x)\Big)\mathrm{d}\mu(y),
    \]
    and the integral $\int_{x\in G}h(y-x)\mathrm{d}\nu(x)$ is strictly positive because $\nu$ is nonzero and $h$ is strictly positive. Therefore, $\bar{\nu}$ is equivalent to the Haar measure $\mu$. On the other hand, the conclusion $(\mathrm{i})$ above implies that $\bar{\nu}$ and $\nu$ are equivalent. Indeed, if $A\subset G$ is a subset with $\nu(A) = 0$, then any translate of ${\nu}$ vanishes on $A$, and hence $\bar{\nu}(A) = 0$. If $\nu(A)>0$, then $(L_x)_*\nu(A)>0$ for every $x\in G$, and integration against the strictly positive $h$ gives $\bar{\nu}(A)>0$. Therefore, $\nu$ is equivalent to the Haar measure $\mu$, and we can replace $\nu$ by $\mu$.

    Now, for every $n\in\mathbb{N}\cup\{\infty\}$, we write $A_n : = \{x\in G\,|\, \dim(K_x) = n\}$. By the consequence $(\mathrm{ii})$ above, for any $n\in\mathbb{N}\cup\{\infty\}$ and any $x\in G$, it holds that $\mu((A_n + x)\Delta A_n) = 0$, for $\Delta$ the symmetric difference. Any measurable subset of $G$ invariant under the action of $G$ on itself up to $\mu$-null subsets is either $\mu$-null or $\mu$-conull. Hence, each $A_n$ is either null or conull with respect to $\mu$. Since the subsets $A_n$ form a countable measurable partition of $G$, there exists exactly one $n_0\in \mathbb{N}\cup\{\infty\}$ for which
    \[
    \dim(K_x) = n_0
    \]
    for almost every $x\in G$. Since $\tau\otimes \tau\neq 0$, we have that $n_0\neq 0$. Let $H$ be any separable Hilbert space of dimension $n_0$. Since any measurable field of separable Hilbert spaces of constant dimension over a standard measure space is measurably trivial, we have that
    \[
    K_x\cong H
    \]
    for almost every $x\in G$. Thus, the right-hand side of Equation \eqref{eq: tautau} is
    \[
    H\cdot \int^{\oplus}_{x\in G}\delta_x\mathrm{d}\mu(x).
    \]
\end{proof}
Throughout this section, we use objects $\int^{\oplus}_{t\in T}\delta_{p(t)}\mathrm{d}\upsilon(t),\, \int^{\oplus}_{s\in S}\delta_{n(s)}\mathrm{d}\nu(s),\, \int^{\oplus}_{r\in R}\delta_{z(r)}\mathrm{d}\eta(r)\in \Rep(C_0(G))$, and denote them by $\upsilon,\,\nu$ and $\eta$ respectively. We reserve the notation $L^2(\upsilon)$ for the object of $\Hilb$ (without the action of $C_0(G)$) and use the notation $K\cdot \upsilon = K\cdot \int^{\oplus}_{t\in T}\delta_{p(t)}\mathrm{d}\upsilon(t)$ to denote the  $C_0(G)$-representation $\int^{\oplus}_{t\in T}\delta_{p(t)}\mathrm{d}\upsilon(t)$ with multiplicity $K\in \Hilb$. 

\begin{remark}
    By Proposition \ref{Prop: FullSubcat}, the definition of a Tambara-Yamagami $\mathrm{W}^*$-tensor category constrains the tensor product of all objects in the category. In particular, $\upsilon \otimes \Big(H\cdot \int^{\oplus}_{x\in G}\delta_x\mathrm{d}\mu(x)\Big) \cong H\cdot \int^{\oplus}_{(t,x)\in T\times G}\delta_{p(t) + x}\mathrm{d}(\upsilon\times\mu)(t,x)$ via $[\upsilon, \mu]$ for every $\int^{\oplus}_{t\in T}\delta_{p(t)}\mathrm{d}\upsilon(t)\in \Rep(C_0(G))$. 
\end{remark}

Choose a pair of a Hilbert space $H$ and an isomorphism $\tau\otimes\tau\xrightarrow[\cong]{[\tau]} H\cdot \int^{\oplus}_{x\in G}\delta_{x}\mathrm{d}\mu(x)$, which exists by Proposition \ref{Prop: Tau2isL2G}. We will show that $G$ is self-Pontryagin dual, that $H\cong \CC$  and that $\Ccal \cong \TYcal(G,\chi,\xi)$ for some continuous symmetric nondegenerate bicharacter $\chi$ on $G$ and a sign $\xi\in \{1,-1\}$. To do so, we will find suitable bases of the various morphism spaces involved to reshape the associators of $\Rep(C_0(G))\oplus\Hilb\cdot\tau$.  By definition, there are natural isomorphisms
\[\begin{tikzcd}
	{\Ccal_0\times\Ccal_0} && {\Ccal_0} & {\Ccal_0} && \Hilb & {\Ccal_0} && \Hilb,
	\arrow[""{name=0, anchor=center, inner sep=0}, "{-\otimes-}", curve={height=-18pt}, from=1-1, to=1-3]
	\arrow[""{name=1, anchor=center, inner sep=0}, "{-\times -}"', curve={height=18pt}, from=1-1, to=1-3]
	\arrow[""{name=2, anchor=center, inner sep=0}, "{\Hom(\tau, \tau\otimes-)}", curve={height=-18pt}, from=1-4, to=1-6]
	\arrow[""{name=3, anchor=center, inner sep=0}, "\text{Forget}"', curve={height=18pt}, from=1-4, to=1-6]
	\arrow[""{name=4, anchor=center, inner sep=0}, "{\Hom(\tau, -\otimes\tau)}", curve={height=-18pt}, from=1-7, to=1-9]
	\arrow[""{name=5, anchor=center, inner sep=0}, "\text{Forget}"', curve={height=18pt}, from=1-7, to=1-9]
	\arrow["{[-,-]}", shorten <=5pt, shorten >=5pt, Rightarrow, from=0, to=1]
	\arrow["{[\tau,-]}", shorten <=5pt, shorten >=5pt, Rightarrow, from=2, to=3]
	\arrow["{[-,\tau]}", shorten <=5pt, shorten >=5pt, Rightarrow, from=4, to=5]
\end{tikzcd}\]
where $\text{Forget}: \Rep(C_0(G))\to \Hilb$ is the functor that forgets the $C_0(G)$-action. We will refer to these, together with $[\tau]$, as \emph{coordinates}. Using the coordinates above we can define the following maps at the bottom of every square (we drop the tensor product symbols for~readability).

\[\begin{tikzcd}[column sep=small]
	{(\upsilon\nu)\eta} && {\upsilon(\nu\eta)} && {(\tau\tau)\tau} && {\tau(\tau\tau)} \\
	{(\upsilon\times \nu)\eta} && {\upsilon(\nu\times \eta)} && {(H\cdot \mu)\tau} && {\tau (H\cdot \mu)} \\
	{\upsilon\times\nu\times \eta} && {\upsilon\times\nu\times\eta} && {L^2(\mu, H)\cdot\tau} && {L^2(\mu, H)\cdot\tau} \\
	\\
	{(\tau\upsilon)\nu} && {\tau(\upsilon\nu)} && {(\upsilon\tau)\nu} && {\upsilon(\tau\nu)} \\
	{L^2(\upsilon)\cdot(\tau\nu)} && {\tau(\upsilon\times\nu)} && {L^2(\upsilon)\cdot(\tau\nu)} && {L^2(\nu)\cdot(\upsilon\tau)} \\
	{L^2(\upsilon)\otimes L^2(\nu)\cdot\tau} &&&& {L^2(\upsilon)\otimes L^2(\nu)\cdot \tau} && {L^2(\upsilon)\otimes L^2(\nu)\cdot \tau} \\
	{L^2(\upsilon\times\nu)\cdot \tau} && {L^2(\upsilon\times\nu)\cdot \tau} && {L^2(\upsilon\times\nu)\cdot \tau} && {L^2(\upsilon\times\nu)\cdot \tau} \\
	\\
	{(\upsilon\nu)\tau} && {\upsilon(\nu\tau)} && {(\upsilon\tau)\tau} && {\upsilon(\tau\tau)} \\
	{(\upsilon\times\nu)\tau} && {L^2(\nu)\cdot(\upsilon\tau)} && {L^2(\upsilon)\cdot(\tau\tau)} && {H\cdot (\upsilon\mu)} \\
	&& {L^2(\upsilon)\otimes L^2(\nu)\cdot\tau} && {L^2(\upsilon)\cdot(H\cdot \mu)} \\
	{L^2(\upsilon\times\nu)\cdot \tau} && {L^2(\upsilon\times\nu)\cdot\tau} && {L^2(\upsilon, H)\cdot \mu} && {H\cdot (\upsilon\times\mu)}
	\arrow["{\alpha_{\upsilon, \nu, \eta}}", from=1-1, to=1-3]
	\arrow["{[\upsilon, \nu]\id}"', from=1-1, to=2-1]
	\arrow["{\id[\nu, \eta]}", from=1-3, to=2-3]
	\arrow["{\alpha_{\tau,\tau,\tau}}", from=1-5, to=1-7]
	\arrow["{[\tau]\id}"', from=1-5, to=2-5]
	\arrow["{\id[\tau]}", from=1-7, to=2-7]
	\arrow["{[\upsilon\times\nu, \eta]}"', from=2-1, to=3-1]
	\arrow["{[\upsilon, \nu\times\eta]}", from=2-3, to=3-3]
	\arrow["{[\mu,\tau]}"', from=2-5, to=3-5]
	\arrow["{[\tau,\mu]}", from=2-7, to=3-7]
	\arrow["{\alpha(\upsilon, \nu, \eta)}"', from=3-1, to=3-3]
	\arrow["{\gamma\cdot\id}"', from=3-5, to=3-7]
	\arrow["{\alpha_{\tau, \upsilon, \nu}}", from=5-1, to=5-3]
	\arrow["{[\tau, \upsilon]\id}"', from=5-1, to=6-1]
	\arrow["{\id[\upsilon, \nu]}", from=5-3, to=6-3]
	\arrow["{\alpha_{\upsilon, \tau, \nu}}", from=5-5, to=5-7]
	\arrow["{[\upsilon,\tau]\id}"', from=5-5, to=6-5]
	\arrow["{\id[\tau,\nu]}", from=5-7, to=6-7]
	\arrow["{[\tau, \nu]}"', from=6-1, to=7-1]
	\arrow["{[\tau, \upsilon\times\nu]}", from=6-3, to=8-3]
	\arrow["{[\tau,\nu]}"', from=6-5, to=7-5]
	\arrow["{[\upsilon,\tau]}", from=6-7, to=7-7]
	\arrow[from=7-1, to=8-1]
	\arrow[from=7-5, to=8-5]
	\arrow[from=7-7, to=8-7]
	\arrow["{\alpha_1(\upsilon, \nu)\cdot\id}"', from=8-1, to=8-3]
	\arrow["{\alpha_2(\upsilon,\nu)\cdot\id}"', from=8-5, to=8-7]
	\arrow["{\alpha_{\upsilon, \nu, \tau}}", from=10-1, to=10-3]
	\arrow["{[\upsilon, \nu]\id}"', from=10-1, to=11-1]
	\arrow["{\id[\nu, \tau]}", from=10-3, to=11-3]
	\arrow["{\alpha_{\upsilon, \tau, \tau}}", from=10-5, to=10-7]
	\arrow["{[\upsilon, \tau]\id}"', from=10-5, to=11-5]
	\arrow["{\id[\tau]}", from=10-7, to=11-7]
	\arrow["{[\upsilon\times\nu, \tau]}"', from=11-1, to=13-1]
	\arrow["{[\upsilon, \tau]}", from=11-3, to=12-3]
	\arrow["{[\tau]}"', from=11-5, to=12-5]
	\arrow["{[\upsilon, \mu]}", from=11-7, to=13-7]
	\arrow[from=12-3, to=13-3]
	\arrow[from=12-5, to=13-5]
	\arrow["{\alpha_3(\upsilon, \nu)\cdot \id}"', from=13-1, to=13-3]
	\arrow["{\beta_1(\upsilon)}"', from=13-5, to=13-7]
\end{tikzcd}\]

\[\begin{tikzcd}[column sep=2em]
	{(\tau\upsilon)\tau} && {\tau(\upsilon\tau)} && {(\tau\tau)\upsilon} && {\tau(\tau\upsilon)} \\
	{L^2(\upsilon)\cdot(\tau\tau)} && {L^2(\upsilon)\cdot(\tau\tau)} && {H\cdot (\mu\upsilon)} && {L^2(\upsilon)\cdot(\tau\tau)} \\
	{L^2(\upsilon)\cdot (H\cdot \mu)} && {L^2(\upsilon)\cdot (H\cdot \mu)} &&&& {L^2(\upsilon)\cdot (H\cdot \mu)} \\
	{L^2(\upsilon, H)\cdot \mu} && {L^2(\upsilon, H)\cdot \mu} && {H\cdot (\mu\times\upsilon)} && {L^2(\upsilon, H)\cdot \mu}
	\arrow["{\alpha_{\tau, \upsilon,\tau}}", from=1-1, to=1-3]
	\arrow["{[\tau,\upsilon]\id}"', from=1-1, to=2-1]
	\arrow["{\id[\upsilon,\tau]}", from=1-3, to=2-3]
	\arrow["{\alpha_{\tau,\tau,\upsilon}}", from=1-5, to=1-7]
	\arrow["{[\tau]\id}"', from=1-5, to=2-5]
	\arrow["{\id[\tau,\upsilon]}", from=1-7, to=2-7]
	\arrow["{[\tau]}"', from=2-1, to=3-1]
	\arrow["{[\tau]}", from=2-3, to=3-3]
	\arrow["{[\mu,\upsilon]}"', from=2-5, to=4-5]
	\arrow["{[\tau]}", from=2-7, to=3-7]
	\arrow[from=3-1, to=4-1]
	\arrow[from=3-3, to=4-3]
	\arrow[from=3-7, to=4-7]
	\arrow["{\beta_2(\upsilon)}"', from=4-1, to=4-3]
	\arrow["{\beta_3(\upsilon)}"', from=4-5, to=4-7]
\end{tikzcd}\]

A priori, these are all unitary maps of Hilbert spaces. Chasing the $C_0(G)$-actions along the diagrams, we realize that, for any $\phi\in C_0(G)$,
\begin{align}
    \alpha(\upsilon,\nu,\eta)\Big(\phi\big(p(t)+n(s)+z(r)\big)f(t,s,r)\Big) &= \phi\big(p(t)+n(s)+z(r)\big)\alpha(\upsilon,\nu,\eta)\big(f(t,s,r)\big)\\
	\beta_1(\upsilon)\big(\phi(x)f(t,x)\big) &= \label{eq: ChaseC0ActionFirst} \phi\big(p(t)+x\big)\beta_1(\upsilon)\big(f(t,x)\big)\\
	\beta_2(\upsilon)\big(\phi(x)f(t,x)\big) &= \phi(x)\beta_2(\upsilon)\big(f(t,x)\big)\\
	\beta_3(\upsilon)\big(\phi(x+p(t))f(x,t)\big) &= \phi(x)\beta_3(\upsilon)\big(f(x,t)\big).\label{eq: ChaseC0ActionLast}
\end{align}
These relations, together with the naturality of the tensor product, constrain the form of the operators involved. For an object $\int^{\oplus}_{c\in C}\delta_{q(c)}\mathrm{d}\rho(c)\in \Rep(C_0(G))$ and $K$ a Hilbert space, we define the shift operators
\[
\begin{array}{cccc}
	\sigma_L: &L^2(\rho\times \mu, K) &\to &L^2(\rho\times \mu, K)\\
	&f(c,x)&\mapsto &f\big(c,x+q(c)\big)
\end{array}\hspace{2cm}
\begin{array}{cccc}
	\sigma_R: &L^2(\mu\times \rho, K) &\to &L^2(\mu \times\rho, K)\\
	&f(x, c)&\mapsto &f\big(x+q(c), c\big).
\end{array}
\]
Whenever it is clear from the context, we will drop the subscripts $L$ or $R$ from the notation, but they are relevant whenever $\int^{\oplus}_{c\in C}\delta_{q(c)}\mathrm{d}\rho(c) = \int^{\oplus}_{x\in G}\delta_{x}\mathrm{d}\mu(x)$. We need the following well-known~result.

\begin{lemma}\label{lemm: CommutantL2}
 Let $X$ be a topological space with a Borel measure $\upsilon$, and $K$ a Hilbert space. Then, on $L^2(X,\upsilon, K)$, we have $C_0(X)' =  L^\infty(X,\upsilon,  B(K))$ and $C_0(X)'' = L^\infty(X,\upsilon).$
 \end{lemma}

By the previous lemma, given $p:T\to X$ a continuous map and $\upsilon$ a measure on $T$, it holds that, as operators on $L^2(\upsilon)$, we have inclusions $C_0(X)''\subset C_0(T)'' = L^\infty(\upsilon).$

\begin{lemma}\label{lemm: CharacterizationCommuting}
	Let $\int^{\oplus}_{t\in T}\delta_{p(t)}\mathrm{d}\upsilon(t),\, \int^{\oplus}_{s\in S}\delta_{n(s)}\mathrm{d}\nu(s),\, \int^{\oplus}_{r\in R}\delta_{z(r)}\mathrm{d}\eta(r)\in \Ccal_0$. Then, there exist functions
	\begin{align*}
		a(\upsilon, \nu, \eta)&\in L^\infty(T\times S\times R, \upsilon\times\nu\times \eta, U(1))\\
		a_1(\upsilon, \nu), a_2(\upsilon,\nu), a_3(\upsilon, \nu)&\in L^\infty(T\times S, \upsilon\times\nu, U(1))\\
		b_1(\upsilon), b_2(\upsilon), b_3(\upsilon)&\in L^\infty\big(T\times G, \upsilon\times \mu, U(H)\big)
	\end{align*}
	such that $\alpha(\upsilon,\nu,\eta), \alpha_i(\upsilon, \nu),\sigma^{-1}\circ\beta_1(\upsilon), \beta_2(\upsilon), \sigma\circ\beta_3(\upsilon)$ are given by multiplication by the corresponding function.
\end{lemma}
\begin{proof}
	By naturality of the coordinates and the associators, the operators $\alpha(\upsilon,\nu,\eta)$ and $\alpha_i(\upsilon, \nu)$ and $\sigma^{-1}\circ\beta_1(\upsilon), \beta_2(\upsilon), \sigma\circ\beta_3(\upsilon)$ lie in the double commutant of $C_0(G^3)$, $C_0(G^2)$ or $C_0(G)$, respectively. In addition, by Equations \eqref{eq: ChaseC0ActionFirst} to \eqref{eq: ChaseC0ActionLast}, $\sigma^{-1}\circ\beta_1(\upsilon), \beta_2(\upsilon), \sigma\circ\beta_3(\upsilon)$ lie in the commutant of $C_0(G)$ acting on the entry with the Haar measure. Hence, all these operators are given by multiplication by an $L^\infty$-function. Since the associators are required to be unitary by definition of a $\mathrm{W}^*$-tensor category, they take values in $U(1)$ or $U(H)$.
\end{proof}

In terms of the morphisms defined in Lemma \ref{lemm: CharacterizationCommuting}, the pentagon equations read as follows. The following are equalities of $L^\infty$-functions with respect to the obvious measures, or equalities of operators on $L^2$-spaces. In order to avoid cluttering the notation, we write $x+t$ in place of $x+p(t)$ for $x\in G$ and $t\in T$. We also identify $\mathbb{\CC}$ with its image in $B(H)$. We take another object $\int^\oplus_{c\in C}\delta_{q(c)}\mathrm{d}{\rho}(c)\in \Rep(C_0(G))$.

\begin{align}
\label{eq: Pentagon0}
&a(\nu, \eta, \rho)(s,r,v)\cdot
a(\upsilon, \nu\times\eta, \rho)(t,s,r,v)\cdot
a(\upsilon,\nu,\eta)(t,s,r) \nonumber\\
&\qquad=
a(\upsilon,\nu,\eta\times\rho)(t,s,r,v)\cdot
a(\upsilon\times\nu,\eta,\rho)(t,s,r,v)
\\[0.3em]
\label{eq: Pentagon1.4}
&a_3(\nu,\eta)(s,r)\cdot
a_3(\upsilon,\nu\times\eta)(t,s,r)\cdot
a(\upsilon,\nu,\eta)(t,s,r)=
a_3(\upsilon,\nu)(t,s)\cdot
a_3(\upsilon\times\nu,\eta)(t,s,r)
\\[0.3em]
\label{eq: Pentagon1.1}
&a(\upsilon,\nu,\eta)(t,s,r)\cdot
a_1(\upsilon\times\nu,\eta)(t,s,r)\cdot
a_1(\upsilon,\nu)(t,s)=
a_1(\upsilon,\nu\times\eta)(t,s,r)\cdot
a_1(\nu,\eta)(s,r)
\\[0.3em]
\label{eq: Pentagon1.2}
&a_2(\upsilon,\eta)(t,r)\cdot
a_2(\upsilon,\nu)(t,s)
=
a_2(\upsilon,\nu\times\eta)(t,s,r)
\\
\label{eq: Pentagon1.3}
&a_2(\nu,\eta)(s,r)\cdot
a_2(\upsilon,\eta)(t,r)
=
a_2(\upsilon\times\nu,\eta)(t,s,r)
\\[0.3em]
\label{eq: Pentagon2.34}
&a(\upsilon,\nu,\mu)(t,s,-s-t+x)\circ
b_1(\upsilon\times\nu)(t,s,x)\nonumber\\
&\qquad=
b_1(\nu)(s,-t+x)\circ
b_1(\upsilon)(t,x)\circ
a_3(\upsilon,\nu)(t,s)
\\[0.3em]
\label{eq: Pentagon2.12}
&a_1(\upsilon,\nu)(t,s)\circ
b_3(\nu)(s,t+x)\circ
b_3(\upsilon)(t,x)=
b_3(\upsilon\times\nu)(t,s,x)\circ
a(\mu,\upsilon,\nu)(x,t,s)
\\
\label{eq: Pentagon2.24}
&b_2(\nu)(s,x)\circ
a_2(\upsilon,\nu)(t,s)
=
b_2(\nu)(s,t+x)
\\
\label{eq: Pentagon2.13}
&a_2(\upsilon,\nu)(t,s)\circ
b_2(\upsilon)(t,x-s)
=
b_2(\upsilon)(t,x)
\\[0.3em]
\label{eq: Pentagon2.23}
&b_3(\nu)(s,-t+x)\circ
a(\upsilon,\mu,\nu)(t,-t+x,s)\circ
b_1(\upsilon)(t,x)=
b_1(\upsilon)(t,x+s)\circ
b_3(\nu)(s,x)
\\
\label{eq: Pentagon2.14}
&a_3(\upsilon,\nu)(t,s)\circ
b_2(\upsilon\times\nu)(t,s,x)\circ
a_1(\upsilon,\nu)(t,s)=
b_2(\upsilon)(t,x)\circ
b_2(\nu)(s,x)
\\[0.3em]
\label{eq: Pentagon3.1}
&\big[\id\otimes\gamma\big]\circ
\big[\alpha_3(\upsilon,\mu)\big]\circ
\beta_1(\upsilon)
=
\alpha_2(\upsilon,\mu)\circ
\big[\id\otimes\gamma\big]\\
\label{eq: Pentagon3.4}
&\beta_3(\upsilon)\circ
\alpha_1(\mu,\upsilon)\circ
\big[\id\otimes\gamma\big]
=
\big[\id\otimes\gamma\big]\circ
\alpha_2(\mu,\upsilon)\\
\label{eq: Pentagon3.2}
&\beta_1(\upsilon)\circ
\big[\id\otimes\gamma\big]\circ
\beta_2(\upsilon)
=
\alpha_1(\upsilon,\mu)\circ
\big[\id\otimes\gamma\big]
\\
\label{eq: Pentagon3.3}
&\beta_2(\upsilon)\circ
\big[\id\otimes\gamma\big]\circ
\beta_3(\upsilon)
=
\big[\id\otimes\gamma\big]\circ
\alpha_3(\mu,\upsilon).
\end{align}
Denoting by $\sigma_H$ the endomorphism of $L^2(\mu\times \mu, H\otimes H)$ given by postcomposition with the swap map 
\[
\begin{array}{ccc}
    H\otimes H & \to &H\otimes H  \\
    h_1\otimes h_2&\mapsto &h_2\otimes h_1,
\end{array}
\]
the missing pentagon equation reads, for any $f\in L^2(\mu\times\mu, H\otimes H)$,
\begin{align}
	\label{eq: Pentagon4}
	\big[\gamma\otimes \id \big]&\Big([\id \otimes b_2(\mu)(x,y)]\circ \big[\gamma\otimes \id\big](f)(x,y)\Big)\\ & \nonumber =\sigma_H\circ [b_3(\mu)(x, -x+y)\otimes \id]\circ[\id\otimes b_1(\mu)(-x+y, y)]f(-x+y,y).
\end{align}

We now describe the effect of changing coordinates in the functions $a, a_i$ and $b_i$ for $i  =1,2,3$. Define new natural isomorphisms  $[\upsilon, \nu]', [\upsilon, \tau]',  [\tau, \upsilon]', [\tau]' $ given by
\begin{align*}
	[-, -]' = & \theta \circ [-, -]\\
	[-, \tau]' = & \varphi\circ [-,\tau]\\ 
	[\tau, -]'  = & \psi\circ [\tau, -]\\
	[\tau]' = & \omega\circ  [\tau],
\end{align*}
where we have introduced natural isomorphisms $\theta: -\times-\overset{\cong}{\to}-\times-$ and $\varphi,\psi: \text{Forget}\xrightarrow{\cong}\text{Forget}$, as well as $\omega: L^2(\mu, H)\xrightarrow{\cong}L^2(\mu, H)$. The new primed morphisms induce primed operators $\alpha', \alpha_i', \beta_i', \gamma'$ and new functions $a', a_i', b_i'$. Let the transformations $\theta$, $\phi$, $\psi$ be given by multiplication by a $U(1)$-valued $L^\infty$-function which we denote by the same symbol. Also, $\omega$ is given by multiplication by an $L^\infty$-function $G\to U(H)$. Then, the coefficients of the associators get changed to

\begin{align}
	\label{eq: ChangeCoordinates0}
	\theta(\nu, \eta)(s,r)\cdot \theta(\upsilon , \nu\times \eta)&(t, s, r)\cdot a(\upsilon, \nu, \eta)(t,s,r) \nonumber\\ 	 = a'(\upsilon, \nu, \eta)(t,s,r)&\cdot \theta(\upsilon, \nu)(t,s)\cdot \theta(\upsilon\times \nu, \eta)(t,s,r)\\
	\label{eq: ChangeCoordinates1.1}
	\theta(\upsilon, \nu)(t,s)\cdot \psi(\upsilon\times\nu)(t,s)\cdot a_1(\upsilon, \nu)(t,s) & = a_1'(\upsilon, \nu)(t,s)\cdot \psi(\upsilon)(t)\cdot \psi(\nu)(s)\\
	\label{eq: ChangeCoordinates1.2}
	\alpha_2(\upsilon, \nu) &= \alpha_2'(\upsilon, \nu)\\
	\label{eq: ChangeCoordinates1.3}
	\big[\varphi(\nu)\circ p_2\big]\circ \big[\varphi(\upsilon)\circ p_1\big]\circ \alpha_3(\upsilon, \nu) &= \alpha_3'(\upsilon, \nu)\circ\theta(\upsilon, \nu)\circ \varphi(\upsilon\times\nu)\\
	\label{eq: ChangeCoordinates2.1}
	\big[\omega\circ p_2\big]\circ \theta(\upsilon,\mu)\circ \beta_1(\upsilon)&=\beta_1'(\upsilon)\circ \big[\varphi(\upsilon)\circ p_1\big]\circ\big[\omega\circ p_2\big]\\
	\label{eq: ChangeCoordinates2.2}
	\big[\varphi(\upsilon)\circ p_1\big]\circ \big[\omega\circ p_2\big]\circ \beta_2(\upsilon)&=\beta_2'(\upsilon)\circ \big[\omega\circ p_2\big]\circ \big[\psi(\upsilon)\circ p_1\big]\\
	\label{eq: ChangeCoordinates2.3}
	\big[\psi(\upsilon)\circ p_1\big]\circ \big[\omega\circ p_2\big]\circ \beta_3(\upsilon) &= \beta_3'(\upsilon)\circ \big[\omega\circ p_2\big]\circ \theta(\mu, \upsilon)\\
	\label{eq: ChangeCoordinates3}
	\omega\circ \psi(\mu)\circ \gamma &= \gamma'\circ \omega\circ \varphi(\mu).
\end{align}

The rest of this section is devoted to showing that we can change the coordinates so that the operators defining the associators of $\Ccal$ have the same form as one of the categories $\TYcal(G, \chi,\xi)$ defined above. To do this, we have to carefully pick the functions $\theta, \psi, \varphi, \omega$. We will do this in different steps. Our proof strategy is similar to that in \cite{TY}.  First, we pick a non-trivial $\theta$ and trivial $\varphi, \psi$, and $\omega$ and compute the form of some of the new associators given the new set of coordinates, in Proposition \ref{prop: FirstStep}. We then start from the new set of coordinates and modify them again by picking a non-trivial function $\omega$ and trivial functions for the rest of $\theta, \varphi$, and $\psi$. In Proposition \ref{prop: SecondStep} we describe the new form of the associators after this second change of coordinates. The new form of the associators allows us to prove in Theorem \ref{thm: HisC} that the Hilbert space $H$ is one-dimensional, and also to identify the continuous symmetric nondegenerate bicharacter $\chi$. We then perform a last step by making a new change of coordinates by choosing a non-trivial $\psi$, but trivial $\theta, \varphi$, and $\omega$. This allows us to identify the sign $\xi\in\{\pm1\}$, and in Theorem \ref{thm: ThirdStep}, we show that this last change of coordinates produces exactly the associators of $\TYcal(G, \chi,\xi)$. The main difference from the finite setting is that, in most cases, we work with $L^\infty$-functions on non-Dirac measures. Therefore, we cannot make pointwise arguments, and we have to find alternative ways to obtain the needed changes of coordinates. Continuity comes into the picture by the fact that measurable group homomorphisms are necessarily continuous \cite{sasvari}. We defer some technical results to Appendix \ref{App A} in order not to overcrowd this section.

Let us perform the first change of coordinates, as described above.

\begin{proposition}\label{prop: FirstStep}
	There exists a change of coordinates for which, for all $\int^{\oplus}_{t\in T}\delta_{p(t)}\mathrm{d}\upsilon(t)$, $\int^{\oplus}_{s\in S}\delta_{n(s)}\mathrm{d}\nu(s)$ and $\int^{\oplus}_{r\in R}\delta_{z(r)}\mathrm{d}\eta(r)$ in $\Ccal_0$,
	\[
	a(\upsilon,\nu, \eta)\equiv 1, \hspace{1cm} a_3(\upsilon, \nu)\equiv 1.
	\]
\end{proposition}
\begin{proof}
	We can pick the change of coordinates
\[
\theta(\upsilon, \nu) := a_3(\upsilon, \nu) \hspace{1cm} \varphi(\upsilon) := \id, \hspace{1cm}\psi(\upsilon) := \id, \hspace*{1cm}\omega := \id.
\]
By Equation \eqref{eq: ChangeCoordinates1.3}, we obtain $\alpha_3'(\upsilon, \nu) = 1$ and by Equations \eqref{eq: Pentagon1.4} and \eqref{eq: ChangeCoordinates0}, we obtain $a'(\upsilon, \nu, \eta) = 1$. The claim follows.
\end{proof}

The second step consists of the following result.

\begin{proposition}\label{prop: SecondStep}
	There is a change of coordinates for which, for all $\int^{\oplus}_{t\in T}\delta_{p(t)}\mathrm{d}\upsilon(t)$, $\int^{\oplus}_{s\in S}\delta_{n(s)}\mathrm{d}\nu(s)$ and $\int^{\oplus}_{r\in R}\delta_{z(r)}\mathrm{d}\eta(r)$ in $\Ccal_0$,
	\begin{enumerate}
		\item$a(\upsilon, \nu, \eta)$ and $a_3(\upsilon, \nu)$ are trivialized,
		\item $\beta_1(\upsilon) = \sigma_L$ is the shift operator,
		\item $ a_2(\mu, \mu)$ can be represented by a continuous bicharacter $\chi$.
	\end{enumerate}
\end{proposition}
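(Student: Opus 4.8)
The plan is to realize the entire change of coordinates through the single function $\omega: G\to U(H)$, keeping $\theta=\varphi=\psi=\id$, and to exploit the fact that the three assertions decouple: parts (1) and (3) are insensitive to $\omega$, whereas part (2) is exactly what pins $\omega$ down. Indeed, with $\theta=\varphi=\id$ Equation \eqref{eq: ChangeCoordinates0} forces $a'(\lambda,\nu,\eta)=a(\lambda,\nu,\eta)$ and Equation \eqref{eq: ChangeCoordinates1.3} forces $\alpha_3'(\lambda,\nu)=\alpha_3(\lambda,\nu)$, so the trivializations $a\equiv 1$, $a_3\equiv 1$ coming from Proposition \ref{prop: FirstStep} survive; moreover Equation \eqref{eq: ChangeCoordinates1.2} shows $a_2$ is invariant under \emph{every} change of coordinates, so part (3) can be settled once and for all, independently of $\omega$.

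For part (3) I would specialize to $\lambda=\nu=\mu$, where $T=S=G$ and $p=q=\id$, so that $a_2(\mu,\mu)$ and $b_2(\mu)$ are honest $L^\infty$ functions on $G\times G$ valued in $U(1)$ and $U(H)$. Since $U(1)$ is central in $U(H)$, Equation \eqref{eq: Pentagon2.24} reads $a_2(\mu,\mu)(t,s)\,b_2(\mu)(s,x)=b_2(\mu)(s,tx)$, whence $a_2(\mu,\mu)(t,s)=b_2(\mu)(s,tx)\,b_2(\mu)(s,x)^{-1}$ almost everywhere. A short cocycle computation (substituting $x\mapsto t_2 x$, legitimate a.e. by Fubini) then yields $a_2(\mu,\mu)(t_1t_2,s)=a_2(\mu,\mu)(t_1,s)\,a_2(\mu,\mu)(t_2,s)$, i.e. multiplicativity in the first variable, and Equation \eqref{eq: Pentagon2.13} gives multiplicativity in the second by the same argument. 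Thus $a_2(\mu,\mu)$ is an a.e.-defined measurable bicharacter, and the automatic continuity of measurable homomorphisms \cite{sasvari} lets me replace it, up to a null set, by a continuous bicharacter $\chi: G\times G\to U(1)$; the passage from separate measurability in each slot to a single genuinely continuous bicharacter is the technical point I would defer to the appendix (Lemma \ref{Lemm: Function P}).

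For part (2) I would use the cocycle relation to which \eqref{eq: Pentagon2.34} collapses after Proposition \ref{prop: FirstStep}, namely $b_1(\nu)(s,t^{-1}x)\circ b_1(\lambda)(t,x)=b_1(\lambda\times\nu)(t,s,x)$. Taking $\lambda=\nu=\mu$ and identifying $b_1(\mu\times\mu)(t,s,x)$ with $b_1(\mu)(ts,x)$ by naturality turns this into a functional equation for the single $U(H)$-valued function $b_1(\mu)$ on $G\times G$. The appendix Lemma \ref{lemm: SolutionToEquationL2} solves precisely such equations in the $L^\infty$ setting, producing a measurable $\omega:G\to U(H)$ that exhibits $b_1$ as the associated coboundary. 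Taking this $\omega$ as the change of coordinates, Equation \eqref{eq: ChangeCoordinates2.1} (with $\theta=\varphi=\id$) becomes the conjugation $\beta_1'(\lambda)=[\omega\circ p_2]\circ\beta_1(\lambda)\circ[\omega\circ p_2]^{-1}$, and the coboundary identity is exactly what cancels the residual multiplication operator so that $\beta_1'(\lambda)=\sigma_L$ for every $\lambda$, the general case following from $\lambda=\mu$ by naturality. Because $\omega$ does not enter \eqref{eq: ChangeCoordinates0}, \eqref{eq: ChangeCoordinates1.2} or \eqref{eq: ChangeCoordinates1.3}, this choice leaves $a$, $a_3$ and $a_2$ untouched, so all three conclusions hold simultaneously.

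The main obstacle is the absence of pointwise evaluation: every identity above holds only almost everywhere, so both the bicharacter property of $a_2(\mu,\mu)$ and the coboundary trivialization of $b_1$ must be extracted from $L^\infty$ functional equations rather than from values at individual group elements. Converting these a.e. identities into honest continuous data — a continuous bicharacter $\chi$ and a genuine $\omega:G\to U(H)$ — is exactly what forces the detour through \cite{sasvari} and the appendix Lemmas \ref{lemm: SolutionToEquationL2} and \ref{Lemm: Function P}, and is where essentially all of the work of this step lies.
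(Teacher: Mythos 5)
Your proposal follows the paper's proof almost exactly for parts (1) and (2): the paper likewise performs this step with $\theta=\varphi=\psi=\id$ and a nontrivial $\omega$ only, reduces \eqref{eq: Pentagon2.34} to the cocycle equation $b_1(\mu)(y,x^{-1}z)\cdot b_1(\mu)(x,z)=b_1(\mu)(xy,z)$ via the naturality statement $b_1(\lambda\times\mu)(t,x,y)=b_1(\mu)(p(t)x,y)$ (Lemma \ref{lemma: beta1a2}), solves it with Lemma \ref{lemm: SolutionToEquationL2} to get $b_1(\mu)(x,y)=B(x^{-1}y)^{-1}B(y)$, sets $\omega:=B$, and checks through \eqref{eq: ChangeCoordinates2.1} that $\beta_1'(\mu)=\sigma_L$, with $b_1(\lambda)=\id$ for general $\lambda$ then forced by \eqref{eq: Pentagon2.34} and Lemma \ref{lemma: beta1a2}. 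Your observation that \eqref{eq: ChangeCoordinates0}, \eqref{eq: ChangeCoordinates1.2} and \eqref{eq: ChangeCoordinates1.3} are insensitive to $\omega$, so the three conclusions hold simultaneously, is also how the paper argues. For part (3) you take a genuinely different (and equally valid) route: you extract multiplicativity of $a_2(\mu,\mu)$ in each variable from \eqref{eq: Pentagon2.24} and \eqref{eq: Pentagon2.13} via a coboundary computation with $b_2$ (using centrality of $U(1)$ in $U(H)$), whereas the paper uses \eqref{eq: Pentagon1.2} and \eqref{eq: Pentagon1.3} together with Lemma \ref{lemma: beta1a2}; your version has the small advantage of not needing the $a_2$ clauses of that lemma, at the cost of routing through the $U(H)$-valued function $b_2$. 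One concrete correction: the passage from an almost-everywhere-defined a.e.\ homomorphism to a genuine measurable (hence, by \cite{sasvari}, continuous) bicharacter is \emph{not} what Lemma \ref{Lemm: Function P} provides — that lemma produces the splitting function $P(\lambda)$ used later for $b_2$ — and the paper instead invokes \cite[Cor.\ 5.3]{ramsay} for this step. You correctly identify that this is where the technical work lies, but you would need Ramsay's result (or an equivalent) rather than the appendix lemma you cite.
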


The proof of Proposition \ref{prop: SecondStep} will depend on the following lemma, which characterizes $b_1$ and $a_2$ on products of a measure with the Haar measure.

\begin{lemma}\label{lemma: beta1a2}
	Let $\int^{\oplus}_{t\in T}\delta_{p(t)}\mathrm{d}\upsilon(t)\in \Ccal_0$. For $\upsilon\times\mu\times \mu$-almost every $(t,x,y)\in T\times G\times G$,
	$$b_1(\upsilon\times \mu)(t,x,y) = b_1(\mu)(p(t)+x,y)$$
		\[
	a_2(\upsilon\times \mu, \mu)(t,x,y) = a_2(\mu, \mu)(p(t)+x, y),\hspace*{1cm} a_2(\mu, \upsilon\times \mu)(t,x,y) = a_2(\mu, \mu)(x, p(t)+y).
	\]
\end{lemma}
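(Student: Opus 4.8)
The three identities share a single mechanism: because $\mu$ is the Haar measure it is translation invariant, so every piece of associator data attached to $\lambda\times\mu$ ought to depend on the pair $(t,x)$ only through the base point $p(t)x$. The plan is to extract the two $a_2$-identities directly from the pentagon relations \eqref{eq: Pentagon2.24} and \eqref{eq: Pentagon2.13}, which works precisely because $a_2$ is scalar-valued, and to treat the $b_1$-identity by a naturality and amplification argument, where the genuine work lies.

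For $a_2(\lambda\times\mu,\mu)(t,x,y)=a_2(\mu,\mu)(p(t)x,y)$ I would read \eqref{eq: Pentagon2.24} for a general first slot $\kappa$ with base-point map $p_\kappa$ and second slot $\mu$. Since $a_2$ is $U(1)$-valued it commutes with the $U(H)$-valued operator $b_2(\mu)$, and the relation then says that multiplication by $a_2(\kappa,\mu)$ records exactly the effect of translating the last argument of $b_2(\mu)$ by $p_\kappa$. Specializing once to $\kappa=\lambda\times\mu$, whose base point at $(t,x)$ is $p(t)x$, and once to $\kappa=\mu$, whose base point is the $G$-coordinate itself, and then evaluating the second instance at the element $p(t)x$, I obtain two identities with identical right-hand sides; cancelling the invertible operator $b_2(\mu)$ gives the claim. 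The identity $a_2(\mu,\lambda\times\mu)(t,x,y)=a_2(\mu,\mu)(x,p(t)y)$ follows symmetrically from \eqref{eq: Pentagon2.13}, specializing the second slot instead. In both cases the various almost-everywhere quantifiers combine by Fubini into the stated $\lambda\times\mu\times\mu$-a.e.\ equality, and at no point is an $L^\infty$ function evaluated at a point: the cancellation is purely algebraic, which is what allows the argument to survive the non-atomic measure $\mu$.

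The identity $b_1(\lambda\times\mu)(t,x,y)=b_1(\mu)(p(t)x,y)$ cannot be obtained this way, because the pentagon relation \eqref{eq: Pentagon2.34} expresses $b_1(\lambda\times\mu)$ through the unknown $b_1(\lambda)$ with no cancellation available. Instead I would use that the shift operator $\sigma_L$ is an isomorphism $L^2(\lambda)\otimes\mu\xrightarrow{\cong}\lambda\times\mu$ in $\Ccal_0$: using invariance of the Haar measure one checks that $\sigma_L$ intertwines the action $(\phi f)(t,x)=\phi(x)f(t,x)$ on $L^2(\lambda)\otimes\mu$ with the action $(\phi f)(t,x)=\phi(p(t)x)f(t,x)$ on $\lambda\times\mu$. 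Now $L^2(\lambda)\otimes\mu$ is, as an object of $\Ccal_0$, an amplification of $\mu$, i.e.\ a $\mathrm{W}^*$-direct sum of countably many copies of $\mu$. Since the associator is a bounded natural isomorphism of $\mathrm{W}^*$-functors it commutes with $\mathrm{W}^*$-direct sums and is therefore block diagonal over this decomposition, equal to $\alpha_{\mu,\tau,\tau}$ on each summand; passing to coordinates yields $b_1(L^2(\lambda)\otimes\mu)(t,x,y)=b_1(\mu)(x,y)$, independent of $t$. Transporting this equality along $\sigma_L$ by naturality of $\alpha_{-,\tau,\tau}$ replaces $x$ by $p(t)x$ in the first argument of $b_1(\mu)$, giving the claim.

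The main obstacle will be the bookkeeping in this last transport step. The function $b_1$ is defined only after stripping off the intrinsic shift $\sigma$ built into $\beta_1$, and that intrinsic shift differs between $L^2(\lambda)\otimes\mu$ (shift by $x$) and $\lambda\times\mu$ (shift by $p(t)x$); reconciling these two shifts with the transport isomorphism $\sigma_L$ is exactly what produces the factor $p(t)$, and it must be carried out carefully on the level of the Hilbert-module coordinates. I also need the amplification principle—that a bounded natural transformation of $\mathrm{W}^*$-functors is block diagonal over an infinite $\mathrm{W}^*$-direct sum—which follows from normality of $\mathrm{W}^*$-functors together with boundedness of the transformation; I would record this as a short lemma or defer it to Appendix \ref{App A}. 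By contrast, the two $a_2$-identities are essentially formal once the right specializations of \eqref{eq: Pentagon2.24} and \eqref{eq: Pentagon2.13} are chosen.
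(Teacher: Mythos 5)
Your proof is correct, but it takes a genuinely different route from the paper's, which disposes of all three identities with a single uniform naturality argument: every $f\in L^2(\lambda)$ induces a morphism $\sigma_f\colon \mu\to\lambda\times\mu$, $g\mapsto f(t)g\big(p(t)x\big)$, in $\Ccal_0$, and naturality of the associators and the coordinates with respect to $\sigma_f$ placed in the appropriate tensor slot yields $f(t)\,b_1(\mu)(p(t)x,y)=f(t)\,b_1(\lambda\times\mu)(t,x,y)$ almost everywhere; since $f$ is arbitrary the first identity follows, and the two $a_2$ identities are obtained by the same device. Your treatment of $b_1$ is in effect a factorization of this argument: $\sigma_f$ is the composite of the isometry $g\mapsto f\otimes g$ into the amplification $L^2(\lambda)\cdot\mu$ with your intertwiner $\sigma_L\colon L^2(\lambda)\cdot\mu\xrightarrow{\cong}\lambda\times\mu$, so your block-diagonality step (naturality against the inclusions and projections of the direct-sum decomposition, together with boundedness) and the subsequent transport along $\sigma_L$ recombine into the paper's single commuting rectangle; the shift bookkeeping you flag does close up, since conjugating $\beta_1=\sigma_L\circ M_{b_1}$ for the two base-point maps $(t,x)\mapsto x$ and $(t,x)\mapsto p(t)x$ by the induced map $f(t,x,y)\mapsto f(t,xp(t),y)$ produces exactly $b_1(\lambda\times\mu)(t,x,y)=b_1(\mu)(p(t)x,y)$ after a measure-preserving substitution in the last variable. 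Your derivation of the $a_2$ identities from \eqref{eq: Pentagon2.24} and \eqref{eq: Pentagon2.13} is also valid and not circular (those equations are established before the lemma and hold with arbitrary objects in the slots), provided one justifies substituting $w=p(t)x$ into an almost-everywhere identity; this works because, by translation invariance of $\mu$, the pushforward of $\lambda\times\mu$ under $(t,x)\mapsto p(t)x$ is absolutely continuous with respect to Haar measure. The paper's route buys uniformity and economy, needing neither an amplification lemma nor the pentagon; yours makes the $a_2$ statements purely formal consequences of identities already on the page, at the cost of an extra lemma for the $b_1$ case.
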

\begin{proof}
	We prove the first equality, the other two follow from similar arguments. A function $f\in L^2(\upsilon)$ induces a morphism 
	\[
	\begin{array}{cccc}
		\sigma_f:&L^2(\mu)&\to&L^2(\upsilon\times\mu)\\
		&g&\mapsto& f(t)g\big(p(t)+x\big).
	\end{array}
	\]
	By naturality of the associators and the coordinates, the first square in
\[\begin{tikzcd}
	{L^2(\mu\times\mu, H)} && {L^2(\mu\times\mu, H)} & {L^2(\mu\times\mu, H)} \\
	{L^2((\upsilon\times\mu)\times\mu, H)} && {L^2((\upsilon\times \mu)\times\mu, H)} & {L^2((\upsilon\times\mu)\times\mu, H)}
	\arrow["{\beta_1(\mu)}", from=1-1, to=1-3]
	\arrow["{\sigma_f\otimes\id}"', from=1-1, to=2-1]
	\arrow["{\sigma_L^{-1}}", from=1-3, to=1-4]
	\arrow["{\sigma_f\otimes\id}"', from=1-3, to=2-3]
	\arrow["{\sigma_f\otimes_\id}", from=1-4, to=2-4]
	\arrow["{\beta_1(\upsilon\times\mu)}"', from=2-1, to=2-3]
	\arrow["{\sigma_L^{-1}}"', from=2-3, to=2-4]
\end{tikzcd}\]
	commutes. By definition, the second square also commutes. Hence, the outer rectangle commutes, meaning that
    \[
    f(t)b_1(\mu)(p(t)+x,y) = f(t)b_1(\upsilon\times \mu)(t, x, y)
    \]
    for almost every $(t,x,y)\in T\times G\times G$. Since $f\in L^2(\upsilon)$ was arbitrary, the claim follows.
\end{proof}
We are now ready to prove Proposition \ref{prop: SecondStep}.

\begin{proof}[Proof of Proposition \ref{prop: SecondStep}]
	 By Proposition \ref{prop: FirstStep}, we can assume that the choice of coordinates is such that $a$ and $a_3$ have been trivialized. Let us denote $b:=b_1(\mu)$. Equation \eqref{eq: Pentagon2.34}, together with Lemma \ref{lemma: beta1a2} imply that
	 \begin{equation}\label{Eq b1mu}
	 	b(y, -x+z)\cdot  b(x,z) = b(x+y, z)
	 \end{equation}
	 as functions in $L^\infty(\mu\times\mu\times\mu, U(H))$. Hence, by Lemma \ref{lemm: SolutionToEquationL2}, we can pick a function  $B\in L^\infty(\mu, U(H))$ such that
	 \[
	 b_1(\mu)(x,y) = b(x,y) = B(-x+y)^{-1}\cdot B(y).
	 \]
	 We define $\omega(x) := B(x)\in L^\infty(\mu, U(H))$. Then, by Equation \eqref{eq: ChangeCoordinates2.1}, 
	 \[
	 b_1'(\mu)(x,x+y) = \omega(y)\cdot b(x,x+y)\cdot  {\omega(x+y)}^{-1} = B(y)\cdot B(y)^{-1}\cdot B(x+y)\cdot B(x+y)^{-1} = \id,
	 \]
	 and hence $\beta'_1(\mu) = \sigma$ is just a shift operator. By Equation \eqref{eq: Pentagon2.34} and Lemma \ref{lemma: beta1a2}, $b_1(\upsilon) = \id$ for any $\upsilon$. This proves $(\mathrm{i})$ and $(\mathrm{ii})$.
	 
	 We proceed similarly for $a_2$. Equations \eqref{eq: Pentagon1.2} and \eqref{eq: Pentagon1.3}, and Lemma \ref{lemma: beta1a2} imply that $a_2(\mu, \mu)$ admits a representative $a: G\times G\to U(1)$ such that
     \begin{align*}
        a(x, y_1+y_2) &= a(x_, y_1)a(x, y_2)\\
        a(x_1+x_2, y) & = a(x_1, y)a(x_2, y)
     \end{align*}
     for almost every $(x, y_1,y_2)\in G^3$ and almost every $(x_1, x_2, y)\in G^3$. In order to show that $a_2(\mu, \mu)$ can be represented by a continuous bicharacter, we want to represent elements of $G$ and $\hat{G}$ as vectors of a separable Hilbert space. Since $G$ is a locally compact Hausdorff second countable group, there exists a finite Borel measure $\lambda$ equivalent to the Haar measure $\mu$. We define the maps
     \[
     \begin{array}{cccc}
         \iota:&\hat{G}&\to &L^2(G, \lambda)\\
         & \eta&\mapsto& \big(x\mapsto\eta(x)\big)
     \end{array}\hspace{2cm}
        \begin{array}{cccc}
         A:&{G}&\to &L^2(G, \lambda)\\
         & y&\mapsto& \big(x\mapsto a(x,y)\big).
     \end{array}
     \]
     The map $\iota$ is continuous by dominated convergence, and it is injective as $\lambda$ has full support and characters are continuous. Since both $\hat{G}$ and $L^2(G, \lambda)$ are Polish spaces, $\iota(\hat{G})$ is Borel by the Lusin-Souslin theorem, and $\iota^{-1}: \iota(\hat{G})\to \hat{G}$ is Borel. The map $A$ is also Borel by Fubini's theorem.

     By hypothesis on $a$, for almost every $y\in G$, the measurable map $x\in G\mapsto a(x,y)\in U(1)$ is a homomorphism almost everywhere. By \cite[Cor. 5.3]{ramsay}, it agrees almost everywhere with a genuine measurable homomorphism $G\to U(1)$, which is continuous by \cite{sasvari}. Hence, $A(y)\in \iota(\hat{G})$ for almost every $y\in G$. Let $Y:=A^{-1}(\iota(\hat{G}))\subset G$, which is a Borel subset, and write $\Phi: G\to \hat{G}$ for the Borel map
     \[\Phi(y) = 
     \begin{cases}
        \iota^{-1}(A(y)),\quad &\text{if $y\in Y$}\\
        1,\quad &\text{otherwise},
     \end{cases}
     \]
     for $1\in\hat{G}$ the trivial character. Again by hypothesis, for almost every $(y_1,y_2)\in G\times G$, $\Phi(y_1+y_2) = \Phi(y_1)\Phi(y_2)$, where we use that the set of pairs $(y_1,y_2)\in Y\times Y$ for which $y_1+y_2\in Y$ is conull. Hence, by \cite[Cor. 5.3]{ramsay}, it agrees almost everywhere with a genuine measurable homomorphism $G\to \hat{G}$, which is continuous by \cite{sasvari}.
     
      For almost every $(x,y)\in G\times Y$, it holds that $\chi(x,y):=\Phi(y)(x) = a(x,y)$ by construction. It holds that $\chi$ is a continuous bicharacter and it agrees with $a$ almost everywhere. Hence, the claim follows.
\end{proof}

Before defining the last change of coordinates $\psi$, we show that the Hilbert space $H$ is necessarily one-dimensional. Let $\int^\oplus_{t\in T}\delta_{p(t)}\mathrm{d}\upsilon(t)\in \Ccal_0$. Equation \eqref{eq: Pentagon2.24} implies
\[
b_2(\upsilon)(t,x)\cdot a_2(\mu, \upsilon)(y,t) = b_2(\upsilon)(t,x+y),
\]
and by Equation \eqref{eq: Pentagon1.3} and Lemma \ref{lemma: beta1a2} it holds that
\[
a_2(\mu, \upsilon)(x,t)\cdot a_2(\mu, \upsilon)(y,t) = a_2(\mu, \upsilon)(x+y,t).
\]
Hence, $\phi:=b_2(\upsilon)$ and $\Pi:= a_2(\mu, \upsilon)$ satisfy the hypotheses of Lemma \ref{Lemm: Function P}, and there exists a function $P(\upsilon)\in L^\infty(T, \upsilon, U(H))$ such that
\[
b_2(\upsilon)(t,x) = a_2(\mu, \upsilon)(x,t)\cdot P(\upsilon)(t).
\]
In the lemma, $P(\upsilon)$ is induced by the function (essentially constant in the variable $x$) $(t,x)\in T\times G\mapsto b_2(\upsilon)(t, x)\cdot a_2(\mu, \upsilon)(x,t)^{-1}$. We prove that the family $P(-)$ is natural, meaning that it defines a unitary natural automorphism of the functor
\[
\begin{array}{ccc}
    \mathcal{C}_0 &\to & \Hilb  \\
     \int^{\oplus}_{t\in T}\delta_{p(t)}\mathrm{d}\upsilon(t)&\mapsto & L^2(T, \upsilon)\otimes H \cong L^2(T, \upsilon, H). 
\end{array}
\]

Let $\int^\oplus_{s\in S}\delta_{n(s)}\mathrm{d}\nu(s)\in\Ccal_0$ be another object and $F: \int^\oplus_{t\in T}\delta_{p(t)}\mathrm{d}\upsilon(t)\to \int^\oplus_{s\in S}\delta_{n(s)}\mathrm{d}\nu(s)$ a morphism of $C_0(G)$-representations. Under the canonical equivalence of Hilbert spaces $L^2(T\times G, \upsilon\times\mu, H)\cong L^2(T, \upsilon)\otimes L^2(G, \mu, H)$, the operator
\[
\begin{array}{cccc}
 Q(\upsilon): L^2(T\times G, \upsilon\times\mu, H)   &\to &L^2(T\times G, \upsilon\times\mu, H)  \\
    f &\mapsto & (t,x)\mapsto a_2(\mu, \upsilon)(x, t)^*\cdot b_2(\upsilon)(t, x)(f) 
\end{array}
\]
is given by $P(\upsilon)\otimes \id_{L^2(G, \mu, H)}$ by definition of $P(\upsilon)$. The analogous equation holds for $P(\nu)$ using the analogous operator $Q(\nu)$. We write $\tilde{F}:L^2(T, \upsilon)\otimes H\otimes L^2(G, \mu)\to L^2(S, \nu)\otimes H\otimes L^2(G, \mu)$ for the operator $\tilde{F}:=F\otimes \id_H\otimes \id_{L^2(G, \mu)}.$ By naturality of the associator, it holds that $\tilde{F}\circ Q(\upsilon) = Q(\nu)\circ \tilde{F}$, implying that
\[
\big((F\otimes \id_H)\circ P(\upsilon) \big)\otimes \id_{L^2(G, \mu)} = \big(P(\nu)\circ (F\otimes \id_H)\big)\otimes \id_{L^2(G, \mu)}.
\]
Since tensoring with the identity operator on $L^2(G,\mu)$ is faithful, the family $P(-)$ is natural.

\begin{theorem}\label{thm: HisC}
Let $\Ccal$ be a Tambara-Yamagami $\mathrm{W}^*$-tensor category for a locally compact abelian group $G$. Let $\tau$ be the simple non-invertible object of $\Ccal$. Then,
\[
\tau\otimes \tau\cong \int^\oplus_{x\in G}\delta_x\mathrm{d}\mu(x).
\]
\end{theorem}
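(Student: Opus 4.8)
The plan is to invoke Proposition \ref{Prop: Tau2isL2G}, which already provides an isomorphism $\tau\otimes\tau\cong L^2(G,H)$ for some Hilbert space $H$, and to reduce the theorem to proving that $H$ is one--dimensional. Throughout I would work in the normalized coordinates produced by Propositions \ref{prop: FirstStep} and \ref{prop: SecondStep}, so that $a$ and $a_3$ are trivial, $b_1(\mu)=\id$ and $a_2(\mu,\mu)=\chi$ is a continuous $U(1)$--valued bicharacter, and I would use the factorization $b_2(\lambda)(t,x)=a_2(\mu,\lambda)(x,t)\,P(\lambda)(t)$ obtained from Lemma \ref{Lemm: Function P}, where $P(\lambda)\in L^\infty(T,\lambda,U(H))$. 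The key structural observation is that all of the $a_i$ are scalar ($U(1)$--valued), so the entire multiplicity space $H$ is controlled only by the $U(H)$--valued operators $b_1,b_2,b_3$ and by the associator $\gamma$ coming from $\alpha_{\tau,\tau,\tau}$; moreover $\gamma$ is the only piece of data that can a priori entangle the two tensor factors of $H\otimes H$ appearing in $(\tau\otimes\tau)\otimes(\tau\otimes\tau)$.

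First I would pin down the interaction of $\gamma$ with the multiplicity space $H$. Using the pentagon relations \eqref{eq: Pentagon3.1}, \eqref{eq: Pentagon3.2} and \eqref{eq: Pentagon3.3}, together with the already computed forms of the $\beta_i$ and the fact that the $\alpha_i$ act as scalar multiplications, I would normalize $\gamma$ so that it acts as a Fourier--type unitary on the function variable $L^2(\mu)$ tensored with the identity on $H$; equivalently, $\gamma$ intertwines the relevant $U(H)$--valued multiplications and does \emph{not} mix the two copies of $H$. This normalization is the technical heart of the argument and relies on Lemma \ref{lemm: CommutantL2} to locate the operators in the appropriate commutants, together with the principle that measurable homomorphisms into $U(1)$ and $U(H)$ are continuous.

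Next I would feed this into the single remaining pentagon equation \eqref{eq: Pentagon4}, which is the only coherence relation involving the swap $\sigma_H$ of the two $H$--factors. With $b_1(\mu)=\id$ and with $\gamma$ acting as the identity on $H$, the left--hand side of \eqref{eq: Pentagon4} acts on the fibre $H\otimes H$ as a \emph{product} operator, namely the identity on the first factor and the multiplication $b_2(\mu)$ on the second, conjugated only by operators reshuffling the $L^2(G)$--variables; the right--hand side is $\sigma_H$ composed with $b_3(\mu)\otimes\id$ and a shift, and therefore genuinely permutes the two factors. Stripping the $L^2$--function data by testing against suitably chosen functions in $L^2(\mu\times\mu,H\otimes H)$ and disintegrating, the equation collapses to a fibrewise identity of the form $\sigma_H=b_3(\mu)(x,x^{-1}y)^{-1}\otimes b_2(\mu)(x,y)$ for almost every point, exhibiting the flip on $H\otimes H$ as a product operator. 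An elementary computation (taking $h_1=h_2$) shows that the flip is a product operator only when $\dim H=1$, so I would conclude $H\cong\CC$ and hence $\tau\otimes\tau\cong L^2(G,\CC)=L^2(G)$.

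The step I expect to be the main obstacle is precisely the comparison of the two sides of \eqref{eq: Pentagon4} at the level of the fibre $H\otimes H$. Because the Haar measure $\mu$ is non--atomic there is no pointwise evaluation available and the $b_i$ and $\gamma$ are only defined as $L^\infty$ operators, so extracting a clean fibrewise identity of unitaries on $H\otimes H$ requires a careful choice of test functions and a disintegration argument rather than the matrix--entry manipulation that suffices in the finite Tambara--Yamagami setting. Establishing beforehand that $\gamma$ can be taken to act trivially on $H$ is the delicate precursor that makes this swap argument go through.
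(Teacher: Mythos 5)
Your setup matches the paper's: reduce via Proposition \ref{Prop: Tau2isL2G} to showing $\dim H = 1$, work in the coordinates of Propositions \ref{prop: FirstStep} and \ref{prop: SecondStep}, bring in the factorization $b_2(\lambda)(t,x)=a_2(\mu,\lambda)(x,t)P(\lambda)(t)$ from Lemma \ref{Lemm: Function P}, and extract the contradiction from the one pentagon equation \eqref{eq: Pentagon4} containing the swap $\sigma_H$. But the two steps you single out as the heart of your argument are exactly where it breaks. First, the normalization of $\gamma$ to a ``Fourier--type unitary $\otimes\,\id_H$'' is asserted, not proved. Equations \eqref{eq: Pentagon3.1}--\eqref{eq: Pentagon3.3} only give conjugation relations of the form $\gamma\, M_{\chi(\cdot,x)\otimes P_x}\,\gamma^{-1} = (\text{shift})\cdot M_{a_1}\otimes\id_H$; to conclude $\gamma=\gamma_0\otimes\id_H$ from these you would need an irreducibility (Stone--von Neumann/Schur) argument for the Heisenberg-type representation on $L^2(\mu)$, which in the paper is only available \emph{after} $H\cong\CC$ and the nondegeneracy of $\chi$ have been established (Theorem \ref{thm: Schur} and Lemma \ref{Lemm: Non-degenerate} are applied in Theorem \ref{thm: ThirdStep}, downstream of this result) --- so your precursor step is at best circular. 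Second, even granting $\gamma=\gamma_0\otimes\id_H$, Equation \eqref{eq: Pentagon4} does not ``collapse to a fibrewise identity $\sigma_H=b_3(\mu)(x,x^{-1}y)^{-1}\otimes b_2(\mu)(x,y)$'': the left-hand side is $(\gamma_0)_x\circ M_{\chi(y,x)(\id\otimes P(x))}\circ(\gamma_0)_x$, with two interleaved applications of the non-local operator $\gamma_0$ sandwiching an $x$-dependent $U(H)$-valued multiplication. This is not a multiplication operator, and no choice of test functions or disintegration strips the $\gamma_0$'s out. (As a sanity check, your claimed fibrewise identity would already impose $b_2=b_3$ pointwise when $\dim H=1$, which is not a consequence of the pentagon.)

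The paper's proof needs neither step, and the idea you are missing is that one never has to control how $\gamma$ acts on $H$ --- only that $\gamma\otimes\id$ leaves the \emph{second} $H$-factor untouched, which is automatic from the shape of \eqref{eq: Pentagon4}. Concretely: after substituting $b_2(\mu)(x,y)=\chi(y,x)P(x)$ and $b_3(\mu)(x,y)=P(x)^{-1}\rho(x,y)$ (the latter from \eqref{eq: Pentagon3.4} and \eqref{eq: Pentagon3.2}, with $\rho$ scalar-valued), the only non-scalar fibre operators left in \eqref{Eq: Pentagon4Bis} are $\id\otimes P(x)$ on the left and $P(x)^{-1}\otimes\id$ on the right. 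If $\dim H>1$, the Spectral Theorem gives a proper $P(x)$-invariant subspace $V_x\subset H$ with $V_x^\perp$ invariant under $P(x)^{-1}$. Choosing $f\neq 0$ valued in $V_{x^{-1}y}^\perp\otimes V_x$, the left-hand side lands in $H\otimes V_x$ (the second factor is only ever hit by $P(x)$), while the right-hand side lands in $V_{x^{-1}y}\otimes V_x^\perp$ because of the swap $\sigma_H$. Since $V_x\cap V_x^\perp=\{0\}$, both sides vanish, contradicting the invertibility of every operator involved. Your instinct that ``the flip cannot be a product operator'' is the right moral, but the working argument localizes the obstruction in invariant subspaces of $P(x)$ rather than in a fibrewise factorization of $\sigma_H$ that the equation does not actually provide.
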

\begin{proof}
By Proposition \ref{Prop: Tau2isL2G}, there is a Hilbert space $H$ such that
\[
\tau\otimes \tau\cong H\cdot\int^\oplus_{x\in G}\delta_x\mathrm{d}\mu(x).
\]
After having performed the change of coordinates in Proposition \ref{prop: SecondStep}, defining the family of functions $P(-)$ as above, by Equations \eqref{eq: Pentagon2.14} and \eqref{eq: Pentagon1.2}, we obtain
\[
a_1(\upsilon,\nu)(t,s) = P(\upsilon\times\nu)(t,s)^{-1}P(\upsilon)(t)P(\nu)(s)\in L^\infty(\upsilon\times\nu, U(1)).
\]
Using Equations \eqref{eq: Pentagon3.4} and \eqref{eq: Pentagon3.2} for $\int^\oplus_{t\in T}\delta_{p(t)}\mathrm{d}\upsilon(t) = \int^\oplus_{x\in G}\delta_x\mathrm{d}\mu(x)$, and letting $P$ be a representative of $P(\mu)$,
\begin{equation}\label{eq:bintermsofP}
b_3(\mu)(x,y) = P(x)^{-1}\cdot\frac{P(\mu\times\mu)(x,y)^{-1}P(x)P(y)}{P(\mu\times\mu)(y, x)^{-1}P(y)P(x)}
\end{equation}
for almost every $(x,y)\in G^2$. Let $\rho(x,y):=\frac{P(\mu\times\mu)(x,y)^{-1}P(x)P(y)}{P(\mu\times\mu)(y, x)^{-1}P(y)P(x)}\in L^\infty(\mu\times\mu, U(1)).$ Using this, the pentagon Equation \eqref{eq: Pentagon4} reads
\begin{align}\label{Eq: Pentagon4Bis}
[\gamma\otimes \id]\Big(\chi(y, x)\cdot [\id\otimes P(x)]\circ [\gamma\otimes &\id](f)(x,y)\Big) \\ & \nonumber= \sigma_H\circ [P(x)^{-1}\otimes \id]\Bigg(\rho(x,-x+y)f(-x+y, y)\Bigg)
\end{align}
for any $f\in L^2(\mu\times\mu, H\otimes H)$. Let us write $L$ and $R$ for the operators in $U\big(L^2(G\times G, H\otimes H)\big)$ defining the left- and right-hand sides of Equation \eqref{Eq: Pentagon4Bis} respectively. Fix an operator $T\in B(H)$ and write $T_1,T_2\in B(L^2(G\times G, H\otimes H))$ for the operators $T_1f(x,y) = (T\otimes \id)f(x,y)$ and $T_2f(x,y) = (\id\otimes  T)f(x,y)$ for all $f\in L^2(G\times G, H\otimes H).$ It is clear that
\begin{align*}
R\circ T_2 = \sigma_H\circ [P(x)^{-1}&\otimes \id]\big(\rho(x,-x+y)\cdot -\big) \circ T_2 \\&= \sigma_H\circ T_2\circ [P(x)^{-1}\otimes \id]\big(\rho(x,-x+y)\cdot -\big)  = T_1\circ R.
\end{align*}
Similarly, since $T_2$ commutes with $\gamma\otimes \id$ and with multiplication by $\chi(y,x)$,
\[
L\circ T_2 = [\gamma\otimes\id]\circ (\id\otimes P(x)TP(x)^* )\circ [\gamma\otimes \id]^*\circ L.
\]
Since $L = R$ by hypothesis, we obtain that
\[
T_1 = [\gamma\otimes \id]\circ (\id\otimes P(x) TP(x)^*)\circ [\gamma^*\otimes \id],
\]
and conjugating by $\gamma^*\otimes \id$,
\begin{equation}\label{eq: middlestep}
\gamma^*T\gamma\otimes \id = \id\otimes P(x) TP(x)^*.
\end{equation}
The left-hand side of Equation \eqref{eq: middlestep} acts trivially on the second $L^2(G, H)$-factor, and hence it commutes with $S_2 = (\id\otimes S)\in B\big(L^2(G\times G, H\otimes H)\big)$ for every $S\in B(H)$. Therefore, the right-hand side of Equation \eqref{eq: middlestep} also commutes with any such $S_2$, implying that $P(x)TP(x)^*$ commutes with any operator $S\in B(H)$ for almost every $x\in G$. Since $H$ is separable, we obtain that $P(x)TP(x)^*$ is a scalar multiple of the identity for almost every $x\in G$. This implies that $T$ is a scalar multiple of the identity. Since $T\in B(H)$ was arbitrary, we obtain that $B(H) = \mathbb{C}$ and $H$ has dimension~1.
\end{proof}

We can now finish the description of the tensor product structure on $\Rep(C_0(G))\oplus \Hilb\cdot \tau$. Recall that we call a bicharacter $\chi: G\times G\to U(1)$ nondegenerate if the map $j_\chi :G\to \hat{G}$, $j_\chi(y)(x) = \chi(x,y)$ is an isomorphism, and that we write $\mathcal{F}: L^2(\hat{G})\to L^2(G)$ for the unitary Fourier~transform. If $\chi$ is nondegenerate, we write $J_\chi: L^2(G, \mu)\to L^2(\hat{G}, \hat{\mu})$ for the unitary $J_\chi(f)(\eta) = \sqrt{c_\chi}\cdot f\big(j_\chi^{-1}(\eta)\big)$, for $c_\chi\in \R_{>0}$ the unique positive number such that $(j_\chi)_*\mu = c_\chi\hat{\mu}$.

\begin{theorem}\label{thm: ThirdStep} Let $\Ccal$ be a Tambara-Yamagami $\mathrm{W}^*$-tensor category for $G$. Let $\tau$ be the non-invertible simple object. Then $\tau\otimes \tau\cong \int^\oplus_{x\in G}\delta_x\mathrm{d}\mu(x)$ and there is a choice of coordinates and a sign $\xi\in \{\pm 1\}$ for which
		\begin{enumerate}
		\item$a, a_3, a_1$ are trivialized, 
		\item $\beta_1 = \sigma$ and $\beta_3 = \sigma^{-1}$,
		\item $\chi:= a_2(\mu, \mu)$ is a nondegenerate  continuous symmetric bicharacter, and
		\begin{align*}
		a_2(\upsilon,\nu)(t,s) &= \chi\big(p(t), n(s)\big)\\
		b_2(\upsilon)(t,x) & = \chi\big(p(t), x\big),
		\end{align*}
        almost everywhere for all objects $\int^\oplus_{t\in T}\delta_{p(t)}\mathrm{d}\upsilon(t),\,\int^\oplus_{s\in S}\delta_{n(s)}\mathrm{d}\nu(s)\in \mathcal{C}_0$,
		\item the isomorphism  $\gamma$ is given by the composition
		\[
		L^2(G)\xrightarrow{J_\chi}L^2(\hat{G})\xrightarrow{\xi\cdot \mathcal{F}} L^2(G).
		\]
	\end{enumerate}
\end{theorem}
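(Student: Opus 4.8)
The plan is to start from the normal form already reached in Proposition~\ref{prop: SecondStep} and Theorem~\ref{thm: HisC}: after the second change of coordinates we may assume $H\cong\CC$, that $a\equiv 1$ and $a_3\equiv 1$, that $b_1\equiv\id$ (so $\beta_1=\sigma$), that $\chi:=a_2(\mu,\mu)$ is a continuous bicharacter, and that there exist functions $P(\lambda)$ with $a_1(\lambda,\nu)=P(\lambda\times\nu)^{-1}P(\lambda)P(\nu)$ and $b_2(\lambda)(t,x)=a_2(\mu,\lambda)(x,t)\,P(\lambda)(t)$. First I would carry out the final, third change of coordinates $\psi(\lambda):=P(\lambda)$ with $\theta=\varphi=\omega=\id$. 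Equations~\eqref{eq: ChangeCoordinates0}, \eqref{eq: ChangeCoordinates1.2}, \eqref{eq: ChangeCoordinates1.3} and \eqref{eq: ChangeCoordinates2.1} show that $a$, $a_3$, $a_2$ and $\beta_1$ are unchanged, while \eqref{eq: ChangeCoordinates1.1} combined with the formula for $a_1$ forces $a_1\equiv 1$, and \eqref{eq: ChangeCoordinates2.2} turns $b_2$ into $b_2(\lambda)(t,x)=a_2(\mu,\lambda)(x,t)$. This establishes~(i).

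Next I would identify $a_2$ and $b_2$ with $\chi$ and prove symmetry. By Lemmas~\ref{lemm: CharacterizationCommuting} and~\ref{lemm: CommutantL2}, $a_2(\lambda,\nu)$ lies in the double commutant of $C_0(G^2)$ and is therefore multiplication by a function of $(p(t),q(s))$; feeding $a_2(\mu,\mu)=\chi$ into Lemma~\ref{lemma: beta1a2} and the bicharacter relations \eqref{eq: Pentagon1.2}--\eqref{eq: Pentagon1.3} bootstraps this to $a_2(\lambda,\nu)(t,s)=\chi(p(t),q(s))$, and in particular $a_2(\mu,\lambda)(x,t)=\chi(x,p(t))$. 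Symmetry I would extract from the shift relation \eqref{eq: Pentagon2.13}: with the value $b_2(\mu)(t,x)=\chi(x,t)P(t)$ from Theorem~\ref{thm: HisC}, one side reads $b_2(\mu)(t,xs)=\chi(x,t)\chi(s,t)P(t)$ and the other $b_2(\mu)(t,x)\chi(t,s)=\chi(x,t)P(t)\chi(t,s)$, forcing $\chi(s,t)=\chi(t,s)$. Symmetry then upgrades the formula above to $b_2(\lambda)(t,x)=\chi(p(t),x)$, which is~(iii).

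For (ii) and (iv) I would turn to the data of $\gamma=\alpha_{\tau,\tau,\tau}$. With $a\equiv 1$ and $b_1\equiv\id$, relation \eqref{eq: Pentagon2.23} forces $b_3(\nu)(s,\cdot)$ to be essentially constant in its second argument (by Haar translation invariance), say $b_3(\nu)(s,x)=c(\nu)(s)$, and \eqref{eq: Pentagon2.12} makes $c$ multiplicative under $\times$. I would then specialize the four-$\tau$ pentagon \eqref{eq: Pentagon4}, in the form \eqref{Eq: Pentagon4Bis}, to $H\cong\CC$ with $b_1\equiv\id$, $b_2(\mu)=\chi$ and $b_3(\mu)=c$; it collapses to the functional equation $\gamma\big(\chi(\cdot,y)\,\gamma(g)\big)(x)=c(x)\,g(x^{-1}y)$ for every $g\in L^2(G)$ and almost every $y$. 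This is exactly the statement that $\gamma$ conjugates modulation by $\chi$ into (twisted) translation, which characterizes the Fourier transform attached to the self-pairing $x\mapsto\chi(x,-)$. Since a $\mathrm{W}^*$-tensor category requires $\gamma$ to be unitary, Plancherel together with Fourier inversion \cite[Thm.~1.5.1]{rudin} then force $\chi$ to be nondegenerate (no unitary solution exists otherwise), force $c\equiv 1$ so that $b_3\equiv\id$ and $\beta_3=\sigma^{-1}$ ---giving~(ii)--- and identify $\gamma$ as $\xi$ times the $\chi$-Fourier transform $L^2(G)\xrightarrow{\nicefrac{1}{\chi}}L^2(\hat{G})\xrightarrow{\Fcal}L^2(G)$ for a unimodular scalar $\xi$; applying the equation a second time pins $\xi^2=1$, so $\xi\in\{\pm1\}$ and (iv) follows.

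The hard part will be this last step. Because we work throughout with $L^\infty$-functions against the (generally non-atomic) Haar measure, we cannot evaluate at points or on Dirac objects, so deducing from an almost-everywhere functional equation that $\gamma$ is genuinely a Fourier transform ---and in particular deriving nondegeneracy of $\chi$ from nothing more than unitarity of $\gamma$--- is delicate. As in Proposition~\ref{prop: SecondStep}, promoting the almost-everywhere bicharacter to an honest continuous symmetric one will rely on the measurable-implies-continuous results \cite{ramsay, sasvari}, and the functional-equation lemmas~\ref{lemm: SolutionToEquationL2} and~\ref{Lemm: Function P} deferred to the appendix are what make these point-free arguments go through.
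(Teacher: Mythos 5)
Your steps for (i) and (iii) track the paper's argument closely: the third change of coordinates $\psi(\lambda):=P(\lambda)$, the trivialization of $a_1$ via the formula $a_1(\lambda,\nu)=P(\lambda\times\nu)^{-1}P(\lambda)P(\nu)$, the symmetry of $\chi$ from \eqref{eq: Pentagon2.24}--\eqref{eq: Pentagon2.13}, and the bootstrap $a_2(\lambda,\nu)(t,s)=\chi(p(t),q(s))$ via Lemma \ref{lemma: beta1a2} are all essentially what the paper does. For (ii), your route through \eqref{eq: Pentagon2.23} (translation invariance of $b_3(\nu)(s,\cdot)$ in the second variable) is different from, but no worse than, the paper's direct computation, which gets $b_3'(\mu)\equiv 1$ immediately from \eqref{eq: ChangeCoordinates2.3} combined with \eqref{eq:bintermsofP} and the naturality identity $P(\mu\times\mu)(x,y)=P(\mu)(xy)$; the paper's version is cleaner because it never introduces the residual character $c$ that you then have to kill.

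The genuine gap is in (iv). You assert that the single quadratic functional equation $\gamma\bigl(\chi(\cdot,y)\,\gamma(g)\bigr)(x)=c(x)\,g(x^{-1}y)$ ``characterizes the Fourier transform attached to the self-pairing,'' with Plancherel and Fourier inversion forcing nondegeneracy of $\chi$, $c\equiv 1$, and $\gamma=\xi\Phi$. That characterization is exactly the statement that needs proof, and Plancherel/inversion do not supply it --- they are properties of the Fourier transform, not a uniqueness theorem for operators satisfying such relations. The paper's mechanism is different and essential: it first uses the pentagon equations \eqref{eq: Pentagon3.2} and \eqref{eq: Pentagon3.3} (which your outline never invokes) to extract the two \emph{linear} intertwining relations \eqref{Eq: 1415rephrased}, saying that $\gamma$ conjugates modulation by $\chi(x,-)$ to translation and vice versa; it then uses \eqref{eq: Pentagon4} only to get injectivity of $x\mapsto\chi(x,-)$ via Lemma \ref{Lemm: Non-degenerate} and, at the very end, to pin $\xi^2=1$. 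The identification $\gamma=\xi\Phi$ is Theorem \ref{thm: Schur}: $\Phi\circ\gamma^{-1}$ intertwines the irreducible representation of the Heisenberg group $HG$ on $L^2(G)$ with itself, so Schur's Lemma makes it a scalar; surjectivity of $x\mapsto\chi(x,-)$ (hence nondegeneracy) then falls out because unitarity of $\gamma$ forces $L^2(\chi(G))\hookrightarrow L^2(\hat G)$ to be onto, so $\chi(G)$ is dense, hence open, hence all of $\hat G$. Without this Stone--von Neumann/Schur input (or an equivalent uniqueness argument), your ``hard part'' remains unproved; you correctly flag it as delicate but do not close it.
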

\begin{proof}
	We pick the change of coordinates described in Proposition \ref{prop: SecondStep}. By Theorem \ref{thm: HisC}, we can pick a unitary isomorphism $H\cong\CC$. Then, $b_2$ and $b_3$ are $L^\infty$-functions with values in $U(1)$. Let $\int^\oplus_{t\in T}\delta_{p(t)}\mathrm{d}\upsilon(t),\,\int^\oplus_{s\in S}\delta_{n(s)}\mathrm{d}\nu(s)\in \mathcal{C}_0$. Then, using Equations \eqref{eq: Pentagon2.24} and \eqref{eq: Pentagon2.13}, we obtain
	 \begin{equation}\label{eq: almostsymmetric}
	 	a_2(\upsilon, \nu)(t,s) = {b_2(\nu)(s, x)}^{-1}\cdot b_2(\nu)(s,t+x) = a_2(\nu,\upsilon)(s,t),
	 \end{equation}
	 hence the continuous bicharacter $\chi$ is symmetric. Since $P(\upsilon)\in L^\infty(\upsilon, U(1))$ and the family $P(-)$ is natural, we can set $\psi(\upsilon) := P(\upsilon)$. By Equation \eqref{eq: ChangeCoordinates2.2}, we obtain $$b_2'(\upsilon)(t,x) = a_2(\mu,\upsilon)(x,t) = a_2(\upsilon, \mu)(t,x),$$ 
     and by Equations \eqref{eq: ChangeCoordinates2.3} and \eqref{eq:bintermsofP},
     $$b_3'(\mu)(x,y)  = 1,$$
     where we have also used that $P(\mu\times\mu)(x,y) = P(\mu)(x+y)$ by naturality. Similarly to how we have argued for $b_1$ in the proof of Proposition \ref{prop: SecondStep}, this implies that $b'_3(\upsilon) = 1$ for any object $\int^\oplus_{t\in T}\delta_{p(t)}\mathrm{d}\upsilon(t)$ of $\Ccal_0$. Therefore, $\beta'_3(\upsilon) = \sigma^{-1}$ is the inverse of the shift operator. The triviality of $a_1$ follows straightforwardly from Equations \eqref{eq: Pentagon1.2} and \eqref{eq: Pentagon2.14}.
	
	Let us now characterize $a_2$. By Equations \eqref{eq: Pentagon1.3} and Lemma \ref{lemma: beta1a2},
	\[
	a_2(\upsilon, \mu)(t,x) = \frac{\chi\big(p(t)+y, x\big)}{\chi(y,x)} = \chi\big(p(t), x\big)
	\]
	and by Equation \eqref{eq: Pentagon1.2} and Lemma \ref{lemma: beta1a2}, 
	\[
	a_2(\mu, \upsilon)(x,t) = \frac{\chi\big(x, p(t)+y\big)}{\chi(x,y)} = \chi\big(x, p(t)\big)	
	\]
	for almost every $y\in G$. Similarly, using naturality of the associator, we obtain
	\[
	a_2(\upsilon, \nu\times\mu)(t,s,x) = a_2(\upsilon, \mu)\big(t,x+n(s)\big) = \frac{\chi\big(p(t)+y, x+n(s))}{\chi\big(y, x+n(s)\big)} = \chi\big(p(t),x+n(s)\big)
	\]
	for almost every $y\in G$. This last equality, together with Equation \eqref{eq: Pentagon1.2}, yields
	\[
	a_2(\upsilon, \nu)(t,s) = \frac{\chi\big(p(t), x+n(s)
		\big)}{\chi\big(p(t), x\big)} = \chi\big(p(t), n(s)\big)
	\]
	for almost every $x\in G$, as needed. Next, note that  Equations \eqref{eq: Pentagon3.2} and \eqref{eq: Pentagon3.3} now read, for any $g(y)\in L^2(G)$ and almost every $x\in G$,
\begin{align}\label{Eq: 1415rephrased}
	[\id\otimes \gamma]\big(\chi(x,y)g(y)\big) = \gamma(g)(-x+y)\hspace{1cm}  [\id\otimes \gamma]\big(g(x+y)\big) &= \chi(x,y)\gamma(g).
\end{align}
    We will apply Theorem \ref{thm: Schur}. In order to do so, we need to argue that $\chi$ induces an injective map $G\hookrightarrow \hat{G}$. Note that, applying Equation \eqref{eq: Pentagon4} to a product function $f(x)g(y)\in L^2(\mu\times\mu)$ of functions in $L^2(\mu)$, we obtain
    \begin{equation}\label{eq: Product}
    \gamma\big(\chi(x,y)\cdot\gamma(f)\big)(x)g(y) = f(-x+y)g(y).
    \end{equation}
     Given $y\in G$, we write $R_y\in B(L^2(G))$ for the operator $R_yf(x) = f(-x+y)$, and $M_y\in B(L^2(G))$ for the operator $M_yf(x) = \chi(x,y)f(x)$. Since $f$ and $g$ in Equation \eqref{eq: Product} are arbitrary, it follows that 
     \begin{equation}\label{eq: Pentagon4Simplified}
     \gamma\circ M_y\circ\gamma = R_y
     \end{equation}
     for almost every $y\in G$. Let $\{f_n\}_{n\in \mathbb{N}}$ be a countable dense subset of $L^2(G)$ and, for every $n\in\mathbb{N}$ write $Y_n\subset G$ for the subset of full measure $Y_n = \{y\in G\,|\, \gamma\circ M_y\circ\gamma(f_n) = R_y(f_n)\}$. Then, $Y:=\bigcap\limits_{n\in\mathbb{N}} Y_n$ is of full measure. For every $y\in Y$, Equation \eqref{eq: Product} holds for any function $f$ on the dense subset $\{f_n\}_{n\in \mathbb{N}}\subset L^2(G)$. By continuity, it holds for all $f\in L^2(G)$. The maps $y\in G\mapsto M_y\in B(L^2(G))$ and $y\in G\mapsto R_y\in B(L^2(G))$ are strongly continuous by continuity of $\chi$, dominated convergence, and continuity of the translation operator of $G$ on $L^2(G)$. Hence, it holds that for all $y\in G$,
     \[
     \gamma\circ M_y\circ\gamma = R_y.
     \]
     Therefore, we can apply Lemma \ref{Lemm: nondegenerate} to deduce that the bicharacter $\chi$ indeed induces an injective homomorphism $j_\chi:G\hookrightarrow\hat{G}.$ Thus, the hypotheses of Theorem \ref{thm: Schur} are satisfied and it follows that $\gamma$ is given by
\[
L^2(G)\xrightarrow{J_\chi}L^2(\hat{G})\xrightarrow{\xi\cdot \mathcal{F}} L^2(G),
\]
for some $\xi\in U(1)$. In addition, $j_\chi:G\to \hat{G}$ is an isomorphism. Finally, Equation \eqref{eq: Pentagon4Simplified} now reads, using the frequency shifting property of the Fourier transform, 
\[
\xi^2(\mathcal{F}\circ J_\chi)^2(f)(x-y) = f(-x+y)
\]
for any $f\in L^2(\mu)$. Since the square of $\mathcal{F}\circ J_\chi$ is the parity operator by the Inverse Fourier Theorem \cite[Thm. 1.5.1]{rudin}, we obtain $\xi = \pm1$.  
\end{proof}

We can now prove the main classification result for Tambara-Yamagami $\mathrm{W}^*$-tensor~categories. The group $\text{Aut}(G)$ of continuous automorphisms of $G$ acts on the set of continuous symmetric nondegenerate bicharacters on $G$ by $\phi\cdot\chi = \chi\circ(\phi^{-1}\times\phi^{-1})$ for all $\phi\in \text{Aut}(G)$ and $\chi:G\times G\to U(1)$ a continuous symmetric nondegenerate bicharacter. Therefore, $\text{Aut}(G)$ acts on the set of pairs $(\chi, \xi)$ of a continuous symmetric nondegenerate bicharacter on $G$ and a sign $\xi\in\{\pm1\}$.

\begin{theorem}\label{thm: modAut(G)}
    Let $G$ be a locally compact abelian group. There is a bijection 
\[
\nicefrac{\left\{\begin{array}{l} (\chi, \xi)\ |\ 
    \text{$\chi: G\times G\to U(1)$ a}\\
    \text{continuous symmetric} \\\text{nondegenerate bicharacter}\\\text{ and $\xi\in\{\pm1\}$}
  \end{array}\right\}}{\text{Aut}(G)}\xrightarrow{\TYcal(G,-,-)}\nicefrac{\left\{\begin{array}{l}
    \text{Tambara-Yamagami $\mathrm{W}^*$-}\\
    \text{tensor categories for $G$}
  \end{array}\right\}}{\mathrm{W}^*-\otimes\text{ equiv.}} 
\]
\end{theorem}
\begin{proof}
We first show that the map is well-defined. Let $\chi:G\times G\to U(1)$ be a continuous symmetric nondegenerate bicharacter and $\xi\in \{\pm1\}$ a sign. Fix $\phi: G\to G$ a continuous group automorphism and define $\chi':=\chi\circ (\phi^{-1}\times\phi^{-1})$. We need to show that $\TYcal(G, \chi, \xi)\cong \TYcal(G, \chi', \xi)$ as $\mathrm{W}^*$-tensor categories. Let $F: \TYcal(G, \chi, \xi)\to \TYcal(G, \chi',\xi)$ be the $\mathrm{W}^*$-functor with $F(\tau) = \tau$ and $F\big(\int^{\oplus}_{t\in T }\delta_{p(t)}\mathrm{d}\upsilon(t)\big) = \int^{\oplus}_{t\in T }\delta_{\phi\circ p(t)}\mathrm{d}\upsilon(t)$ for all $\int^{\oplus}_{t\in T }\delta_{p(t)}\mathrm{d}\upsilon(t)\in \Rep(C_0(G))$, extended trivially to morphisms. We write $a_{\phi^{-1}}\in \R_{>0}$ for the unique positive real number such that $(\phi^{-1})_*\mu = a_{\phi^{-1}}\cdot \mu$, and we denote by $U_\phi: \int^{\oplus}_{x\in G}\delta_x\mathrm{d}\mu(x)\to \int^{\oplus}_{x\in G}\delta_{\phi\circ x}\mathrm{d}\mu(x)$ the unitary equivalence of $C_0(G)$-representations given by $f\in L^2(G)\mapsto \sqrt{a_{\phi^{-1}}}\cdot f\circ \phi\in L^2(G)$. We provide $F$ with a tensorator $s$ whose only non-trivial tensor data is
\[\hspace{-.3cm}
s_{\tau, \tau}: F(\tau)\otimes F(\tau)
= \tau\otimes \tau
= \int^{\oplus}_{x\in G}\delta_x\,\mathrm{d}\mu(x)
\xrightarrow{U_\phi}
\int^{\oplus}_{x\in G}\delta_{\phi(x)}\,\mathrm{d}\mu(x)
= F\!\left(\int^{\oplus}_{x\in G}\delta_x\,\mathrm{d}\mu(x)\right)
= F(\tau\otimes \tau).
\]
Compatibility with the associators is straightforward to check. Hence, the map in the statement of the theorem is well defined.

We show that the map is surjective. Let $\Ccal = \Rep(C_0(G))\oplus \Hilb\cdot \tau$ be a Tambara-Yamagami $\mathrm{W}^*$-tensor category for $G$. After having performed the change of coordinates described in Theorem \ref{thm: ThirdStep}, we obtain a collection of natural isomorphisms $[-,-], [\tau, -], [-,\tau]$ and an isomorphism $[\tau]$, as well as a continuous symmetric nondegenerate bicharacter $\chi$ and a sign $\xi\in\{\pm1\}$. Then, the identity functor $\Ccal = \Rep(C_0(G))\oplus \Hilb\cdot \tau$, together with $[-,-]^{-1},[\tau,-]^{-1},[-,\tau]^{-1}, [\tau]^{-1}$ is an equivalence of $\mathrm{W}^*$-tensor categories between $\Ccal$ and $\TYcal(G, \chi, \xi)$. Note that the definition of Tambara-Yamagami $\mathrm{W}^*$-tensor category already implies that $\delta_e$ is the unit, for $e\in G$ the identity element. There is no need to argue compatibility with the unitors, see \cite[Prop. 2.4.3]{EGNO}.

It remains only to show that the map is injective. Let $\chi, \chi': G\times G\to U(1)$ be two bicharacters and $\xi, \xi'\in \{\pm1\}$ be two signs. Let $(F, s): \TYcal(G, \chi, \xi)\to \TYcal(G, \chi', \xi')$ be a $\mathrm{W}^*$-tensor equivalence. Since $F(\tau)\in \TYcal(G, \chi', \xi')$ is a non-invertible simple object, we can pick a unitary isomorphism $\pi_\tau: F(\tau)\xrightarrow{\cong}\tau$. Denoting by $\mu\in \Rep(C_0(G))$ the regular representation $L^2(G) = \int^\oplus_{x\in G}\delta_x\mathrm{d}\mu(x)$, we have a sequence of unitary equivalences in $\TYcal(G, \chi', \xi')$
    \[
   \Phi:  F(\mu) = F(\tau\otimes\tau)\xrightarrow[\cong]{s_{\tau, \tau}^{-1}} F(\tau)\otimes F(\tau) \xrightarrow[\cong]{\pi_\tau\otimes\pi_\tau}\tau\otimes\tau = \mu.
    \]
   We identify $L^\infty(G) = \End(\mu)$ by sending a function $a\in L^\infty(G)$ to the multiplication-by-$a$ bounded map $\mu\to \mu$. Let us denote by $C_\Phi: \End(F(\mu))\cong \End(\mu) = L^\infty(G)$ the von Neumann algebra isomorphism given by conjugation by the unitary $\Phi$. Then, we have a normal $*$-automorphism
   \begin{equation}\label{eq: VNAutomorphism}
   L^\infty(G) = \End(\mu)\xrightarrow{F}\End(F\mu)\xrightarrow{C_\Phi}\End(\mu) = L^\infty(G).
   \end{equation}
   Therefore, there exists a non-singular measurable bijection $\phi: G\to G$ (with non-singular inverse) such that the automorphism \eqref{eq: VNAutomorphism} is given by precomposition with $\phi^{-1}$. Throughout, we fix representatives of $\phi$ and $\phi^{-1}$ and denote them by the same symbols. Let $r_\phi = \frac{\mathrm{d} (\phi^{-1})_*\mu}{\mathrm{d}\mu}$. Since $\phi$ and $\phi^{-1}$ are non-singular, we have $0<r_\phi(x)<\infty$ for almost every $x\in G$. Then, there is a unitary equivalence of $C_0(G)$-representations
\[
\begin{array}{cccc}
   U_\phi:  & \int^{\oplus}_{x\in G} \delta_x\mathrm{d}\mu(x)&\to &\int^\oplus_{x\in G}\delta_{\phi(x)}\mathrm{d}\mu(x)\\
     & f &\mapsto & \sqrt{r_\phi}\cdot (f\circ \phi). 
\end{array}
\]
In addition, for any $\varphi\in L^\infty(G) = \End(\mu)$, it holds that the diagram
\[\begin{tikzcd}
	{F(\mu)} && \mu & {\int^{\oplus}_{x\in G}\delta_{\phi(x)}\mathrm{d}\mu(x)} \\
	{F(\mu)} && \mu & {\int^{\oplus}_{x\in G}\delta_{\phi(x)}\mathrm{d}\mu(x)}
	\arrow["\Phi", from=1-1, to=1-3]
	\arrow["{F(\varphi\cdot -)}"', from=1-1, to=2-1]
	\arrow["{U_\phi}", from=1-3, to=1-4]
	\arrow["{(\varphi\circ \phi^{-1})\cdot -}"', from=1-3, to=2-3]
	\arrow["{\varphi\cdot -}", from=1-4, to=2-4]
	\arrow["\Phi"', from=2-1, to=2-3]
	\arrow["{U_\phi}"', from=2-3, to=2-4]
\end{tikzcd}\]
commutes. Indeed, the left diagram commutes by the fact that the automorphism \eqref{eq: VNAutomorphism} is given by precomposition by $\phi^{-1}$, and the right diagram commutes trivially. Hence, the isomorphism $\pi_\mu:= U_\phi\circ \Phi$ is natural with respect to $L^\infty(G) = \End(\mu)$.  We denote $\mu_\phi:=\int^\oplus_{x\in G}\delta_{\phi(x)}\mathrm{d}\mu(x)$. We define unitary operators $L,R:L^2(G)\to L^2(G)$ by
\[\begin{tikzcd}
	{F(\mu)\otimes F(\tau)} && {F(\mu\otimes\tau)} & {F(\tau)\otimes F(\mu)} && {F(\tau\otimes\mu)} \\
	{\mu\otimes\tau} && {L^2(G)\cdot F(\tau)} & {\tau\otimes\mu} && {L^2(G)\cdot F(\tau)} \\
	{L^2(G)\cdot \tau} && {L^2(G)\cdot \tau} & {L^2(G)\cdot \tau} && {L^2(G)\cdot \tau}.
	\arrow["{s_{\mu, \tau}}", from=1-1, to=1-3]
	\arrow["{\pi_\mu\otimes \pi_\tau}"', from=1-1, to=2-1]
	\arrow["{=}", from=1-3, to=2-3]
	\arrow["{s_{\tau, \mu}}", from=1-4, to=1-6]
	\arrow["{\pi_\tau\otimes \pi_\mu}"', from=1-4, to=2-4]
	\arrow["{=}", from=1-6, to=2-6]
	\arrow["{=}"', from=2-1, to=3-1]
	\arrow["{\pi_\tau}", from=2-3, to=3-3]
	\arrow["{=}"', from=2-4, to=3-4]
	\arrow["{\pi_\tau}", from=2-6, to=3-6]
	\arrow["L"', from=3-1, to=3-3]
	\arrow["R"', from=3-4, to=3-6]
\end{tikzcd}\]
Since all the morphisms involved are natural with respect to $\End(\mu)$, we have that $L$ and $R$ are given by multiplication by some functions $l,r\in L^\infty(G, U(1))$ respectively. Then, the condition of the tensorator $s$ on $(\mu,\tau,\mu)$ implies that, for all $f\in L^2(G\times G)$,
\[
l(x)r(y)\chi'\big(\phi(x),\phi(y)\big)f(x,y) = l(x)r(y)\chi(x,y) f(x,y)
\]
as functions in $L^2(G\times G)$. Therefore, $\chi' \big(\phi(x),\phi(y)\big) = \chi(x,y)$, for almost every $(x,y)\in G\times G$. We can now show that $\phi: G\to G$ is a continuous group automorphism. We have that, for almost every $(x_1,x_2,y)\in G\times G\times G$,
\begin{align*}
\chi'(\phi(x_1+x_2), \phi(y)) &= \chi(x_1+x_2, y) \\&= \chi(x_1, y)\cdot \chi(x_2, y)\\& = \chi'(\phi(x_1), \phi(y))\cdot \chi'(\phi(x_2), \phi(y))\\
& = \chi'(\phi(x_1)+\phi(x_2), \phi(y)).
\end{align*}
Therefore, for almost every $z \in G$, it holds that 
\begin{equation}\label{eq: middle2}
\chi'(\phi(x_1+x_2), z) = \chi'(\phi(x_1) + \phi(x_2), z)
\end{equation}
for almost every $(x_1,x_2)\in G\times G$. Since both sides of Equation \eqref{eq: middle2} are continuous with respect to $z$ and the Haar measure has full support, Equation \eqref{eq: middle2} holds for all $z\in G$. Since $\chi'$ is continuous and nondegenerate, $\phi$ is a measurable group homomorphism almost everywhere. Therefore, by \cite[Cor. 5.3]{ramsay}, it can be represented by a genuine measurable homomorphism, and by \cite{sasvari}, it is continuous. Applying the same arguments to $\phi^{-1}$ we show that $\phi$ is a continuous group automorphism. Therefore, we have
\[
\chi'\circ(\phi\times\phi) = \chi
\]
as continuous functions on $G\times G$. Note that $\phi$ being a group homomorphism implies that $(\phi^{-1})_*\mu$ is a Haar measure, hence a multiple of $\mu$, and $r_\phi$ is essentially constant. The diagram for $s$ on $(\tau,\mu,\tau)$ implies that 
\[
\chi'(x,y)\cdot l(x) =\chi(\phi^{-1}(x), \phi^{-1}(y)) \cdot r(x)
\]
for almost every $(x,y)\in G\times G$, implying that $l = r$ as functions in $L^\infty(G)$. Let us write $\mathcal{F}_\chi:= \mathcal{F}\circ J_\chi$ and $\mathcal{F}_{\chi'}:=\mathcal{F}\circ J_{\chi'}$ for the unitary Fourier transforms $L^2(G)\to L^2(G)$ induced by the bicharacters $\chi,\chi'$. Then, the diagram for $(\tau,\tau,\tau)$ implies that
\[
\xi'\cdot U_\phi\circ \mathcal{F}_{\chi'} = \xi\cdot \mathcal{F}_{\chi}\circ U_\phi.
\]
We claim that $U_\phi\circ \mathcal{F}_{\chi'} = U_\phi\circ \mathcal{F}_{\chi\circ (\phi^{-1}\times\phi^{-1})} = \mathcal{F}_{\chi}\circ U_\phi$. Indeed, applied to a compactly supported continuous function $f\in C_c(G)$, we have
\[
\mathcal{F}\circ J_\chi(f)(x) = \sqrt{c_\chi}\int_{\eta\in \hat{G}}\eta(-x) f(j_{{\chi}}^{-1}(\xi))\mathrm{d}\hat{\mu}(\eta) = \frac{1}{\sqrt{c_\chi}}\int_{y\in G}\chi(-x,y)f(y)\mathrm{d}\mu(y),
\]
and similarly for $\mathcal{F}\circ J_{\chi'}(f)(x) = \frac{1}{\sqrt{c_{\chi'}}}\int_{y\in G}\chi'(-x,y)f(y)\mathrm{d}\mu(y)$. Hence, we find
\[
U_\phi\circ \mathcal{F}_{\chi'}(f)(x) = \sqrt{\frac{r_\phi}{c_{\chi'}}}\int_{y\in G}\chi'(-\phi(x), y)f(y)\mathrm{d}\mu(y),
\]
and
\begin{align*}
\mathcal{F}_{\chi}\circ U_\phi(f)(x) &= \sqrt{\frac{r_\phi}{c_{\chi}}}\int_{y\in G}\chi(-x, y)f(\phi(y))\mathrm{d}\mu(y)\\
& = \sqrt{\frac{r_\phi}{c_{\chi}}}\int_{y\in G}\chi'(-\phi(x), \phi(y))f(\phi(y))\mathrm{d}\mu(y)\\& = \sqrt{\frac{1}{c_\chi r_\phi}}\int_{y\in G}\chi'(-\phi(x), y)f(y)\mathrm{d}\mu(y).
\end{align*}
Since $c_{\chi'}/c_\chi = r_\phi^2$ and $C_c(G)$ is dense in $L^2(G)$, we conclude that $\xi = \xi'$, as needed.
\end{proof}

\subsection{Classification of continuous Tambara-Yamagami tensor categories}

Relying on the proof of Theorem \ref{thm: modAut(G)}, we classify continuous Tambara-Yamagami tensor categories. Let $G$ be a locally compact abelian group. Given a continuous Tambara-Yamagami category for $G$ $(A:=C_0(G)\oplus\CC, \TYcal_G, \alpha)$, we obtain a Tambara-Yamagami $\mathrm{W}^*$-tensor category $\mathfrak{F}(A, \TYcal_G, \alpha)$, where all the coordinates can be taken to be identities. In the previous section, we have constructed a sequence of changes of these coordinates which provide a continuous symmetric nondegenerate bicharacter $\chi$ on $G$ and a sign $\xi\in\{\pm1\}$ as well as, as discussed in the proof of Theorem \ref{thm: modAut(G)}, an equivalence of $\mathrm{W}^*$-categories between $\mathfrak{F}(A, \TYcal_G, \alpha)$ and the $\mathrm{W}^*$-tensor category $\TYcal(G, \chi, \xi)$, which is the image of the continuous tensor category $\TYcal(G,\chi, \xi)$ under $\mathfrak{F}$. In this section we shall prove that, actually, this equivalence of $\mathrm{W}^*$-tensor categories comes from an equivalence of continuous tensor categories between $(A, \TYcal_G, \alpha)$ and $\TYcal(G,\chi, \xi).$ To obtain this, it is enough to show that the changes of coordinates described in the previous section come from intertwiners between the relevant $\mathrm{C}^*$-correspondences. Recall that these changes of coordinates are given by multiplication by some functions $\theta, \varphi, \psi$, and $\omega$. These functions are defined using the form of the associators in the original coordinates, that is the original functions $a, a_i$, and $b_i$. Hence, we first need to understand the functions $a, a_i$, and $b_i$ for the particular case when the Tambara-Yamagami $\mathrm{W}^*$-tensor category comes from a continuous Tambara-Yamagami tensor category and the coordinates are taken to be identities.

Let $A:=C_0(G)\oplus\CC$ and $\TYcal_G$ be the $A-A\otimes A$-correspondence defined in Section \ref{sec: ctsTYcats}. Let $\alpha$ be an associator for $(A, \TYcal_G)$, that is, a unitary intertwiner 
\[
\TYcal_G\otimes_{A\otimes A}\big(\TYcal_G\otimes A\big)\xrightarrow{\cong} \TYcal_G\otimes_{A\otimes A}\big(A\otimes \TYcal_G \big)
\]
as $A-A^{\otimes 3}$-correspondences. By definition of $\TYcal_G$, both of the correspondences above decompose, as $A^{\otimes 3}$-Hilbert modules, as a direct sum of eight pieces, one for each ordered triple $(X,Y,Z)$ for $X,Y,Z\in \{C_0(G), \CC\}$, as in Section \ref{sec: ctsTYcats}. Hence, the data of $\alpha$ is the data of eight unitary intertwiners. For example, for $(C_0(G), C_0(G), C_0(G))$, we obtain a unitary intertwiner
\[
\alpha_0: C_0(G^3)\to C_0(G^3)
\]
as $C_0(G)-C_0(G^3)$-correspondences, where $C_0(G)$ acts by pullback along the group operation $m_{123}:G^3\to G$. Therefore, $\alpha_0$ is given by a unitary operator such that
\[
\alpha_0(\phi(x+y+z) f) = \phi(x+y+z) \alpha_0(f),\hspace{1cm} \alpha_0(ff') = \alpha_0(f)f'
\]
for $\phi\in C_0(G)$ and $f,f'\in C_0(G^3)$, that is, $\alpha_0$ is given by multiplication by a continuous function
\[
\mathrm{a}: G^3\to U(1).
\] 
Using the same arguments, we characterize all the pieces of the associator $\alpha$:
\begin{enumerate}
    \item for $(C_0(G), C_0(G), C_0(G))$ the associator is given by multiplication by some $\mathrm{a}: G^3\to U(1)$ continuous,
    \item for $(\CC, C_0(G), C_0(G))$ and $( C_0(G), C_0(G), \CC)$ the corresponding pieces are given by multiplication by some $\mathrm{a}_1: G^2\to U(1)$ and $\mathrm{a}_3: G^2\to U(1)$ continuous, respectively,
    \item for $(C_0(G), \CC, C_0(G))$, the associator is given by multiplication by some $\mathrm{a}_2: G^2\to U(1)$ continuous,
\item for $(C_0(G), \CC, \CC)$, the associator is a unitary $\beta_1: C_0(G)\otimes_\varepsilon L^2(G) \to  C_0(G)\otimes_\varepsilon L^2(G) $ such that 
$$\beta_1(\phi(y)f) = \phi(x+y)\beta_1(f), \hspace{1cm}\beta_1(f\rho(x)) = \beta_1(f)\rho(x)$$ for all $\phi, \rho\in C_0(G)$ and $f\in C_0(G)\otimes_\varepsilon L^2(G)\cong C_0(G, L^2(G)).$

\item for $(\CC, \CC, C_0(G))$, the associator is a unitary $\beta_3: C_0(G)\otimes_\varepsilon L^2(G)\to C_0(G)\otimes_\varepsilon L^2(G)$ such that $$\beta_3(\phi(x+y)f) = \phi(y)\beta_3(f), \hspace{1cm}\beta_3(f\rho(x)) = \beta_3(f)\rho(x)$$ for all $\phi, \rho\in C_0(G)$ and $f\in C_0(G)\otimes_\varepsilon L^2(G).$

\item for $(\CC, C_0(G), \CC)$,  the associator $C_0(G)\otimes_\varepsilon L^2(G)\to C_0(G)\otimes_\varepsilon L^2(G)$ is given by a strongly continuous field $x\in G\mapsto \mathrm{b}_{2}(x)\in U(L^2(G))$, where each $ \mathrm{b}_{2}(x)$ is given by multiplication by an $L^\infty(G, U(1))$ function also denoted $\mathrm{b}_{2}(x)\in L^\infty(G, U(1))$. We use that the commutant of the image of $C_0(G)$ in $B(L^2(G))$ is $L^\infty(G)$. 

\item for $(\CC, \CC, \CC)$, the associator is a unitary $\gamma: L^2(G)\to L^2(G)$.
\end{enumerate}

We can express $\beta_1$ and $\beta_3$ also in terms of functions as follows. Let $\sigma$ be the translation operator
\[
\begin{array}{cccccc}
\sigma :& C_0(G)\otimes_\varepsilon L^2(G)&\cong C_0(G, L^2(G))&\to &C_0(G, L^2(G))\cong&C_0(G)\otimes_\varepsilon L^2(G)\\
&&f(x,y)&\mapsto & f(x,x+y)&.
\end{array}
\]
Note that $\sigma^{-1}\circ \beta_1$ is given by a strongly continuous field $x\in G\mapsto \mathrm{b}_1(x)\in U(L^2(G))$, so that each $\mathrm{b}_1(x)$ is obtained by multiplication by a function also denoted $\mathrm{b}_1(x)\in L^\infty(G, U(1))$. Similarly, $\beta_3\circ \sigma$ is given by a strongly continuous field $x\in G\mapsto \mathrm{b}_3(x)\in U(L^2(G))$, with each $\mathrm{b}_3(x)$ induced by multiplication by a function $\mathrm{b}_3(x)\in L^\infty(G, U(1))$. The argument above allows us to describe the form of the functions $a, a_i, b_i$ introduced in Lemma \ref{lemm: CharacterizationCommuting} when the Tambara-Yamagami $\mathrm{W}^*$-tensor category is the image of the continuous tensor category $(A, \TYcal_G, \alpha)$ under $\Ffrak$, and the coordinates have been taken to be identities. Let $\int^\oplus_{t\in T}\delta_{p(t)}\mathrm{d}\upsilon(t),\, \int^\oplus_{s\in S}\delta_{n(s)}\mathrm{d}\nu(s),\, \int^\oplus_{r\in R}\delta_{z(r)}\mathrm{d}\eta(r)\in \Rep(C_0(G))$. Then,
\begin{align}
a(\upsilon, \nu, \eta)(t,s,r)&=\mathrm{a}(p(t), n(s), z(r)) \nonumber\\
a_i(\upsilon, \nu)(t,s) &= \mathrm{a}_i(p(t), n(s))\nonumber\\
b_i(\upsilon)(t, x) &= \mathrm{b}_i(p(t), x).\nonumber
\end{align}

We can produce the main classification of Tambara-Yamagami continuous tensor categories.

\begin{theorem}\label{thm: finalTY2}Let $G$ be a locally compact abelian group. Then, there is a commutative diagram of bijections   
\[\hspace{-.8cm}\begin{tikzcd}[column sep=small]
    \begin{array}{c}\nicefrac{\left\{\begin{array}{l}(\chi, \xi)\ |\ \text{$\chi\phantom{,}$: $G\times G\to U(1)$ a continuous}\\     \text{symmetric nondegenerate bicharacter} \\ \text{and $\xi\in\{\pm1\}$}   \end{array}\right\}}{\text{Aut}(G)}\end{array}
    &&
    \begin{array}{c} \nicefrac{\left\{\begin{array}{l}\text{Tambara-Yamagami }\mathrm{W}^*\text{-}\\     \text{tensor categories for $G$}   \end{array}\right\}}{\mathrm{W}^*-\otimes\ \text{equiv.}} \end{array}
    \\
    \begin{array}{c} \nicefrac{\left\{\begin{array}{l}\text{continuous Tambara-Yamagami}\\     \text{tensor categories for $G$}   \end{array}\right\}}{\text{continuous $\otimes$ equiv.}} \end{array}
	\arrow["{\TYcal(G, -,-)}", from=1-1, to=1-3]
	\arrow["{\TYcal(G, -, -)}"', from=1-1, to=2-1]
	\arrow["\Ffrak"', from=2-1, to=1-3]
\end{tikzcd}\]
\end{theorem}
\begin{proof}
Let $\chi: G\times G\to U(1)$ be a continuous symmetric nondegenerate bicharacter and $\xi\in\{\pm1\}$ be a sign. Let $\phi: G\to G$ be a continuous group automorphism and define $\chi':= \chi\circ (\phi^{-1}\times \phi^{-1})$. Then, the equivalence of $\mathrm{W}^*$-tensor categories $\TYcal(G, \chi, \xi)\xrightarrow{\cong}\TYcal(G, \chi',\xi)$ produced in the proof of Theorem \ref{thm: modAut(G)} clearly comes from an equivalence of continuous tensor categories. Therefore, the vertical arrow is well defined.

The diagonal arrow is well defined by Proposition \ref{Prop: ForgetE1} and taking identities as the isomorphisms needed in the definition of Tambara-Yamagami $\mathrm{W}^*$-tensor categories. The diagram commutes by definition, and the horizontal arrow is an isomorphism by Theorem \ref{thm: modAut(G)}. Hence the vertical arrow is injective. 

It remains only to show that the vertical arrow is surjective. To do so, we show that any continuous Tambara-Yamagami tensor category for $G$ is equivalent to some $\TYcal(G, \chi, \xi)$. 

Let $(A:=C_0(G)\oplus\CC, \TYcal_G,\alpha)$ be a continuous Tambara-Yamagami category. We know that there exists a symmetric nondegenerate bicharacter $\chi$ on $G$ and a sign $\xi\in\{\pm 1\}$ such that $\Ffrak(A, \TYcal_G, \alpha)\cong \Ffrak(\TYcal(G, \chi, \xi))$ as $\mathrm{W}^*$-tensor categories. Hence, it is enough to show that the $\mathrm{W}^*$-tensor equivalence produced in the proof of Theorem \ref{thm: modAut(G)} actually comes from an equivalence of continuous Tambara-Yamagami tensor categories for $G$ between $(A, \TYcal_G,\alpha)$ and $\TYcal(G, \chi, \xi)$. Actually, it is enough to show that the changes of coordinates described in Section \ref{sec: Automaticcontinuity} come from unitary intertwiners between the relevant $\mathrm{C}^*$-correspondences. More explicitly, we need to argue that
\begin{enumerate}
    \item $\theta$ comes from an intertwiner $C_0(G\times G)\to C_0(G\times G)$ of $C_0(G)-C_0(G\times G)$-correspondences,
    \item $\omega$ comes from an intertwiner $L^2(G)\to L^2(G)$ of $C_0(G)$-representations,
    \item $\psi$ comes from an intertwiner $C_0(G)\to C_0(G)$ of $\CC - C_0(G)$-correspondences.
\end{enumerate}
There is no need to argue about $\varphi$ because we defined it to be the constant function 1.

In Proposition \ref{prop: FirstStep}, we defined $\theta(\upsilon, \nu) = a_3(\upsilon,\nu)$. This change of coordinates comes from the intertwiner $C_0(G\times G)\to C_0(G\times G)$ as $C_0(G)-C_0(G\times G)$-correspondences given by multiplication by the continuous function $\mathrm{a}_3$. Next, in Proposition \ref{prop: SecondStep}, we defined $\omega(x) : = B(x)$ for some function $B\in L^\infty(G, U(1))$. Hence, this comes from the intertwiner $L^2(G)\to L^2(G)$ between $C_0(G)-\CC$-correspondences given by multiplication by $B$. 

It is left to argue that $\psi(x) = P(\mu)(x)$ as defined in Theorem \ref{thm: ThirdStep} comes from a unitary intertwiner between the relevant correspondences. Given $x\in G$, we write $M_x\in U(L^2(G))$ for the unitary $f\mapsto \big(y\mapsto \chi(x,y)f(y)\big)$ for every $f\in L^2(G)$. Then, the map $x\in G\mapsto M_x\in U(L^2(G))$ is strongly continuous by dominated convergence. Lemma \ref{Lemm: Function P} provides a measurable function $P: G\to U(1)$ such that $\mathrm{b}_{2}(x) = P(x)M_x$ for almost every $x\in G$. Fix a unit vector $f\in L^2(G)$ and write $\mathcal{P}: G\to \mathbb{C}$ for the map $\mathcal{P}(x) = \langle M_x^*\mathrm{b}_2(x)f, f\rangle$, which is $U(1)$-valued almost everywhere. Since $x\mapsto M_x$ and $x\mapsto b_2(x)$ are strongly continuous, $\mathcal{P}$ is continuous, and it agrees with $P$ almost everywhere. Since $|\mathcal{P}| = 1$ almost everywhere and the Haar measure has full support, we have that $|\mathcal{P}| = 1$ everywhere. The change of coordinate $\psi$ is induced by multiplication by $\mathcal{P}$ as a map $C_0(G)\to C_0(G)$ of $\mathbb{C}-C_0(G)$-correspondences. Hence, the claim follows.
\end{proof}

\begin{remark}
    If the locally compact group $G$ is not isomorphic to its Pontryagin dual $\hat{G}$, there exist no nondegenerate bicharacters on $G$, and hence there exist no Tambara-Yamagami $\mathrm{W}^*$-tensor categories or continuous Tambara-Yamagami tensor categories for $G$.
\end{remark}

\appendix
\section{Technical proofs}
\renewcommand{\thesection}{A} 
\label{App A}

\setcounter{definition}{0}

We present in this appendix some of the technical results needed in the proofs of Section~\ref{sec: Automaticcontinuity}. Throughout the appendix, $G$ denotes a locally compact abelian group and $\mu$ its Haar measure. We use additive notation for the group operation on $G$. In addition, $K$ denotes a Hilbert space. Recall that all topological spaces are locally compact, paracompact, Hausdorff, and second countable, all measure spaces are standard measure spaces and all Hilbert spaces are separable. The following lemma is needed in the proof of Proposition \ref{prop: SecondStep}.

\begin{lemma}\label{lemm: SolutionToEquationL2}
 Let $\phi\in L^\infty(\mu\times\mu, U(K))$ be a function such that 
	\begin{equation}\label{eq: L2equation}
	\phi(x+y, z) = \phi(y,-x+z)\cdot  \phi(x,z)
	\end{equation}
	as functions in $L^\infty(\mu\times\mu\times \mu, U(K))$. Then, there is a function $\Phi\in L^\infty(\mu, U(K))$ such that
	\[
	\phi(x,y) = \Phi(-x+y)^{-1}\cdot \Phi(y).
	\]
\end{lemma}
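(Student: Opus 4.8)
The plan is to read the functional equation \eqref{eq: L2equation} as the statement that $x\mapsto \phi(x,-)$ is a $1$-cocycle for the translation action of $G$ on $U(K)$-valued functions of the second variable, and to write down an explicit primitive. Pointwise the algebra is a one-liner: fix a reference point $w\in G$ and set $\Phi(u):=\phi(u^{-1}w,w)^{-1}$. Applying \eqref{eq: L2equation} to the triple $(\zeta^{-1}w,\xi,w)$ in place of $(x,y,z)$, and using $(\zeta^{-1}w)^{-1}w=\zeta$, gives $\phi(\xi\zeta^{-1}w,w)=\phi(\xi,\zeta)\,\phi(\zeta^{-1}w,w)$, whence $\phi(\xi,\zeta)=\Phi(\xi^{-1}\zeta)^{-1}\Phi(\zeta)$ after rewriting $\xi\zeta^{-1}=(\xi^{-1}\zeta)^{-1}$. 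The only genuine difficulty is that \eqref{eq: L2equation} holds merely almost everywhere, so both "fixing $w$'' and "restricting $\phi$ to the null set $\{(\,\cdot\,,w)\}$'' must be justified measure-theoretically.

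First I would recast the almost-everywhere identity after a change of variables adapted to this substitution. Let $\Psi\colon G^3\to G^3$ be the bijection $\Psi(\xi,\zeta,w)=(\zeta^{-1}w,\xi,w)$, with inverse $(x,y,z)\mapsto(y,x^{-1}z,z)$. A short Fubini computation, using invariance of $\mu$ under translation and under inversion (both valid since an abelian group is unimodular), shows that $\Psi$ preserves $\mu\times\mu\times\mu$. Since \eqref{eq: L2equation} holds on a conull subset of $G^3$, precomposing with the measure-preserving map $\Psi$ shows that
\[
\phi(\xi\zeta^{-1}w,w)=\phi(\xi,\zeta)\,\phi(\zeta^{-1}w,w)\qquad\text{for a.e. }(\xi,\zeta,w)\in G^3,
\]
where we used $(\zeta^{-1}w)^{-1}w=\zeta$ as above.

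Next I would apply Fubini once more: as the exceptional set is $\mu^3$-null, for almost every $w\in G$ the displayed identity holds for $\mu\times\mu$-a.e. $(\xi,\zeta)$, and simultaneously, after choosing a jointly measurable representative of $\phi$, the slice $\phi(-,w)$ is a well-defined element of $L^\infty(\mu,U(K))$ (essential boundedness is automatic since the values lie in $U(K)$). Fixing one such good $w$ and setting $\Phi(u):=\phi(u^{-1}w,w)^{-1}$ defines an element of $L^\infty(\mu,U(K))$, because $u\mapsto u^{-1}w$ is a measure isomorphism of $G$ and the inverse of a unitary is unitary. Substituting $\phi(\xi\zeta^{-1}w,w)=\Phi(\xi^{-1}\zeta)^{-1}$ and $\phi(\zeta^{-1}w,w)=\Phi(\zeta)^{-1}$ into the displayed identity yields $\phi(\xi,\zeta)=\Phi(\xi^{-1}\zeta)^{-1}\Phi(\zeta)$ for a.e. $(\xi,\zeta)$, which is the claim upon relabelling $(\xi,\zeta)=(x,y)$.

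I expect the main obstacle to be exactly this measure-theoretic bookkeeping rather than any algebra. One cannot average the candidate primitives over $w$, since $U(K)$ is neither convex nor abelian, so a single good slice must be selected by Fubini; and the verification that $\Psi$ preserves null sets—so that the a.e. identity survives the substitution—is the point requiring care. Everything else reduces to the single application of the cocycle relation carried out in the first paragraph.
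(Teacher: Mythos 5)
Your proposal is correct and follows essentially the same route as the paper's proof: both define $\Phi(u)=\phi(u^{-1}w,w)^{-1}$ for a reference point $w$ selected by Fubini so that the a.e.\ cocycle identity holds on the relevant slice and the slice is measurable, and then verify the identity by the same one-line substitution. The only cosmetic difference is that you package the Fubini step as invariance of $\mu^{\times 3}$ under the substitution map $\Psi$, whereas the paper directly defines the conull sets $C$ and $D$ of good reference points.
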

\begin{proof}
Pick a representative of the class of $\phi\in L^\infty(G\times G, U(K))$, which we still denote by $\phi$. Then, there is a subset $E\subset G^3$ of full measure such that
\[
\phi(x+y, z) = \phi(y,-x+z)\cdot  \phi(x,z)
\]
if $(x,y,z)\in E$. Define the subsets of $G$
\begin{align*}
C&:=\{w \in G\ |\ \text{for almost every $(x,y)\in G\times G$, $(-y+w,x, w)\in E$} \}\\
D&:=\{w\in G\ |\ \text{the function $x\mapsto \phi(-x+w, w)$ is measurable}\}.
\end{align*}
Since $E$ is of full measure, $C$ is of full measure, and by Fubini's Theorem, $D$ is of full measure. Hence, $C\cap D$ is of full measure and, in particular, not empty. Let $w\in C\cap D$ and define 
\[
\Phi(x):= \phi(-x+w, w)^{-1}.
\]
Then, since $w\in D$, we obtain a function $\Phi\in L^\infty(G;U(K))$. Also, since $w\in C$, for almost every pair $(x,y)\in G^2$, we have $(-y+w,x,w)\in E$, which implies that 
\[
\Phi(-x+y)^{-1}\cdot \Phi(y)= \phi(x-y+w, w)\cdot \phi(-y+w, w)^{-1} = \phi(x,y).
\]

\end{proof}

The following lemma is needed to prove Theorem \ref{thm: HisC} and Theorem \ref{thm: ThirdStep}. 

\begin{lemma}\label{Lemm: Function P}
    Let $T$ be a topological space and $\upsilon$ a measure on $T$. Let $\phi\in L^\infty(T\times G, \upsilon\times\mu, U(K))$ and $\Pi\in L^\infty(G\times T, \mu\times\upsilon, U(1))$ be functions such that
    \[
    \phi(t,x)\cdot \Pi(y,t) = \phi(t,x+y) \hspace{1cm}\Pi(x,t)\cdot\Pi(y,t) = \Pi(x+y,t)
    \]
   almost everywhere. Then, there exists a unique function $P\in L^\infty(T, \upsilon, U(K))$ such that
    \[
    \phi(t,x) = \Pi(x,t) \cdot P(t).
    \]
\end{lemma}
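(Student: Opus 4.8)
The plan is to mimic the proof of Lemma~\ref{lemm: SolutionToEquationL2}: isolate a single ``good'' basepoint $w\in G$ and set $P(t):=\phi(t,w)\cdot\rho(w,t)^{-1}$. Reasoning first as if the two functional equations held everywhere, the heuristic is immediate. Substituting $x\mapsto w$ and $y\mapsto w^{-1}x$ into the first relation gives $\phi(t,w)\cdot\rho(w^{-1}x,t)=\phi(t,x)$, and the second relation, which says that $\rho$ is a homomorphism in its first variable, yields $\rho(w^{-1}x,t)=\rho(w^{-1},t)\cdot\rho(x,t)$. Since $\rho$ takes values in $U(1)$ and is therefore scalar and central in $U(K)$, these combine to
\[
\phi(t,x)=\rho(x,t)\cdot\big(\phi(t,w)\cdot\rho(w^{-1},t)\big),
\]
which is exactly the claimed identity with $P(t)=\phi(t,w)\cdot\rho(w,t)^{-1}$.

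The main obstacle, exactly as in Lemma~\ref{lemm: SolutionToEquationL2}, is that $\phi$ and $\rho$ are only defined up to null sets, so the substitution $x\mapsto w$ cannot be carried out at an arbitrary point. To make the argument rigorous I would fix representatives of $\phi$ and $\rho$, let $E\subset T\times G\times G$ and $E'\subset G\times G\times T$ be the full-measure sets on which the first and second relations respectively hold, and then isolate the basepoints $w$ whose associated slices are good:
\begin{align*}
C&:=\{w\in G\ :\ (t,w,w^{-1}x)\in E\text{ for almost all }(t,x)\in T\times G\},\\
C'&:=\{w\in G\ :\ (w^{-1},x,t)\in E'\text{ for almost all }(x,t)\in G\times T\},
\end{align*}
together with the sets $D$ and $D'$ of those $w$ for which $t\mapsto\phi(t,w)$ and $t\mapsto\rho(w^{-1},t)$ are measurable.

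To see that $C$ has full measure, observe that the shear $(t,w,x)\mapsto(t,w,w^{-1}x)$ preserves $\lambda\times\mu\times\mu$ by left-invariance of the Haar measure, so the preimage of $E$ under it is of full measure; applying Fubini with the $w$-coordinate singled out shows that almost every $w$-slice is of full measure, which is precisely the defining property of $C$. The same argument, now using that inversion $w\mapsto w^{-1}$ preserves $\mu$ on the abelian (hence unimodular) group $G$, shows that $C'$ is of full measure, and $D,D'$ are of full measure by Fubini. The intersection $C\cap C'\cap D\cap D'$ is therefore nonempty; choosing any $w$ in it, the function $P(t):=\phi(t,w)\cdot\rho(w,t)^{-1}$ is a well-defined element of $L^\infty(T,\lambda,U(K))$, and the pointwise computation above now holds for almost all $(t,x)$, yielding the desired equality in $L^\infty(T\times G,\lambda\times\mu,U(K))$. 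I expect the only delicate point to be the bookkeeping of which change of variables preserves which measure; once a good $w$ is in hand the algebraic step is instantaneous, precisely because $\rho$ is scalar-valued and commutes past the $U(K)$-valued factor $\phi$.
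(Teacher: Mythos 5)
Your proposal is correct and follows essentially the same route as the paper's proof: fix representatives, use Fubini to find a full-measure set of good basepoints $w$, set $P(t)=\rho(w^{-1},t)\cdot\phi(t,w)$ (which is your $\phi(t,w)\cdot\rho(w,t)^{-1}$, since $\rho(e,t)=1$ a.e.), and conclude by the two functional equations and centrality of the scalar $\rho$. The only difference is cosmetic — you track a separate full-measure set $E'$ for the homomorphism property of $\rho$, which the paper uses implicitly.
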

\begin{proof}
We take representatives of $\phi$ and $\Pi$, which we continue to denote $\phi$ and $\Pi$. Define, for all $t\in T$ and $x\in G$, the unitary $p(t,x) = \phi(t,x)\cdot \Pi(x,t)^{-1}$. Since $\Pi$ is scalar-valued, we find that for almost every $(x,y,t)\in G\times G\times T$,
\begin{align*}
    p(t,x+y) & = \phi(t,x+y)\cdot \Pi(x+y,t)^{-1}\\ & = \phi(t, x+y)\cdot \Pi(x,t)^{-1}\cdot \Pi(y, t)^{-1}\\ & = \phi(t, x)\cdot \Pi(x, t)^{-1}\\ & = p(t,x),
\end{align*}
implying that $p$ is essentially constant in the second variable. This gives a unique measurable function $P: T\to U(K)$ satisfying the required condition. 
\end{proof}

The next two results finish the proof of Theorem \ref{thm: ThirdStep}. Given $y\in G$, let $\sigma_y:L^2(G)\to L^2(G)$ denote the operator given by $\sigma_y(f)(x) = f(x+y)$.

\begin{lemma}\label{Lemm: nondegenerate}
 Let $\chi: G\times G\to U(1)$ be a continuous symmetric bicharacter on $G$. Assume that there exists a unitary operator $\gamma: L^2(G)\to L^2(G)$ such that, for every function $f\in L^2(G)$ and every $y\in G$, it holds that
 \[
 \gamma\Big(\chi(x,y)\cdot \gamma(f)\Big)(x) = \sigma_y(f)(-x).
 \]
 Then, the homomorphism $G\to \hat{G}$ given by $y\mapsto \chi(-,y)$ is injective.
\end{lemma}
\begin{proof}
Let $y\in G$ be such that
	\[
	\chi(x,y) = 1
	\]
for all $x\in G$. Denote by $e\in G$ the identity element. Then, for all $f\in L^2(G)$,
    \[
    \sigma_y(f)(-x)  = \gamma^2(f)(x)= \sigma_{e}(f)(-x)
    \]
 and hence $\sigma_y = \sigma_{e}$. It follows that $y = e$. 
\end{proof}

To conclude the proof of Theorem \ref{thm: ThirdStep}, we  need the following result, which characterizes the operator $\gamma$. Let us recall the following notation first. Given a continuous symmetric bicharacter $\chi$ we write $j_\chi: G\to \hat{G}$ for the map $j_\chi(y)(x) = \chi(x,y)$. Recall that we call $\chi$ nondegenerate if $j_\chi :G\to\hat{G}$ is an isomorphism. In this situation, we write $c_\chi\in\R_{>0}$ for the unique positive number such that $(j_\chi)_*\mu = c_\chi \hat{\mu}$, and we define $J_\chi: L^2(G)\to L^2(\hat{G})$ as the unitary $J_\chi(f)(\eta) = \sqrt{c_\chi}\cdot f\big(j_\chi^{-1}(\eta)\big)$. Recall that we write $\Fcal: L^2(\hat{G})\xrightarrow{\cong} L^2(G)$ for the unitary Fourier transform. We write
\[
    \Phi: L^2(G)\xrightarrow{J_\chi}  L^2(\hat{G}) \xrightarrow{\mathcal{F}}L^2(G)
\]
for the unitary defined as the composition of $\mathcal{F}$ and $J_\chi$ whenever $\chi$ is nondegenerate.

\begin{theorem}\label{thm: Schur}
	Let $G$ be a locally compact abelian group and let $\chi: G\times G\to U(1)$ be a continuous symmetric bicharacter such that the map $j_\chi:G\to \hat{G}$ given by $j_\chi(y)(x) = \chi(x,y)$ is injective. Let $\gamma:L^2(G)\to L^2(G)$ be a unitary operator such that, for almost every $x\in G$ and every $g\in L^2(G)$,
	\begin{equation}\label{eq: ConditionsGamma}
	\gamma\big(\chi(x,y)g(y)\big) = \gamma(g)(-x+y)\hspace{1cm}  \gamma\big(g(x+y)\big) = \chi(x,y)\gamma(g).
	\end{equation}
    Then, $\chi$ is nondegenerate and $\gamma: L^2(G)\to L^2(G)$ is given by
    \[
\gamma = \xi\cdot \Phi = \xi\cdot (\mathcal{F}\circ J_\chi)
    \]
	for some $\xi\in U(1).$
\end{theorem}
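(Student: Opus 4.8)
The plan is to read the two identities in \eqref{eq: ConditionsGamma} as intertwining relations and then run an irreducibility (Schur-type) argument, which is what the theorem's name refers to. Write $L_x$ for the translation operator $(L_xf)(y)=f(x^{-1}y)$ and, for bounded continuous $\phi$, write $M_\phi$ for multiplication by $\phi$ on $L^2(G)$. Unwinding the meaning of $\id\otimes\gamma$, the first identity in \eqref{eq: ConditionsGamma} says $\gamma\,M_{\chi(x,-)}=L_x\,\gamma$ and the second says $\gamma\,L_x=M_{\chi(x,-)^{-1}}\,\gamma$, that is
\[
\gamma\,M_{\chi(x,-)}\,\gamma^*=L_x,\qquad \gamma\,L_x\,\gamma^*=M_{\chi(x,-)^{-1}},\qquad x\in G.
\]
Since $\chi$ is a bicharacter, $x\mapsto M_{\chi(x,-)}$ is a strongly continuous unitary representation of $G$ on $L^2(G)$, and the first relation exhibits $\gamma$ as a unitary equivalence between it and the regular representation $x\mapsto L_x$.

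First I would deduce nondegeneracy by comparing spectral measures. By the \textsc{snag} theorem the representation $x\mapsto M_{\chi(x,-)}$ is diagonalised by the position variable: at the point $y\in G$ it acts by the character $z\mapsto\chi(z,y)$, which is $j(y):=\chi(y,-)\in\hat G$, so its spectral measure is the pushforward $j_*\mu$ of Haar measure along $j\colon G\to\hat G$, and this is concentrated on $\chi(G)$. By Plancherel the regular representation has spectral measure the Haar measure $\hat\mu$ of $\hat G$, which has full support. A unitary equivalence forces these measure classes to coincide, so $\hat\mu\big(\hat G\setminus\chi(G)\big)=0$; as $\chi(G)$ is a measurable subgroup of full Haar measure it must equal $\hat G$. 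Since $j$ is then a continuous bijective homomorphism between second countable locally compact groups, the open mapping theorem makes it an isomorphism, so $\chi$ is nondegenerate, $\chi(G)=\hat G$, and in particular the operator $\Phi=\mathcal{F}\circ J$ (with $J\colon L^2(G)\xrightarrow{\cong}L^2(\hat G)$ induced by $j$, and $\mathcal{F}$ the Fourier transform of $\hat G$) is a genuine unitary.

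Next I would establish irreducibility and identify $\gamma$. Because $\chi$ induces an injection $G\hookrightarrow\hat G$ (the hypothesis, equivalently Lemma \ref{Lemm: Non-degenerate}), the characters $\{\chi(x,-)\}_{x\in G}$ separate the points of $G$; hence they generate the full Borel $\sigma$-algebra and $\{M_{\chi(x,-)}\}''=L^\infty(G)$, a maximal abelian subalgebra of $B(L^2(G))$. On the other hand $\{L_x\}''=\mathcal{F}^*L^\infty(\hat G)\mathcal{F}$ is the algebra of translation-invariant operators, also maximal abelian; an operator lying in both is a multiplication operator commuting with every translation, hence constant, so the two algebras meet only in $\mathbb{C}\,\id$ and the representation generated by $\{L_x,M_{\chi(x,-)}\}$ is irreducible. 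Using Proposition \ref{prop: PropertiesF} together with the symmetry of $\chi$ and the identity $\chi(z,y)=j(y)(z)$, one checks that $\Phi$ satisfies the very same relations $\Phi\,M_{\chi(x,-)}\,\Phi^*=L_x$ and $\Phi\,L_x\,\Phi^*=M_{\chi(x,-)^{-1}}$ (the modulation-to-translation and translation-to-modulation laws of the Fourier transform become exactly these). Then $\Phi^*\gamma$ commutes with all $L_x$ and all $M_{\chi(x,-)}$, so by irreducibility it lies in $\mathbb{C}\,\id$; being a product of unitaries it equals $\xi\,\id$ for some $\xi\in U(1)$, whence $\gamma=\xi\,\Phi$.

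The main obstacle is the analytic bookkeeping forced by the non-discrete setting: making the spectral-measure comparison rigorous through the \textsc{snag} theorem, and justifying $\{M_{\chi(x,-)}\}''=L^\infty(G)$ together with the triviality of the intersection of the two maximal abelian algebras without recourse to any pointwise argument. In particular one must control the possibility that $\chi(G)$ is a dense but proper, measure-zero subgroup of $\hat G$ — precisely the configuration that the existence of the unitary intertwiner $\gamma$, via equivalence of spectral measures, rules out.
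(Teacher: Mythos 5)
Your proof is correct, but it is organized differently from the paper's. The paper packages the two relations as equivariance under a Heisenberg group $HG=\{u\sigma_x\chi(y,-)\}\subset B\bigl(L^2(G)\bigr)$, cites the irreducibility of $L^2(G)$ as an $HG$-representation from the literature, applies Schur's Lemma to the intertwiner $\Phi\circ\gamma^{-1}$ to get $\gamma=\xi\Phi$ \emph{first}, and only then deduces $\chi(G)=\hat{G}$ from the resulting surjectivity of $L^2(\chi(G))\hookrightarrow L^2(\hat{G})$ (density of $\chi(G)$, then openness, then equality). You reverse the order and make both halves self-contained: nondegeneracy comes first, from the fact that the unitary equivalence $\gamma M_{\chi(x,-)}\gamma^*=L_x$ conjugates the SNAG projection-valued measure $E\mapsto M_{1_{j^{-1}(E)}}$ (concentrated on $\chi(G)$) onto the Plancherel one (equivalent to Haar on $\hat{G}$), forcing $\chi(G)$ to be a measurable subgroup of full Haar measure and hence all of $\hat{G}$; irreducibility is then proved directly by the two-masa argument ($\{M_{\chi(x,-)}\}''=L^\infty(G)$ from point-separation plus second countability, $\{L_x\}''=\mathcal{F}^*L^\infty(\hat{G})\mathcal{F}$, trivial intersection by ergodicity of translation), after which Schur gives $\Phi^*\gamma\in U(1)\cdot\id$. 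What your route buys is independence from the external irreducibility reference and an argument for $\chi(G)=\hat{G}$ that does not pass through the density-then-openness step; what the paper's route buys is brevity and the conceptual identification with the Stone--von~Neumann picture. The only points you should make explicit when writing this up are (i) the Lusin--Souslin fact that $j(G)$ is Borel in $\hat{G}$ (needed to speak of its Haar measure) and the Steinhaus-type step that a measurable subgroup of positive Haar measure is open, and (ii) the verification, via Proposition~\ref{prop: PropertiesF} and the symmetry of $\chi$, that $\Phi$ satisfies the same two intertwining relations as $\gamma$ (including the normalization making $\Phi$ unitary); both are routine and consistent with the paper's standing second-countability assumptions.
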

\begin{proof}
    We first prove that $\chi$ is nondegenerate. For any $x\in G$, we define the following operators on $L^2(G)$
    \[
    \begin{array}{cccc}
        \sigma_x: & L^2(G)&\to& L^2(G) \\
         & f&\mapsto &\big(y\mapsto f(x+y)\big)
    \end{array}
    \hspace{1cm}
        \begin{array}{cccc}
        M_x: & L^2(G)&\to& L^2(G) \\
         & f&\mapsto &\big(y\mapsto \chi(x,y)f(y)\big).
    \end{array}
    \]
 We also write $N_x:  L^2(\hat{G})\to L^2(\hat{G})$  for the operator sending $h\in L^2(\hat{G})$ to $N_x(h)(\eta) = \eta(x)h(\eta)$. Then Equations \eqref{eq: ConditionsGamma} read
    \begin{equation}\label{eq: ConditionsGammaCompact}
        \gamma\circ M_x = \sigma_{-x}\circ \gamma\hspace{2cm} \gamma\circ \sigma_x = M_x\circ \gamma
    \end{equation}
    for almost every $x\in G$. The maps $x\in G\mapsto \sigma_x\in B(L^2(G))$ and $x\in G\mapsto M_x\in B(L^2(G))$ are strongly continuous, the second fact following from continuity of $\chi$ and dominated convergence. By continuity, Equations \eqref{eq: ConditionsGammaCompact} hold for every $x\in G$.
    
    The second equation in \eqref{eq: ConditionsGammaCompact} implies that $\gamma^{-1}\circ M_x\circ \gamma = \sigma_x$, and hence we find
    \begin{equation}\label{eq: conjugate M to get N}
    \mathcal{F}\circ \gamma^{-1}\circ M_x\circ \gamma\circ \mathcal{F}^{-1} = \mathcal{F}\circ \sigma_x\circ \mathcal{F}^{-1} = N_x,
    \end{equation}
      where we also use the time-shifting property of the Fourier transform. We may decompose the operators $M_x$ and $N_x$ over $\hat{G}$ as follows. We define the projection-valued measure $E_\chi$ on $\hat{G}$ by sending any Borel subset $B\subset \hat{G}$ to $E_\chi(B) = \text{Mult}_{\pmb{1}_{j_\chi^{-1}(B)}}\in B(L^2(G))$, the operator given by multiplication by the indicator function of ${j_\chi^{-1}(B)}$. Similarly, we write $E_0(B) = \text{Mult}_{\pmb{1}_B}\in B(L^2(\hat{G}))$, the operator given by multiplication by the indicator function of $B$, which defines another projection-valued measure on $\hat{G}$. Then we find
    \[
    M_x = \int_{\eta\in \hat{G}} \eta(x) \mathrm{d}E_\chi(\eta)\hspace{2cm} N_x = \int_{\eta\in \hat{G}} \eta(x)\mathrm{d}E_0(\eta),
    \]
    which, together with Equation \eqref{eq: conjugate M to get N}, implies that $E_\chi$ and $E_0$ have the same null sets, by uniqueness of the spectral measure associated to a strongly continuous unitary representation of $G$. Therefore, for every Borel subset $B\subset \hat{G}$,
    \[
    \hat{\mu}(B) = 0\iff E_0(B) = 0\iff E_\chi(B) = 0\iff \mu(j_\chi^{-1}(B)) = 0\iff (j_\chi)_*\mu(B) = 0,
    \]
 and $(j_\chi)_*\mu$ and $\hat{\mu}$ are equivalent measures. Now, the subgroup $j_\chi(G)\subset \hat{G}$ is Borel, as $G$ is $\sigma$-compact, and since $(j_\chi)_*\mu$ is supported on $j_\chi(G)$ and $\hat{\mu}$ is equivalent to $(j_\chi)_*\mu$, it holds that $\hat{G}\setminus j_\chi(G)$ is null with respect to $\hat{\mu}$. In particular, $j_\chi(G)$ is a measurable subgroup of positive Haar measure, hence open by the Steinhaus-Weil Theorem. Moreover, 
    \begin{align*}
        j_\chi(G)^\perp:&=\{x\in G\ |\  \eta(x) = 1 \text{  for all $\eta\in j_\chi(G)$}\}\\ & = \{x\in G\ |\ \chi(x,y) = 1 \text{   for all $y\in G$}\}\\ & = \ker(j_\chi)\\
        & = \{e\},
    \end{align*}
    and by Pontryagin duality $j_\chi(G)\subset \hat{G}$ is dense. Since it is also open, we find that $j_\chi(G) =  \hat{G}$ and $j_\chi$ is a continuous bijective group homomorphism. By the open mapping theorem, since $G$ and $\hat{G}$ are locally compact groups, $j_\chi: G\to \hat{G}$ is an isomorphism, as needed.

 We now prove the second part of the statement. For convenience, we will work with $\gamma^{-1}:L^2(G)\to L^2(G)$. We can rewrite Equations \eqref{eq: ConditionsGamma} as
	\[
\gamma^{-1}\Big(g(-x+y)\Big) = \chi(x,y)\gamma^{-1}(g)(y)\hspace{1cm} \gamma^{-1}\Big(\chi(x,y)g(y)\Big) = \gamma^{-1}(g)(x+y).
	\]

    Since $y\in G\mapsto \chi(-,y)\in \hat{G}$ is injective by hypothesis, the subset $j_\chi(G)\subset \hat{G}$ is dense by Pontryagin duality. We will show that the composition $\Phi\circ \gamma^{-1}$ is a morphism between irreducible representations of the standard Heisenberg group $HG$ of $G$ defined by $\chi$. Let us first introduce $HG$. Recall that, for every $x\in G$, we have defined the operator $\sigma_x$ on $L^2(G)$~by
	\[
	\sigma_x(g)(z) = g(x+z).
	\]
	We define the standard Heisenberg group of $G$ by
	\[
	HG : = \{u\sigma_x \chi(y,-)\ |\ u\in U(1), x,y\in G\}\subset B\big(L^2(G)\big)
	\]
	as a subgroup of the unitary operators on $L^2(G)$. Note that 
	\[
	\big(u\sigma_x \chi(y,-)\big)\big(u'\sigma_{x'}\chi(y',-)\big) = \frac{uu'}{\chi(y,x')}\sigma_{x+x'}\chi(y+y', -).
	\]
	By definition, $HG$ acts on $L^2(G)$.
	We claim that the composition $\Phi\circ \gamma^{-1}$ is a morphism of $HG$-representations. Indeed, let $u\sigma_x \chi(y,-)\in HG$ and $g(z)\in L^2(G)$. Then,
	\begin{align*}
    \Phi\circ\gamma^{-1}\big(u\sigma_x \chi(y,-)g(z)\big) &= u\Phi\circ\gamma^{-1}\big(\chi(y,x+z)\cdot g(x+z)\big)\\&=u\Phi\Big(\chi(-x,z)\gamma^{-1}\big(\chi(y,z)\cdot g(z)\big)\Big)\\&=u\Phi\Big(\chi(-x,z)\cdot \gamma^{-1}\big(g\big)(y+z)\Big)\\ &= u \chi(y,x+z )\big[\Phi\circ\gamma^{-1}\big](g)(x+z),
	\end{align*}
	by hypothesis and Proposition \ref{prop: PropertiesF}. It is well known that $L^2(G)$ is an irreducible $HG$-representation, see for example \cite{Amritanshu11}. Hence, by Schur's Lemma \cite[13.1.4]{Dixmier}, the space of $HG$-equivariant bounded maps $L^2(G)\to L^2(G)$ is $\CC\cdot \id_{L^2(G)}$. Therefore, there is a scalar $\xi\in \CC$ such that
	\[
	\gamma(g) = \xi \cdot \Phi(g).
	\]
    As $\gamma$ and $\Phi$ are both unitary, $\xi\in U(1)$.
\end{proof}

\renewcommand{\thesection}{B} 
\section{Example. Tambara-Yamagami $\mathrm{W}^*$-tensor categories for $\mathbb{R}$}

We provide, as an example, the description of all Tambara-Yamagami $\mathrm{W}^*$-tensor categories for $\mathbb{R}$ under addition, up to $\mathrm{W}^*$-tensor automorphism. Any continuous symmetric nondegenerate bicharacter on $\R$ is of the form
\[
\begin{array}{cccc}
    \chi_a:&\R\times\R &\to &U(1)\\
    &(x,y)&\mapsto & e^{iaxy}
\end{array}
\]
for some $a\in \R\setminus\{0\}$. Up to the action of $\text{Aut}(\R) = \R^\times$, there are two continuous symmetric nondegenerate bicharacters on $\R$, namely
\[
\begin{array}{cccc}
    \chi_+:&\R\times\R &\to &U(1)\\
    &(x,y)&\mapsto & e^{ixy}
\end{array}
\hspace{2cm}
\begin{array}{cccc}
    \chi_-:&\R\times\R &\to &U(1)\\
    &(x,y)&\mapsto & e^{-ixy}.
\end{array}
\]
Fix $a\in \{\pm1\}$ and $\xi\in\{\pm1\}$. The underlying $\mathrm{W}^*$-category of $\TYcal(\R, \chi_a, \xi)$ is 
\[
\Rep(C_0(\R))\oplus\Hilb\cdot \tau.
\]
We write $\delta_x\in \Rep(C_0(\R))$ for the irreducible representation induced by $x\in\R$, which has underlying Hilbert space $\CC$ and $f\in C_0(\R)$ acts by multiplication by $f(x)$. More generally, given a continuous map $p:T\to \R$ and a measure $\upsilon$ on $T$, we produce the object $\int^\oplus_{t\in T}\delta_{p(t)}\mathrm{d}\upsilon(t)\in \Rep(C_0(\R))$, whose underlying Hilbert space is $L^2(T, \upsilon)$ and whose $C_0(\R)$-action is given, for $\phi\in C_0(\R)$ and $f\in L^2(T, \upsilon)$, by
\[
(\phi\cdot f)(t) := \phi(p(t))\cdot f(t).
\]
The full subcategory of $\Rep(C_0(\R))$ on objects of this type is equivalent to $\Rep(C_0(\R))$, by Proposition \ref{Prop: FullSubcat}, and hence we describe the tensor product and the associators of $\TYcal(\R, \chi_a ,\xi)$ on this subcategory. We write $\lambda$ for the Lebesgue (that is, Haar) measure on $\R$, and therefore we also write $\lambda$ for the regular representation of $\int^{\oplus}_{x\in\R}\delta_{x}\mathrm{d}\lambda(x)\in \Rep(C_0(\R))$. If $H$ is a Hilbert space, we denote by $H\cdot \upsilon$ the object of $\Rep(C_0(\R))$ whose underlying Hilbert space is the tensor product $H\otimes L^2(T, \upsilon)$ and such that $C_0(\R)$ acts by pointwise multiplication on the factor $L^2(T, \upsilon)$ after pulling back along $p:T\to \R$.

The tensor product in $\TYcal(\R, \chi_a, \xi)$ is given by
\[
\int^\oplus_{t\in T}\delta_{p(t)}\mathrm{d}\upsilon(t)\otimes \int^\oplus_{s\in S}\delta_{n(s)}\mathrm{d}\nu(s) = \int^{\oplus}_{(t,s)\in T\times S}\delta_{p(t)+n(s)}\mathrm{d}(\upsilon\times \nu)(t, s)
\]
and
\begin{align*}
 \upsilon\otimes\tau = L^2(T, \upsilon)\cdot \tau \hspace{2cm}\tau\otimes\upsilon = L^2(T, \upsilon)\cdot \tau\hspace{2cm} \tau\otimes\tau =\int^{\oplus}_{x\in \R}\delta_x\mathrm{d}\lambda(x) = \lambda,
\end{align*}
for objects $\int^\oplus_{t\in T}\delta_{p(t)}\mathrm{d}\upsilon(t),\, \int^\oplus_{s\in S}\delta_{n(s)}\mathrm{d}\nu(s)\in \Rep(C_0(\R))$.

Finally, we describe the associators of $\TYcal(\R, \chi_a, \xi)$. Let $\int^\oplus_{t\in T}\delta_{p(t)}\mathrm{d}\upsilon(t),\, \int^\oplus_{s\in S}\delta_{n(s)}\mathrm{d}\nu(s)$, $\int^{\oplus}_{r\in R}\delta_{z(r)}\mathrm{d}\eta(r)\in \Rep(C_0(\R))$. Then,
\[
\begin{array}{cccc}
		\alpha_{\upsilon, \nu, \eta}:& \upsilon\times\nu\times\eta&\xrightarrow{\id}& \upsilon\times\nu\times \eta\vspace{.3cm}\\
		\alpha_{\tau, \upsilon,\nu} = \alpha_{\upsilon, \nu, \tau} :& L^2(\upsilon\times \nu)\cdot \tau& \xrightarrow{\id} &L^2(\upsilon\times\nu )\cdot \tau\vspace{.3cm}\\
		\alpha_{\upsilon,\tau,\nu}  :& L^2(\upsilon\times \nu)\cdot \tau& \to &L^2(\upsilon\times\nu )\cdot \tau\\ 
		&f(t,s)&\mapsto& e^{i a p(t) n(s)}f(t,s)\vspace{.3cm}\\
\alpha_{\upsilon,\tau,\tau}:&L^2( \upsilon)\cdot \lambda& \to &\upsilon \times\lambda\\
		&f(t, x)&\mapsto& f\big(t, p(t)+x\big)\vspace{.3cm}\\
		\alpha_{\tau,\tau, \upsilon}:& \lambda\times\upsilon& \to &L^2(\upsilon)\cdot \lambda\\
		&f(x,t)&\mapsto& f\big(t,-p(t)+x\big)\vspace{.3cm}\\
\alpha_{\tau,\upsilon,\tau}:& L^2(\upsilon)\cdot \lambda &\to & L^2(\upsilon)\cdot \lambda\\
		&f(t,x)&\mapsto &e^{i a p(t)x} f(t,x)
	\end{array}
	\]
	and 
    \[
	\begin{array}{cccc}
	\alpha_{\tau,\tau,\tau} :& L^2(\R,\lambda)\cdot \tau&\to&L^2(\R,\lambda)\cdot \tau\\
    & f&\mapsto &\Big(x\mapsto  \xi\cdot\sqrt{\frac{1}{2\pi}}\cdot\int_{y\in \R} e^{-i a  xy}f(y)\text{d}\lambda(y)\Big).
	\end{array}
    \]
\bibliographystyle{halpha-abbrv}
\bibliography{TYbiblio}
\end{document}